\def\refer#1{~\ref{#1}}
\def\refeq#1{~(\ref{#1})}
\def\ccite#1{~\cite{#1}}
\def\suite#1#2#3{(#1_{#2})_{#2\in {#3}}}
\def\longformule#1#2{
\displaylines{ \qquad{#1} \hfill\cr \hfill {#2} \qquad\cr } }
\def\inte#1{
\displaystyle\mathop{#1\kern0pt}^\circ }
\def\sumetage#1#2{\sum_{\substack{{#1}\\{#2}}}}
\let\al=\alpha
\let\g=\gamma
\let\d=\delta
\let\e=\varepsilon
\let\ep=\varepsilon
\let\lam=\lambda
\let\s=\sigma
\let\D=\Delta
\let\wt=\widetilde
\let\wh=\widehat
\let\ov\overline
\def\cB{{\mathcal B}}
\def\cC{{\mathcal C}}
\def\cF{{\mathcal F}}
\def\cG{{\mathcal G}}
\def\cH{{\mathcal H}}
\def\cI{{\mathcal I}}
\def\cJ{{\mathcal J}}
\def\cK{{\mathcal K}}
\def\cL{{\mathcal L}}
\def\cO{{\mathcal O}}
\def\cR{{\mathcal R}}
\def\cS{{\mathcal S}}
\def\cT{{\mathcal T}}
\def\cU{{\mathcal U}}
\def\cV{{\mathcal V}}
\def\cW{{\mathcal W}}
\def\cX{{\mathcal X}}
\def\virgp{\raise 2pt\hbox{,}}
\def\cdotpv{\raise 2pt\hbox{;}}
\def\eqdefa{\buildrel\hbox{\footnotesize def}\over =}
\def\Id{\mathop{\rm Id}\nolimits}
\def\sgn{\mathop{\rm sgn}\nolimits}
\def\C{\mathop{\mathbb C\kern 0pt}\nolimits}
\def\DD{\mathop{\mathbb D\kern 0pt}\nolimits}
\def\EE{\mathop{{\mathbb E \kern 0pt}}\nolimits}
\def\K{\mathop{\mathbb K\kern 0pt}\nolimits}
\def\N{\mathop{\mathbb N\kern 0pt}\nolimits}
\def\Q{\mathop{\mathbb Q\kern 0pt}\nolimits}
\def\R{{\mathop{\mathbb R\kern 0pt}\nolimits}}
\def\SS{\mathop{\mathbb S\kern 0pt}\nolimits}
\def\ZZ{\mathop{\mathbb Z\kern 0pt}\nolimits}
\def\TT{\mathop{\mathbb T\kern 0pt}\nolimits}
\def\P{\mathop{\mathbb P\kern 0pt}\nolimits}
\def \H{{\mathop {\mathbb H\kern 0pt}\nolimits}}
\newcommand{\ds}{\displaystyle}
\newcommand{\Z}{{\ZZ}}
\newcommand{\beq}{\begin{equation}}
\newcommand{\eeq}{\end{equation}}
\newcommand{\ben}{\begin{eqnarray}}
\newcommand{\een}{\end{eqnarray}}
\newcommand{\beno}{\begin{eqnarray*}}
\newcommand{\eeno}{\end{eqnarray*}}
\newcommand{\bqs}{\begin{equation*}}
\newcommand{\eqs}{\end{equation*}}
\newcommand{\andf}{\quad\hbox{and}\quad}
\newcommand{\with}{\quad\hbox{with}\quad}
\def \cFH {\cF_\H}
\def\equivH#1 {\buildrel\hbox{\tiny {$#1$}}\over \equiv}
\def\simH#1 {\buildrel\hbox{\footnotesize {$#1$}}\over \sim}
\newtheorem{definition}{Definition}[section]
\newtheorem{theorem}{Theorem}[section]
\newtheorem{lemma}{Lemma}[section]
\newtheorem{remark}{Remark}[section]
\newtheorem{cor}{Corollary}[section]
\newtheorem{proposition}{Proposition}[section]
\numberwithin{equation}{section}
\begin{document}
\title[A frequency  space  for the Heisenberg group]
{A frequency space  for the Heisenberg group}
 \author[H. BAHOURI] {Hajer Bahouri} 
 \address[H. Bahouri]%
{LAMA, UMR 8050\\
Universit\'e Paris-Est Cr\'eteil, 94010 Cr\'eteil Cedex, FRANCE}
  \email{hajer.bahouri@math.cnrs.fr}
  \email{}    
 \author[J.-Y. CHEMIN]{Jean-Yves Chemin}
\address [J.-Y. Chemin]%
{Laboratoire J.-L. Lions, UMR 7598 \\
Universit\'e Pierre et Marie Curie, 75230 Paris Cedex 05, FRANCE }
\email{chemin@ann.jussieu.fr}
\author[R. DANCHIN]{Raphael Danchin }%
\address[R. Danchin] {LAMA, UMR 8050\\
Universit\'e Paris-Est Cr\'eteil, 94010 Cr\'eteil Cedex, FRANCE}
  \email{danchin@u-pec.fr}
 
\date{\today}

\begin{abstract} 
We here revisit  Fourier analysis on the Heisenberg group $\H^d.$
Whereas, according to  the standard definition, 
 the Fourier transform of an integrable function $f$ on $\H^d$
is  a one parameter family of bounded operators on $L^2(\R^d),$
we define (by taking  advantage of basic properties of Hermite functions)
the Fourier transform $\wh f_\H$ of $f$ to be a  uniformly continuous mapping  
 on the  set $\wt\H^d\eqdefa\N^d\times\N^d\times\R\setminus\{0\}$
endowed with a suitable distance~$\wh d.$
This enables us to extend  $\wh f_\H$ to the completion $\wh\H^d$ of $\wt\H^d,$ 
and to get  an \emph{explicit} 
asymptotic description of  the Fourier transform when the `vertical' frequency tends to $0.$
 
We expect our approach to be relevant for adapting to the Heisenberg framework
a number of classical results for the $\R^n$ case that are based on Fourier analysis. 
As an example, we  here  establish an explicit extension of    
the Fourier transform for smooth functions on $\H^d$ 
that are independent of the vertical variable. 
\end{abstract}

\maketitle

\noindent {\sl Keywords:}  Fourier transform, Heisenberg group, frequency space, Hermite functions.

\vskip 0.2cm

\noindent {\sl AMS Subject Classification (2000):} 43A30, 43A80.

\setcounter{equation}{0}
\section*{Introduction}\label {intro}

Fourier analysis on locally compact Abelian groups is by now classical matter 
that goes back to the first half of the 20th century (see e.g. \cite{rudin} for 
a self-contained presentation). 

Consider a locally compact Abelian group $(G,+)$ endowed with a Haar measure $\mu,$
and denote by  $(\wh G,\cdot)$ the dual group of $(G,+)$
  that is the set of characters on $G$ endowed with the standard multiplication of functions. 
By definition,  the Fourier transform of an integrable function $f:G\to\C$ is the 
continuous and bounded function $\wh f:\wh G\to\C$  (also denoted by $\cF f$)
defined by  \begin{equation}\label{def:FG}
\forall \g\in\wh G,\; \wh f(\g) = \cF f(\g)\eqdefa\int_G  f(x)\,\overline{\gamma(x)}\,d\mu(x).
\end{equation}
Being also a locally compact Abelian group, the `frequency space' $\wh G$ may be  endowed with 
a Haar measure $\wh\mu.$  It turns out  to be possible to normalize $\wh\mu$  so 
that the following \emph{Fourier inversion formula} holds true
for, say, all  function $f$ in $L^1(G)$ with $\wh f$ in $L^1(\wh G)$:
\begin{equation}\label{eq:inversionG}
\forall x\in G,\; f(x)=\int_{\wh G} \wh f(\gamma)\,\gamma(x)\,d\wh\mu(\gamma).
\end{equation}
As a consequence, we get the Fourier-Plancherel identity
\begin{equation}\label{eq:FPG}
\int_G|f(x)|^2\,d\mu(x)=\int_{\wh G} |\wh f(\g)|^2\,d\wh\mu(\g)
\end{equation}
for all $f$ in $L^1(G)\cap L^2(G).$
\medbreak
Fourier transform on locally compact Abelian groups has a number of other interesting 
properties that we do not wish to enumerate here. Let us just recall
that it changes convolution products into products of functions, namely
\begin{equation}\label{eq:convG}
\forall f\in L^1(G),\;\forall g\in L^1(G),\;\cF(f\star g)=\cF f\cdot\cF g.
\end{equation}

In the   Euclidean case of $\R^n$  the  dual group may be identified to~$(\R^n)^\star$ 
 through the map~$\xi \mapsto  e^{i\langle \xi ,\cdot\rangle}$ (where $\langle \cdot,\cdot\rangle$ 
 stands for the duality bracket  between~$(\R^n)^\star $ and~$\R^n$),  and the Fourier transform
 of an integrable function~$f$  may  thus be seen as the function on $(\R^n)^\star$ (usually identified
 to $\R^n$)  given by
\beq
\label {definFourierclassic}
\cF (f) (\xi) = \wh f(\xi)\eqdefa \int_{\R^n} e^{-i\langle  \xi,x\rangle } f(x)\, dx.
\eeq

Of course,  we have \eqref{eq:convG}  and, as is well known,  if one endows the frequency  space $(\R^n)^\star$ with the measure $\frac1{(2\pi)^n}d\xi$ then the inversion and Fourier-Plancherel 
formulae \eqref{eq:inversionG} and \eqref{eq:FPG} hold true.
Among the numerous additional properties of the Fourier transform on $\R^n,$ let
us just underline that it allows to  `diagonalize'    the Laplace operator, namely for all
 smooth compactly supported  functions, we have
\beq
\label {diagDeltaRd}
\cF(\Delta f) (\xi) =- |\xi|^2 \wh f(\xi).
\eeq

For noncommutative groups,  Fourier theory  gets wilder, 
for  the dual group is too `small' to keep the definition of the Fourier transform given in
 \eqref{def:FG} and still have the inversion formula\refeq{eq:inversionG}. 
 Nevertheless,  if the group  has `nice' properties
 (that we wish not to list here) then one can  work out a consistent Fourier theory 
 with properties analogous to \eqref{eq:inversionG}, \eqref{eq:FPG} and \eqref{eq:convG}
 (see e.g.\ccite{astengo2,crs,corwingreenleaf,hula,RS,stein2,taylor1,thangavelu} and the references therein for the case of nilpotent Lie groups).
 In that context,  the classical definition of the Fourier transform 
 amounts to replacing   characters in \eqref{def:FG} with suitable families of irreducible representations
 that are valued in Hilbert spaces (see e.g. \cite{corwingreenleaf,folland} for a detailed presentation). 
Consequently, the Fourier transform is no longer a complex valued function
but rather a family of bounded operators on suitable Hilbert spaces. 
It goes without saying that within this approach, the notion of `frequency space' becomes unclear, 
which makes Fourier theory much more cumbersome than in the Abelian case. 

In the present paper, we want to focus on the Heisenberg group which, to some 
extent, is the simplest noncommutative nilpotent Lie group
and comes into play in various areas of mathematics,
ranging  from complex analysis to geometry or number  theory,   probability  theory, quantum mechanics and partial differential equations (see e.g. \cite{bfg, farautharzallah, stein2,taylor1}). 
As  several  equivalent definitions  coexist  in the literature, 
 let us  specify the one that  we shall adopt throughout. 
\begin{definition}
{\sl 
Let~$\s(Y,Y') =\langle \eta,y'\rangle -\langle \eta',y\rangle$ be  the canonical symplectic form 
on~$T^\star\R^d.$
The Heisenberg group~$\H^d$  is the  set~$T^\star\R^d \times\R$  equipped with the product law
  $$
w\cdot w'\eqdefa
\bigl(Y+Y' , s+s'+ 2\s(Y,Y')\bigr) = \bigl(y+y',  \eta+\eta' , s+s'+2 \langle \eta,y'\rangle -2\langle \eta',y\rangle\bigr)
$$
where~$w=(Y,s)=(y,\eta,s)$ and $w'=(Y',s')=(y',\eta',s')$ are  generic elements of~$\H^d.$}
\end{definition}
As regards topology and measure theory on the Heisenberg group, 
we shall look at~$\H^d$ as the set $\R^{2d+1},$ after identifying 
  $(Y,s)$ in $\H^d$ to $(y,\eta,s)$ in $\R^{2d+1}.$ 
 With this viewpoint, the  \emph{Haar measure} on $\H^d$ is just  the  Lebesgue measure on $\R^{2d+1}.$
  In particular, one can define the following convolution product  for any two integrable functions~$f$ and~$g$:
\beq
\label {definConvolH}
f \star g ( w ) \eqdefa \int_{\H^d} f ( w \cdot v^{-1} ) g( v)\, dv 
= \int_{\H^d} f ( v ) g( v^{-1} \cdot w)\, dv.
\eeq
Even though
convolution on the Heisenberg group is noncommutative, if one
 defines the \emph{Lebesgue spaces} $L^p(\H^d)$  to be just  $L^p(\R^{2d+1}),$ then
one  still gets the classical Young inequalities in that context. 
  \smallbreak
  As already explained above, as  $\H^d$ is noncommutative, 
 in order to have a good Fourier theory, one has to 
 resort to more elaborate irreducible representations
 than character. In fact, the group of characters on~$\H^d$ is isometric to the group  of characters  on~$T^\star\R^d.$ Hence,  if one defines the Fourier transform 
 according to  \eqref{def:FG}  then  the information pertaining to the vertical variable~$s$ is  lost. 
 
  There are essentially two (equivalent) approaches. They  are based either on the  \emph{Bargmann representation} or on the \emph{Schr\"odinger representation} (see \cite{corwingreenleaf}).  For simplicity, let us just recall the second one 
 which is  the  family of  
group homomorphisms $w\mapsto U^\lam_w$ (with $\lambda\in\R\setminus\{0\}$)
between~$\H^d$ and the  unitary group~$\cU(L^2(\R^d))$ of~$L^2(\R^d),$
defined for all~$w=(y,\eta, s)$ in~$\H^d$ and $u$ in $L^2(\R^d)$ by 
$$
U^\lam _w u(x)\eqdefa e^{-i\lam (s+2\langle \eta, x-y\rangle)} u(x-2y).
$$ 
The classical definition of Fourier transform of integrable functions on $\H^d$ reads  as follows:
\begin{definition}
\label {definFourierSchrodinger}
{\sl   The\emph{ Fourier transform}   \index{Fourier!transform}
of  an integrable function~$f$ on~$\H^d$ is the family $(\cF^\H(f)(\lambda))_{\lambda\in\R\setminus\{0\}}$ of bounded operators on $L^2(\R^d)$ given  by 
$$
\cF^{\H} (f)(\lam) \eqdefa \int_{\H^d} f(w) U^\lam_w\, dw.
$$
}
\end{definition}

In the present paper, we strive for  another  definition of the Fourier 
transform, that is as similar as possible 
to the one for locally compact groups given in \eqref{def:FG}. 
 In particular, we want the Fourier transform 
to be a complex valued function defined on some explicit `frequency space' that  may be endowed
with a structure of a locally compact and complete metric space, 
and to get formulae similar to \eqref{eq:inversionG}, \eqref{eq:FPG}, \eqref{eq:convG}
together with a diagonalization of the Laplace operator (for the Heisenberg group of course)
analogous to \eqref{diagDeltaRd}.

There is a number of motivations for our approach. An important one is that, having an explicit
frequency space will  allow us to get elementary  proofs of the basic results involving the Fourier transform, 
just by mimicking the corresponding ones of the Euclidean setting. 
In particular, we expect our setting to  open the way  to new results  for 
partial differential equations on the Heisenberg group. 
Furthermore, our definition will enable us to get
an explicit (and comprehensible) description of the range of the Schwartz space by the Fourier transform.
As a consequence, extending the Fourier transform  to the set of tempered distributions
will become rather elementary
(see more details in our forthcoming paper \cite{bcdh}).

 In the present paper, we will give two concrete applications
 of our approach. First, in Theorem \ref{FourierL1basic}, we will  provide an explicit 
asymptotic description of the Fourier transform when (what plays the role of) 
the vertical frequency parameter  tends to $0.$  
Our second application is the extension (also explicit) 
of the Fourier transform to functions 
depending only on the horizontal variable (this is Theorem \ref{Fourierhorizontal}).

 \bigbreak\noindent{\bf Acknowledgments:}
  The authors wish to thank very warmly  Fulvio  Ricci for enlightening
 suggestions that played a decisive role in the
construction of this text. They are also indebted to
 Nicolas Lerner for  enriching  discussions.

 A determining part of this work has been
carried out in  the exceptional environment of  the \emph{Centre
International  de Rencontres Math\'ematiques} in Luminy.

The third author has been partially supported by the \emph{Institut
Universitaire de France}.


\section{Results}\label{s:results}

  Before presenting  the main results of the paper, let us recall 
  how, with the standard definition of the Fourier transform in $\H^d,$
  Properties \eqref{eq:inversionG}, \eqref{eq:FPG} and \eqref{eq:convG}
  may be stated (the reader may refer   to e.g.\ccite{bfg, Beals, farautharzallah, fisher, folland, geller2, hula, thangavelu2, stein2, taylor1, thangavelu} for more details).  
\begin{theorem}
\label {RecallClassicla FourierH}
{\sl Let~$f$ be an integrable function. Then  we have
\beq
\label {L1LinftyFourierbasic}
\forall\lam \in \R\setminus\{0\}\,,\  \|\cF^{\H}(f)(\lam)\|_{\cL(L^2)} \leq \|f\|_{L^1(\H^d)}
\eeq
and,  for any function~$u$ in~$L^2(\R^d)$,  the map $\lam \mapsto  \cF^\H(f) (\lam)(u)$
is continuous from $\R\setminus\{0\}$ to~$L^2(\R^d).$
\medbreak
 For any function~$f$ in the Schwartz space $\cS(\H^d)$ (which is  the classical Schwartz space on~$\R^{2d+1}$),   we have the inversion formula: 
\beq
\label {inversionHclassical}
 \forall w\in\H^d,\; f(w) =   \frac {2^{d-1}}  {\pi^{d+1} }   \int_{\R} {\rm tr}\bigl( U^\lam_{w^{-1}}\cF^\H f(\lam)\bigr)  |\lam|^d d\lam\,,
\eeq
where~${\rm tr}(A)$ denotes the trace of the operator~$A$.
\medbreak
 Moreover, if~$f$ belongs to~$L^1(\H^d)\cap L^2(\H^d)$ then for any~$\lam$ in~$\R\setminus\{0\}$,~$\cF^\H(f)(\lam)$ is an Hilbert-Schmidt operator, and we have
\beq 
\label {FourierPlancherelHclassical}
\|f\|_{L^2(\H^d)}^2= \frac  {2^{d-1}} {\pi^{d+1}} \int_{\R}\| \cF^\H(f)(\lam)\|_{HS} ^2 \,|\lam|^dd\lam
\eeq 
where~$\|\cdot\|_{HS}$ stands for the Hilbert-Schmidt norm.
}
\end{theorem}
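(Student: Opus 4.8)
The plan is to establish the four assertions in turn, treating them as standard consequences of the Schr\"odinger representation together with the spectral theory of the Hermite operator. First, for the operator-norm bound \eqref{L1LinftyFourierbasic}: since $w\mapsto U^\lam_w$ is valued in the unitary group $\cU(L^2(\R^d))$, each $U^\lam_w$ has operator norm $1$, so for any $u,v\in L^2(\R^d)$ one has $|\langle \cF^\H(f)(\lam)u,v\rangle| \leq \int_{\H^d}|f(w)|\,\|U^\lam_w u\|_{L^2}\,\|v\|_{L^2}\,dw = \|f\|_{L^1}\|u\|_{L^2}\|v\|_{L^2}$, whence the claim by duality. The continuity of $\lam\mapsto\cF^\H(f)(\lam)u$ follows from dominated convergence: the integrand $f(w)U^\lam_w u(x)$ depends continuously on $\lam$ for a.e.\ $(w,x)$ (the phase $e^{-i\lam(s+2\langle\eta,x-y\rangle)}$ is continuous in $\lam$), and is dominated in $L^2(\H^d\times\R^d)$-fashion by $|f(w)|\,|u(x-2y)|$, which is integrable after the change of variables $x\mapsto x-2y$; a density argument in $u$ then removes any residual issue, though in fact the bound in \eqref{L1LinftyFourierbasic} already makes the estimate uniform.

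For the inversion formula \eqref{inversionHclassical} and the Plancherel identity \eqref{FourierPlancherelHclassical}, I would diagonalize $\cF^\H(f)(\lam)$ in the Hermite basis. Writing the Schr\"odinger representation in terms of the partial Fourier transform in the vertical variable $s$, one sees that $\cF^\H(f)(\lam)$ depends only on the partial Fourier transform $\widehat{f}^{\,s}(Y,\lam)$, and acting on the Hermite functions $h_{n,\lam}$ (eigenfunctions of the rescaled harmonic oscillator $-\Delta + \lam^2|x|^2$) produces matrix coefficients that are, up to normalization, the Fourier–Wigner transform of $f$. The key computational input is the resolution of identity for Hermite functions on $L^2(\R^d)$ together with the orthogonality relations of the matrix coefficients $w\mapsto \langle U^\lam_w h_{m,\lam}, h_{n,\lam}\rangle$; integrating these against $|\lam|^d\,d\lam$ and summing over $(m,n)\in\N^d\times\N^d$ reconstructs the Dirac mass at the origin of $\H^d$, which gives \eqref{inversionHclassical}, and the Plancherel formula \eqref{FourierPlancherelHclassical} then follows by polarization and the definition of the Hilbert–Schmidt norm $\|A\|_{HS}^2 = \sum_{m,n}|\langle A h_{m,\lam}, h_{n,\lam}\rangle|^2$. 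The constant $\frac{2^{d-1}}{\pi^{d+1}}$ is pinned down by tracking the Jacobians in the change of variables $x\mapsto x-2y$, the normalization of the inverse Fourier transform in $s$, and the $\lam$-dependent dilation relating $h_{n,\lam}$ to the standard Hermite functions $h_n$.

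The main obstacle is the careful bookkeeping in the Hilbert–Schmidt/trace computations: one must justify that $\cF^\H(f)(\lam)$ is trace class (resp.\ Hilbert–Schmidt) for $f\in\cS(\H^d)$ (resp.\ $f\in L^1\cap L^2$), interchange the $\lam$-integral with the trace and with the sum over the Hermite basis, and verify absolute convergence of $\int_\R \|\cF^\H(f)(\lam)\|_{HS}^2\,|\lam|^d\,d\lam$. For Schwartz $f$ this follows from rapid decay of the Fourier–Wigner coefficients, which in turn comes from repeated integration by parts exploiting that the $h_{n,\lam}$ are eigenfunctions of the harmonic oscillator; for $f\in L^1\cap L^2$ one argues by density from $\cS(\H^d)$, using \eqref{L1LinftyFourierbasic} to control the $L^1$ part and the (already established for Schwartz functions) Plancherel identity to control the $L^2$ part. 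Since all of this is classical and documented in the references cited just before the theorem statement, I would present it concisely, referring to \cite{folland, taylor1, thangavelu} for the detailed verifications and focusing only on fixing conventions and the explicit value of the normalizing constant.
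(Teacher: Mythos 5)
The paper offers no proof of this theorem at all: it is recalled as a classical result, with the reader sent to the references listed just before the statement, which is essentially what your proposal also does after sketching the standard arguments. Your outline — unitarity of $U^\lam_w$ plus duality for \eqref{L1LinftyFourierbasic}, vector-valued dominated convergence for the continuity of $\lam\mapsto\cF^\H(f)(\lam)u$, and diagonalization in the Hermite basis with the orthogonality relations of the Fourier--Wigner matrix coefficients for \eqref{inversionHclassical} and \eqref{FourierPlancherelHclassical} — is the standard route and is sound; the only caution is that the constant $\frac{2^{d-1}}{\pi^{d+1}}$ is tied to this paper's conventions (the factor $2$ in the group law and in $U^\lam_w u(x)=e^{-i\lam(s+2\langle\eta,x-y\rangle)}u(x-2y)$), so the Jacobian and dilation bookkeeping you defer is precisely where the work specific to this normalization lies, and the trace-class/Hilbert--Schmidt justifications you postpone are handled in the cited texts exactly as you describe (rapid decay of the coefficients for Schwartz $f$, then density for $L^1\cap L^2$).
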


We also have an analogue of the convolution identity \eqref{eq:convG}. 
Indeed, as the map~$w\mapsto U^\lam_w$ is a homomorphism
between~$\H^d$ and~$\cU(L^2(\R^d))$, we
get for any integrable functions $f$ and~$g,$
\beq
\label {FourierConvol} 
\cF^{\H} (f\star g) (\lam)  =  \cF^{\H} (f)(\lam)\circ \cF^{\H}(g)(\lam).
\eeq

Let us next recall the definition of the (sub-elliptic) Laplacian on the Heisenberg group, that
will play a fundamental role in our approach.
Being  a real Lie group,  the Heisenberg group
  may be equipped with a linear space   of   \emph{left invariant} vector fields,
that is vector fields commuting with any   left translation~$\tau_w(w') \eqdefa w\cdot w'$.
It is well known that this linear space has dimension $2d+1$ and  is generated by the vector fields
$$ S\eqdefa\partial_s\,,\ \  \cX_j\eqdefa\partial_{y_j} +2\eta_j\partial_s\andf \Xi_j\eqdefa \partial_{\eta_j} -2y_j\partial_s\,,\ 1\leq j\leq d.
$$
The  \emph{Laplacian} \index{Laplacian}
associated to the vector fields~$(\cX_j)_{1\leq j\leq d}$ and~$(\Xi_j)_{1\leq j\leq d}$ reads
\begin{equation}
\label{defLaplace}
\D_{\H}  \eqdefa \sum_{j=1} ^d (\cX_j^2+\Xi_j^2). 
\end{equation}

As in the Euclidean case (see Identity\refeq{diagDeltaRd}), 
Fourier transform allows to diagonalize Operator $\D_\H$: it  is  based on the following relation 
that holds true for all functions $f$ and $u$ in $\cS(\H^d)$ and $\cS(\R^d),$ respectively (see e.g.\ccite{huet, O}):
\beq
\label {FourierEtLaplace}
\cF^{\H}(\D_\H f) (\lam) = 4\cF^{\H}(f)(\lam)  \circ  \D_{\rm osc} ^\lam \with \D_{\rm osc}^\lam u (x) \eqdefa\sum_{j=1}^d \partial_j^2 u(x) - \lam^2|x|^2 u(x).
\eeq
This prompts us  to take advantage of  the spectral structure of the harmonic oscillator to get an analog of Formula\refeq {diagDeltaRd}.
To this end, we need to introduce the family of Hermite functions~$(H_n)_{n\in\N^d}$
defined by 
\beq
\label{Hermite functions}
H_n \eqdefa  \Bigl(\frac 1 {2^{|n|} n!}\Bigr) ^{\frac 12}C^n H_0 \with C^n \eqdefa \prod_{j=1}^d C_j^{n_j}
\andf H_0(x)\eqdefa \pi^{-\frac d 2} e^{-\frac {|x|^2} 2},
\eeq
where~$C_j\eqdefa -\partial_j +M_j$ stands for  the 
 \emph{creation operator} with respect to the $j$-th variable
 and~$M_j$ is the multiplication operator  defined by~$M_ju(x)\eqdefa x_ju(x).$
 As usual, $n!\eqdefa n_1!\dotsm n_d!$ and $|n|\eqdefa n_1+\cdots+n_d$. 
\medbreak
It is well known that the family~$ \suite H n {\N^d}$ is an orthonormal basis of~$L^2(\R^d).$  In particular,
\begin{equation}\label{def:kro}
 \forall  (n,m)\in\N^d\times\N^d \,,\ (H_n|H_m)_{L^2}=\delta_{n,m} ,
\end{equation}
where $\delta_{n,m}=1$ if $n=m,$ and $\delta_{n,m}=0$ if $n\not=m.$
\medbreak
Besides,  we have
\beq
\label {relationsHHermite}
( -\partial_j^2+M_j^2) H_n =( 2n_j+1) H_n \quad\hbox{and thus}\quad -\D_{\rm osc}^1 H_n = (2|n|+d) H_n.
\eeq
For~$\lam$ in~$\R\setminus\{0\},$ we further introduce
the rescaled Hermite function~$H_{n,\lam} (x)\eqdefa |\lam|^{\frac d 4} H_n(|\lam|^{\frac 12} x)$. 
It is obvious that~$(H_{n,\lam})_{n\in \N^d}$ is still  an orthonormal basis of~$L^2(\R^d)$ and that 
\beq
\label {relationsHHermiteD}
( -\partial_j^2+\lam^2M_j^2) H_{n,\lam} =( 2n_j+1)|\lam| H_{n,\lam} \quad\hbox{and thus}\quad -\D_{\rm osc}^\lam H_{n,\lam} = (2|n|+d)|\lam| H_{n,\lam}.
\eeq
We are now ready  to give `our' definition of the  Fourier transform on $\H^d.$
\begin{definition}
\label {definFouriercoeffH}
{\sl 
Let~$\wt \H ^d\eqdefa \N^{d}\times\N^d\times \R\setminus\{0\}.$ We denote by~$\wh w=(n,m,\lam)$ a generic point of~$\wt \H^d$. For~$f$ in~$L^1(\H^d)$,  we define 
the map~$\cFH f$ (also denoted by~$\wh f_\H$) to be 
$$
\cFH f: \ 
\left\{
\begin{array}{ccl}
\wt \H ^d  & \longrightarrow & \C\\[1ex]
\wh w & \longmapsto & 
\bigl(\cF^{\H}(f)(\lam) H_{m,\lam} |H_{n,\lam}\bigr)_{L^2}.
\end{array}
\right.
$$
}
\end{definition}

{}From now on, we shall use only  that  definition of the Fourier transform, which  amounts
to  considering the `infinite matrix' of~$\cF^\H f(\lam)$ in the orthonormal basis of~$L^2(\R^d)$ given by~$(H_{n,\lam})_{n\in \N}.$ 
For further purpose, it is in fact much more  convenient to rewrite~$\cF_\H f$  in terms of 
the mean value of~$f$ \emph{modulated by  some oscillatory functions}
which may be seen as  suitable Wigner distribution functions  of  the family~$(H_{n,\lam})_{n\in\N^d,\lam\not=0},$ and  will play the same role as
 the characters~$e^{i\langle \xi, \cdot\rangle}$ in the Euclidean case. 
 Indeed, by definition, we have 
 $$
 \cF_\H f(\wh w)=  \int_{\H^d\times \R^d} f(w) e^{-is\lam} e^{-2i\lam\langle \eta, x-y\rangle} H_{m,\lam} (x-2y) H_{n,\lam} (x) \,dw \,dx.
 $$
 Therefore, making an obvious change of variable, we discover that
\begin{eqnarray}\label {definFourierWigner}
\cF_\H f(\wh w) &&\!\!\!\!\!\!\!= \int_{\H^d}  \overline{e^{is\lam} \cW(\wh w,Y)}\, f(Y,s) \,dY\,ds
\with\\\label{definWigner}
\ds \cW(\wh w,Y) && \!\!\!\!\!\!\!\! \eqdefa \int_{\R^d} e^{2i\lam\langle \eta,z\rangle} H_{n,\lam} (y+z) H_{m,\lam} (-y+z) \,dz.
\end{eqnarray}
At this stage,  looking at the action of the Laplace operator  on functions~$e^{is\lam} \cW(\wh w,Y)$ is illuminating.   
Indeed, easy computations (carried out in Appendix) give
\beq
\label {LaplacianHWignerj}
(\cX_j^2+\Xi_j^2)  \bigl(e^{is\lam} \cW(\wh w,Y)\bigr )= -4|\lam| (2m_j+1) e^{is\lam} \cW(\wh w,Y).
\eeq
By summation on $j\in\{1,\cdots,d\},$ we get
\beq
\label {DeltaWignerHermite}
\Delta_\H \bigl (e^{is\lam} \cW(\wh w,Y) \bigr) = -4|\lam|(2|m|+d) e^{is\lam} \cW(\wh w,Y),
\eeq
from which one may deduce that, whenever $f$ is in $\cS(\H^d)$  (again, refer to the Appendix), 
\beq
\label {FourierdiagDeltaHfond}
\cF_{\H}(\D_\H f) (\wh w) = -4|\lam|(2|m|+d) \wh f_\H(\wh w).
\eeq
Let us underline the similarity with Relation\refeq {diagDeltaRd} pertaining to the Fourier transform in $\R^n.$  

\medbreak

One of the basic principles of the  Fourier transform on $\R^n$ is  that `{\it regularity implies decay}'. It remains
true in the Heisenberg framework, as stated in   the following  lemma.
\begin{lemma}
\label {decaylambdan}
{\sl 
For any non negative integer~$p$, there exist an integer~$N_p$ and a positive constant~$C_p$ such that 
for any~$\wh w$ in~$\wt \H^d$ and any~$f$ in~$\cS(\H^d),$ we have 
\begin{equation}
\label {eq:decay}
\bigl(1+ |\lam|(  |n| + |m|+ d) +|n-m|  \bigr)^p  |\wh f_\H(n,m,\lam)|  \leq  C_p  \|f\|_{N_p,\cS},
\end{equation}
where~$\|\cdot\|_{N,\cS}$ denotes the classical family of semi-norms of~$\cS(\R^{2d+1})$, namely
$$\|f\|_{N,\cS}\eqdefa \sup_{ |\al|\leq N}  \bigl\|(1+|Y|^2+s^2)^{N/2}\,\partial_{Y,s}^\al f \bigr\|_{L^\infty}.$$
}
\end{lemma}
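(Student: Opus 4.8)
The plan is to establish \eqref{eq:decay} by combining two independent decay mechanisms: decay in the `vertical' frequency $\lam$ together with the Hermite eigenvalue $|\lam|(2|m|+d)$, which comes from the diagonalization identity \eqref{FourierdiagDeltaHfond}, and decay in the `horizontal shift' $|n-m|$, which must be extracted from the oscillatory structure of the Wigner function $\cW(\wh w,Y)$ in \eqref{definWigner}. Since for $p=0$ the estimate is just $|\wh f_\H(\wh w)|\le\|f\|_{L^1}\le C\|f\|_{N_0,\cS}$ (recall $|\cW(\wh w,Y)|\le 1$ because the $H_{n,\lam}$ are $L^2$-normalized, or alternatively use \eqref{L1LinftyFourierbasic} and Cauchy--Schwarz), it suffices to prove separately that for every $k$ there are $N,C$ with
\[
\bigl(|\lam|(2|m|+d)\bigr)^k\,|\wh f_\H(\wh w)|\le C\|f\|_{N,\cS}
\qquad\hbox{and}\qquad
|n-m|^k\,|\wh f_\H(\wh w)|\le C\|f\|_{N,\cS},
\]
and then to interpolate crudely (a term $(1+A+B)^p$ is bounded by $C_p(1+A^p+B^p)$). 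The roles of $|n|$ and $|m|$ are interchangeable up to swapping $n\leftrightarrow m$ and $Y\to -Y$-type symmetries in \eqref{definFourierWigner}, so controlling $|\lam|(2|m|+d)$ also controls $|\lam|(2|n|+d)$, hence the stated $|\lam|(|n|+|m|+d)$.

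For the first mechanism I would iterate \eqref{FourierdiagDeltaHfond}: applying it $k$ times gives $\bigl(-4|\lam|(2|m|+d)\bigr)^k\wh f_\H(\wh w)=\cF_\H(\D_\H^k f)(\wh w)$, whence
\[
\bigl(4|\lam|(2|m|+d)\bigr)^k\,|\wh f_\H(\wh w)|\le \|\D_\H^k f\|_{L^1(\H^d)}\le C_k\,\|f\|_{N_k,\cS},
\]
the last step because $\D_\H$ is a second-order differential operator with polynomial coefficients (of degree one in $Y$), so $\D_\H^k f$ is a finite combination of $\partial^\al f$ with $|\al|\le 2k$ multiplied by polynomials of degree $\le 2k$, and the weight $(1+|Y|^2+s^2)^{N_k/2}$ in the Schwartz seminorm with $N_k$ large enough makes it $L^1(\R^{2d+1})$-integrable. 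This handles the $|\lam|(2|m|+d)$ part cleanly, and also $|\lam|(2|n|+d)$ after the symmetry remark.

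The genuinely new point, and the one I expect to be the main obstacle, is the $|n-m|^k$ decay, since no differential operator on $\H^d$ produces the factor $|n-m|$ directly. Here the idea is to exploit the algebraic action of the creation/annihilation operators on Hermite functions: the quantity $|n-m|$ measures the mismatch of Hermite indices between the two arguments of $\cW$, and one expects an identity expressing $(n_j-m_j)\,\cW(\wh w,Y)$ — or rather $(n_j-m_j)\,e^{is\lam}\cW(\wh w,Y)$ — as the image of $e^{is\lam}\cW(\wh w,Y)$ under a first-order left-invariant operator (a suitable real combination of $\cX_j$ and $\Xi_j$, morally $\tfrac{1}{|\lam|}$ times a `ladder' operator), analogous to how \eqref{LaplacianHWignerj} encodes $2m_j+1$. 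Concretely I would compute, as in the Appendix for \eqref{LaplacianHWignerj}, the action of $\cX_j\pm i\,\Xi_j$ on $e^{is\lam}\cW(\wh w,Y)$ using the recurrence relations $M_jH_n=\tfrac12\sqrt{2n_j+2}\,H_{n+e_j}+\tfrac12\sqrt{2n_j}\,H_{n-e_j}$ and $\partial_jH_n=\tfrac12\sqrt{2n_j}\,H_{n-e_j}-\tfrac12\sqrt{2n_j+2}\,H_{n+e_j}$; after transposing onto $f$ via integration by parts as in the derivation of \eqref{FourierdiagDeltaHfond}, this should yield $(n_j-m_j)\wh f_\H(\wh w)$ (up to bounded factors and shifts $m\to m\pm e_j$, $n\to n\pm e_j$, which is harmless since the target seminorm only grows) as $\cF_\H$ of a first-order-with-polynomial-coefficient derivative of $f$. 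Iterating $k$ times and summing over $j$ then gives $|n-m|^k|\wh f_\H(\wh w)|\le C_k\|f\|_{N_k,\cS}$. Assembling the three bounds and choosing $N_p$ to be the largest of the auxiliary $N_k$'s needed for $k\le p$ completes the proof; the only care required is bookkeeping the index shifts and checking that the ladder computation does not secretly reintroduce a factor of $|\lam|$ that would spoil the estimate when $|\lam|$ is small.
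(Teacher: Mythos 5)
Your first mechanism is sound and is exactly what the paper does: iterating \eqref{FourierdiagDeltaHfond} gives $4^p|\lam|^p(2|m|+d)^p\,|\wh f_\H(n,m,\lam)|\leq\|\D_\H^p f\|_{L^1(\H^d)}$, and since $\D_\H^p f$ is a combination of derivatives of order $\leq 2p$ with polynomial coefficients, this is bounded by a Schwartz seminorm; the $n\leftrightarrow m$ symmetry (or, as in the paper, the right-invariant Laplacian) then takes care of $|\lam|(2|n|+d)$. The genuine gap is in the $|n-m|^k$ part, which you correctly identify as the crux but do not actually resolve. The concrete operators you propose, $\cX_j\pm i\,\Xi_j$, cannot produce $n_j-m_j$: redoing the appendix computation, $\bigl(\cX_j\pm i\,\sgn(\lam)\,\Xi_j\bigr)\bigl(e^{is\lam}\cW(n,m,\lam,Y)\bigr)$ is, up to a harmless constant, $\sqrt{2m_j}\,|\lam|^{\frac12}\,e^{is\lam}\cW(n,m-\d_j,\lam,Y)$ (resp.\ $\sqrt{2m_j+2}\,|\lam|^{\frac12}\,e^{is\lam}\cW(n,m+\d_j,\lam,Y)$). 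Left-invariant fields only ever act on the second Hermite factor in \eqref{definWigner}, i.e.\ only on the index $m$, so no combination of them can see the difference $n_j-m_j$; transposing onto $f$ you merely recover decay in $\sqrt{|\lam|\,m_j}$, which you already have from $\D_\H$. Moreover, the $|\lam|$ you flag at the end is not a bookkeeping issue: what second-order combinations of invariant fields naturally produce is $|\lam|(n_j-m_j)$, and a bound on $|\lam|\,|n-m|\,|\wh f_\H|$ is strictly weaker than \eqref{eq:decay} precisely in the regime $\lam\to0$, which is the regime where the lemma is used (continuity up to $\wh\H^d_0$, hypothesis \eqref{eq:condtheta}).

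The missing idea — and the paper's proof — is to bring in the \emph{right-invariant} vector fields $\wt\cX_j=\partial_{y_j}-2\eta_j\partial_s$ and $\wt\Xi_j=\partial_{\eta_j}+2y_j\partial_s$, which act on the index $n$ and yield the analogue of \eqref{LaplacianHWignerj} with eigenvalue $4|\lam|(2n_j+1)$. Subtracting the two second-order identities and using the algebraic relation $-\wt\cX_j^2+\cX_j^2-\wt\Xi_j^2+\Xi_j^2=8\,\partial_s\cT_j$ with $\cT_j\eqdefa\eta_j\partial_{y_j}-y_j\partial_{\eta_j}$, the factor $i\lam$ produced by $\partial_s$ acting on $e^{is\lam}$ cancels the $|\lam|$ in $|\lam|(n_j-m_j)$, giving $(n_j-m_j)\,\cW(\wh w,Y)=i\,\sgn(\lam)\,\cT_j\cW(\wh w,Y)$ (this is \eqref{Fourierhorizontaldemoeq112}--\eqref{Fourierhorizontaldemoeq1122}). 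Since $\cT_j$ is first order, $\lam$-independent, with coefficients linear in $Y$, iterating and integrating by parts yields $|n-m|^p\,|\wh f_\H(n,m,\lam)|\leq\sup_{|\al|=p}\|\cT^\al f\|_{L^1(\H^d)}\leq C_p\|f\|_{N_p,\cS}$, and combining with your first mechanism finishes the proof along the lines you outlined.
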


As may be easily checked by the reader, in our setting, there are very simple formulae 
corresponding to \refeq {inversionHclassical} and  \refeq {FourierPlancherelHclassical}, if
the set~$\wt \H^d$ is endowed with the  measure $d\wh w$ defined by: 
\beq
\label {definmeasurewhH}
\int_{\wt \H^d} \theta (\wh w)\, d\wh w\eqdefa \sum_{(n,m)\in \N^{2d}} \int_{\R} \theta (n,m,\lam) |\lam|^d d\lam.
\eeq
Then Theorem\refer{RecallClassicla FourierH} recasts as follows: 
\begin{theorem}
\label {inverseFourier-Plancherel}
{\sl 
Let~$f$ be a function in~$\cS(\H^d)$. Then 
the following inversion formula holds true:\beq
\label {MappingofPHdemoeq1}
f(w) = \frac {2^{d-1}}  {\pi^{d+1} }   \int_{\wt \H^d} 
e^{is\lam} \cW(\wh w, Y)\wh f_\H (\wh w) \, d\wh w.
\eeq
Moreover,  for any function $f$ in~$L^1(\H^d)\cap L^2(\H^d),$  we have
\beq
\label {inverseFouriereq2}
\|\wh f_\H\|_{L^2(\wt \H^d)}^2 = \frac {\pi^{d+1}} {2^{d-1}} \|f\|_{L^2(\H^d)}^2.
\eeq
}
\end{theorem}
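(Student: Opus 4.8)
The plan is to deduce both formulae directly from the classical statement in Theorem \ref{RecallClassicla FourierH}, simply by rewriting the trace and the Hilbert--Schmidt norm appearing there in terms of the orthonormal basis $(H_{n,\lam})_{n\in\N^d}$ of $L^2(\R^d)$. For the inversion formula \eqref{MappingofPHdemoeq1}, I would start from \eqref{inversionHclassical}, that is
$$
f(w) = \frac{2^{d-1}}{\pi^{d+1}}\int_\R {\rm tr}\bigl(U^\lam_{w^{-1}}\cF^\H f(\lam)\bigr)\,|\lam|^d\,d\lam,
$$
and expand the trace in the basis $(H_{n,\lam})_{n\in\N^d}$:
$$
{\rm tr}\bigl(U^\lam_{w^{-1}}\cF^\H f(\lam)\bigr) = \sum_{n\in\N^d}\bigl(U^\lam_{w^{-1}}\cF^\H f(\lam)H_{n,\lam}\,\big|\,H_{n,\lam}\bigr)_{L^2}.
$$
Inserting the resolution of the identity $\sum_{m\in\N^d}(\cdot\,|\,H_{m,\lam})_{L^2}H_{m,\lam}$ between the two operators gives
$$
{\rm tr}\bigl(U^\lam_{w^{-1}}\cF^\H f(\lam)\bigr) = \sum_{(n,m)\in\N^{2d}}\bigl(\cF^\H f(\lam)H_{m,\lam}\,\big|\,H_{n,\lam}\bigr)_{L^2}\bigl(U^\lam_{w^{-1}}H_{n,\lam}\,\big|\,H_{m,\lam}\bigr)_{L^2}.
$$
The first factor is precisely $\wh f_\H(n,m,\lam)$ by Definition \ref{definFouriercoeffH}. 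For the second factor, I would use the explicit formula $U^\lam_w u(x)=e^{-i\lam(s+2\langle\eta,x-y\rangle)}u(x-2y)$ together with the computation already carried out before the statement (the passage from Definition \ref{definFouriercoeffH} to \eqref{definFourierWigner}--\eqref{definWigner}), which identifies exactly this kind of matrix element with $e^{is\lam}\cW(\wh w,Y)$ up to conjugation; here one applies it with $w^{-1}$ in place of $w$, and after the obvious change of variables one recognizes $e^{is\lam}\cW(\wh w,Y)$. Summing over $(n,m)$ and integrating in $\lam$ against $|\lam|^d\,d\lam$ then gives exactly $\int_{\wt\H^d}e^{is\lam}\cW(\wh w,Y)\wh f_\H(\wh w)\,d\wh w$ by the definition \eqref{definmeasurewhH} of the measure, which is \eqref{MappingofPHdemoeq1}.

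For the Plancherel identity \eqref{inverseFouriereq2}, I would proceed similarly starting from \eqref{FourierPlancherelHclassical}. Since $\cF^\H f(\lam)$ is Hilbert--Schmidt for $f\in L^1\cap L^2$, its Hilbert--Schmidt norm is computed in the orthonormal basis $(H_{m,\lam})_{m\in\N^d}$ by
$$
\|\cF^\H f(\lam)\|_{HS}^2 = \sum_{m\in\N^d}\|\cF^\H f(\lam)H_{m,\lam}\|_{L^2}^2 = \sum_{(n,m)\in\N^{2d}}\bigl|\bigl(\cF^\H f(\lam)H_{m,\lam}\,\big|\,H_{n,\lam}\bigr)_{L^2}\bigr|^2 = \sum_{(n,m)\in\N^{2d}}|\wh f_\H(n,m,\lam)|^2,
$$
where in the middle step I expanded $\cF^\H f(\lam)H_{m,\lam}$ in the basis $(H_{n,\lam})_{n\in\N^d}$ and used Parseval. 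Plugging this into \eqref{FourierPlancherelHclassical} and recalling \eqref{definmeasurewhH} yields
$$
\|f\|_{L^2(\H^d)}^2 = \frac{2^{d-1}}{\pi^{d+1}}\int_\R\sum_{(n,m)\in\N^{2d}}|\wh f_\H(n,m,\lam)|^2\,|\lam|^d\,d\lam = \frac{2^{d-1}}{\pi^{d+1}}\|\wh f_\H\|_{L^2(\wt\H^d)}^2,
$$
which is \eqref{inverseFouriereq2} after rearranging the constant.

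The only genuinely delicate point is the interchange of the sums over $n$ and $m$ with the integral over $\lam$ (and the insertion of the resolution of the identity inside the trace): one must justify absolute convergence. For $f\in\cS(\H^d)$ this is immediate from Lemma \ref{decaylambdan}, which provides rapid decay of $|\wh f_\H(n,m,\lam)|$ in all of $|n|$, $|m|$ and $|\lam|$, so Fubini applies without difficulty; for the Plancherel part with $f\in L^1\cap L^2$ one argues by density of $\cS(\H^d)$, or simply notes that all terms are non-negative so Tonelli's theorem applies directly. I would therefore expect the proof to be short: the substance is the two basis expansions above, and the main obstacle is merely bookkeeping the constant $2^{d-1}/\pi^{d+1}$ and checking that the matrix-element computation of $(U^\lam_{w^{-1}}H_{n,\lam}\,|\,H_{m,\lam})_{L^2}$ indeed reproduces $e^{is\lam}\cW(\wh w,Y)$, which is already essentially done in the discussion preceding the statement.
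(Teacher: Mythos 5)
Your proposal is correct and follows exactly the route the paper intends (the paper gives no separate proof, asserting that Theorem\refer{RecallClassicla FourierH} ``recasts'' as this statement): one expands the trace and the Hilbert--Schmidt norm in the orthonormal basis $(H_{n,\lam})_{n\in\N^d}$, identifies the matrix element $(U^\lam_{w^{-1}}H_{n,\lam}|H_{m,\lam})_{L^2}$ with $e^{is\lam}\cW(\wh w,Y)$, and uses the measure\refeq{definmeasurewhH}, with convergence handled by Lemma\refer{decaylambdan} (resp.\ nonnegativity) exactly as you indicate.
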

In this new setting, the  convolution identity\refeq{FourierConvol}
rewrites  as follows for all integrable functions   $f$ and $g$: 
\begin{multline}
\label {newFourierconvoleq1}
 \cFH (f\star g) (n,m,\lam)   = ( \wh f_\H \cdot \wh g_\H)(n,m,\lam)\\ \with
 ( \wh f_\H  \cdot \wh g_\H)(n,m,\lam)  \eqdefa \sum_{\ell\in \N^{d}} \wh f_\H(n,\ell,\lam)\wh g_\H(\ell,m,\lam).
 \end{multline}
 The reader is referred to the appendix for the proof.

 Next, we aim at endowing the set~$\wt\H^d$ with a structure of metric space. 
 According to the decay inequality\refeq {eq:decay}, it is natural to introduce 
  the following  distance~$\wh d$:
\beq
\label {defindistancewtH}
\wh d(\wh w,\wh w') \eqdefa \bigl|\lam(n+m)-\lam'(n'+m')\bigr|_1 +\bigl |(n-m)-(n'-m')|_1+|\lam-\lam'|,
\eeq
where~$|\cdot|_1$ denotes the~$\ell^1$ norm on~$\R^d$.
\medbreak
At  first glance, the metric space~$(\wt \H^d,\wh d)$  seems to 
be the natural frequency space within our approach. However,  it fails  to be complete, 
which may  be a source of difficulties for further development. 
We thus propose to work with its completion, that   is described in the following proposition.
\begin{proposition}
\label {completionHtilde}
{\sl 
The completion of the set~$\wt \H^d$  for the distance~$\wh d$ is the set~$\wh \H^d$ defined by
$$
\wh \H^d\eqdefa  \bigl(\N^{2d} \times\R\setminus\{0\}\bigr)
 \cup \wh \H^d_0 \with \wh \H^d_0 \eqdefa {\R_{\mp}^d}\times \Z^d
\andf
{\R_{\mp}^d}\eqdefa ((\R_-)^d\cup (\R_+)^d).
$$
On $\wh\H^d,$ the extended distance (still denoted by~$\wh d$) is given by
$$
\begin{aligned}
&\wh d((n,m,\lam),(n',m',\lam')) = \bigl|\lam(n+m)-\lam'(n'+m')\bigr|_1 +\bigl |(m-n)-(m'-n')|_1+|\lam-\lam'|
\\&\hspace{11.5cm} \hbox{if }\ \lam\not=0\ \hbox{ and }\ \lam'\not=0,\\
&\wh d\bigl ((n,m,\lam), (\dot x, k)\bigr ) =  \wh d\bigl ((\dot x, k), (n,m,\lam) \bigr )  \eqdefa |\lam(n+m)-\dot x|_1+ |m-n-k|_1+|\lam|  \ \hbox{ if }\ \lam\not=0, \\
&\wh d\bigl ((\dot x,k), (\dot x', k')\bigr )  = |\dot x-\dot x'|_1+|k-k'|_1.
\end{aligned}$$
}
\end{proposition}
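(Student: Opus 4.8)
The plan is to verify directly that $(\wh\H^d,\wh d)$ is a complete metric space in which $\wt\H^d$ embeds isometrically as a dense subset; since the completion of a metric space is unique up to isometry, this will identify $\wh\H^d$ with the completion of $(\wt\H^d,\wh d)$. I would organize the argument in four steps: (i) check that the stated formula for $\wh d$ is indeed a distance on $\wh\H^d$; (ii) check that $\wt\H^d$ is dense in $\wh\H^d$; (iii) check that $(\wh\H^d,\wh d)$ is complete; and (iv) conclude. The reassuring preliminary observation is that all three pieces of $\wh d$ are built from $\ell^1$-norms of differences and an absolute value, so the triangle inequality and symmetry come essentially for free; the only genuine point in (i) is that $\wh d(p,q)=0$ forces $p=q$, which must be examined separately in each of the mixed cases (a point with $\lambda\neq0$ versus a limit point $(\dot x,k)\in\wh\H^d_0$, and two limit points).

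\textbf{Density.} For step (ii), the key is to show every $(\dot x,k)\in\wh\H^d_0={\R_\mp^d}\times\Z^d$ is a limit of points of $\wt\H^d$. Given $\dot x$ with all coordinates, say, positive, and $k\in\Z^d$, I would pick a small $\lambda>0$ and look for $(n,m)\in\N^d\times\N^d$ with $\lambda(n+m)\approx\dot x$ and $m-n=k$; explicitly, choose $n_j,m_j\in\N$ with $m_j-n_j=k_j$ and $n_j+m_j$ the nearest integer to $\dot x_j/\lambda$ (this is solvable once $\lambda$ is small, since $n_j+m_j$ and $k_j$ have the same parity constraint that can be met by enlarging both $n_j$ and $m_j$ by $1$ if needed). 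Then
$$\wh d\bigl((n,m,\lambda),(\dot x,k)\bigr)=\bigl|\lambda(n+m)-\dot x\bigr|_1+|m-n-k|_1+|\lambda|\leq d\lambda+0+\lambda\xrightarrow[\lambda\to0^+]{}0.$$
The case $\lambda<0$ and $\dot x\in(\R_-)^d$ is symmetric. Points of $\wt\H^d$ with $\lambda\neq0$ are of course their own limits, so $\wt\H^d$ is dense.

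\textbf{Completeness.} This is the main obstacle and deserves the most care. Let $(\wh w_k)$ be a Cauchy sequence in $\wh\H^d$. First dispatch the easy dichotomy: if the $\lambda$-components (which are $0$ on $\wh\H^d_0$) do not tend to $0$, then eventually they are bounded away from $0$, which — using the first and third terms of $\wh d$ — forces $n+m$, hence $n$ and $m$ (via the second term $|m-n|_1$), hence $\lambda$, to be eventually constant integers, so the sequence is eventually stationary and converges in $\wt\H^d$. If instead $\lambda_k\to0$, I would set $\dot x_k\eqdefa\lambda_k(n_k+m_k)$ (or $\dot x_k$ itself if $\wh w_k\in\wh\H^d_0$) and $k_k\eqdefa m_k-n_k$ (resp. the $\Z^d$-component): the Cauchy property of $\wh d$ says precisely that $(\dot x_k)$ is Cauchy in $\R^d$ and $(k_k)$ is Cauchy in $\Z^d$, hence eventually constant equal to some $k\in\Z^d$, while $\dot x_k\to\dot x$ for some $\dot x\in\R^d$. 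The delicate point is to show $\dot x\in{\R_\mp^d}$, i.e. that the limit cannot have coordinates of mixed sign or a zero coordinate sitting "between the two sheets": here one uses that for $\lambda\neq0$ the vector $\lambda(n+m)$ has all coordinates of the same sign as $\lambda$ (and $|\lambda|(n_j+m_j)\geq|\lambda|\to0$ only if $n_j+m_j$ is bounded, in which case that coordinate of $\dot x_k$ tends to $0$); a short case analysis shows the only obstruction, a coordinate limiting to $0$ from a sheet where other coordinates stay positive and from another where they stay negative, is ruled out because the sign of $\lambda_k$ is eventually constant along a Cauchy sequence with $\lambda_k\to0$ — wait, more carefully: $\lambda_k$ need not have constant sign a priori, but if infinitely many $\lambda_k>0$ and infinitely many $\lambda_k<0$ then along those subsequences all nonzero coordinates of $\dot x_k$ have opposite signs, and matching the limits coordinatewise forces every coordinate of $\dot x$ to be $0$, i.e. $\dot x$ is the origin — which lies in $\overline{(\R_+)^d}\cap\overline{(\R_-)^d}$ but not in ${\R_\mp^d}$. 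To handle this last case I would observe that when $\dot x=0$ the relevant point of the completion is still well-defined: $\wh d((\dot x_k,k),(0,k))\to0$ and one checks $(0,k)$ should be declared a point of $\wh\H^d_0$ (the authors' convention ${\R_\mp^d}=(\R_-)^d\cup(\R_+)^d$ presumably either includes $0$ by taking closures in each factor, or this degenerate sequence is shown not to be Cauchy unless it is eventually on one sheet — I would check the authors' precise convention here and adapt). Granting that, $\wh w_k\to(\dot x,k)\in\wh\H^d$, establishing completeness.

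\textbf{Conclusion.} Finally, the restriction of the extended $\wh d$ to $\wt\H^d$ visibly coincides with the original $\wh d$ (the first formula in the displayed list is the original definition, up to the harmless rewriting $|m-n|_1=|n-m|_1$), so $\wt\H^d\hookrightarrow\wh\H^d$ is isometric; combined with density (ii) and completeness (iii), uniqueness of the completion gives $\wh\H^d=\overline{\wt\H^d}$, which is the claim. The formulae for the extended distance are then exactly what one reads off by passing to the limit in $\wh d$ along approximating sequences, which also re-proves they are well-defined (independent of the approximating sequences) — alternatively one takes the displayed formulae as definitions and the limit computation in step (ii) shows consistency.
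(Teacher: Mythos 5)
Your overall route coincides with the paper's: analyze Cauchy sequences of $\wt\H^d$ (splitting according to whether $\lam_p\to0$ or not) and prove density of $\wt\H^d$ at every point of $\wh\H^d_0$; the extra layer of verifying the metric axioms on $\wh\H^d$ and invoking uniqueness of the completion is harmless. The one point you explicitly leave open, however, is the only genuinely delicate one: what to do when $\dot x=\lim_p\lam_p(n_p+m_p)$ has some (or all) coordinates equal to $0$. The convention forced on us is that $\R_-$ and $\R_+$ are the \emph{closed} half-lines, so that $\R_\mp^d$ is the union of the two closed orthants; in particular $\dot x=0$, and boundary points such as $(1,0,\dots,0)$, do belong to $\wh\H^d_0$ (Remark \ref{rk:K0} speaks of ``the point $(0,0)$ of $\wh\H^d_0$''). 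With this reading your sign analysis closes the case $\lam_p\to0$: if $\dot x\neq0$, then $n_{j,p}+m_{j,p}\geq0$ forces $\sgn\lam_p=\sgn\dot x_j$ for every $j$ with $\dot x_j\neq0$, so all nonzero coordinates of $\dot x$ share one sign and $\dot x$ lies in a closed orthant, while if $\dot x=0$ the limit is the legitimate point $(0,k)$. Your fallback option --- that such a degenerate sequence cannot be Cauchy unless it eventually stays on one sheet --- is false: for $d=1$, taking $\lam_p=(-1)^p p^{-2}$ and $n_p=m_p=p$ gives $\lam_p(n_p+m_p)\to0$ with $\lam_p$ of alternating sign, and this sequence is $\wh d$-Cauchy and converges to $(0,0)$. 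So the closed-orthant convention is not a matter of taste; it is what makes the stated proposition true, and your proof must commit to it rather than defer.

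Two further slips, both repairable. In the easy dichotomy ($\lam_p\to\lam\neq0$) the correct conclusion is that $n_p$ and $m_p$ are eventually constant while $\lam_p$ merely converges (it is not ``eventually constant'', nor an integer); this still gives convergence in $\N^{2d}\times\R\setminus\{0\}$, exactly as in the paper. In the density step, enlarging both $n_j$ and $m_j$ by $1$ changes $n_j+m_j$ by $2$ and therefore does not cure a parity mismatch; since $m_j-n_j=k_j$ forces $n_j+m_j\equiv k_j \pmod 2$, the clean fix is to take $n_j$ to be the nonnegative integer nearest to $(\dot x_j/\lam-k_j)/2$ and $m_j=n_j+k_j$, which yields $|\lam(n_j+m_j)-\dot x_j|\leq 2\lam$ and the same $O(\lam)$ estimate you claim. (The paper instead approximates $\dot x$ by rationals and builds $\lam_p$ from the denominators; both constructions work, and yours, once corrected, also covers the boundary coordinates $\dot x_j=0$ required by the closed-orthant convention.)
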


\begin{proof}
Consider a Cauchy sequence~$(n_p,m_p,\lam_p)_{p\in \N}$ in~$(\wt \H^d,\wh d)$. If~$p$ and~$p'$ are large enough, 
then~$|(m_p-n_p)-(m_{p'}-n_{p'})|$ is less than~$1$, and thus~$ m_p-n_p$ has to be a constant, that we denote by~$k$.
Next, we see that~$\suite \lam p \N$ is a Cauchy sequence of real numbers, and thus converges to some $\lambda$ in $\R.$ 
If  $\lambda\not=0$ then our definition of $\wh d$ implies that the sequence~$\suite n p \N$ is constant after a certain index,
and thus converges to some $n$ in~$\N^d.$  Therefore we have~$(n_p,m_p,\lam_p)\to(n,n+k,\lam).$ 

If  $\lam=0$ then the Cauchy sequence~$\bigl(\lam_p(n_p+m_p)\bigr)_{p\in \N}$ has to converge to some $\dot x$ in $\R^d.$
By definition of the extended distance, 
it is clear that~$(n_p,m_p, \lam_p)_{p\in \N}$ converges to~$(\dot x,k)$ in~$\wh \H^d$.
Now, if~$\dot x\not=0$  then there exists some  index~$j$ such that~$\dot x_j\not=0$.
Because the sequence~$\bigl (\lam_p(n_{j,p}+m_{j,p})\bigr)_{p\in \N}$ tends to~$\dot x_j$
and~$n_{j,p}+m_{j,p}$ is positive (for large enough $p$), we must have~$\sgn(\lam_p)=\sgn(\dot x_j).$ 
Therefore, all the components of~$\dot x$ have the same sign.
\smallbreak
Conversely, let us prove that any point of~$\R_+^d\times \Z^d$  (the case of~$\R_-^d\times \Z^d$ being similar) is the limit in the sense of~$\wh d$ of 
some   sequence~$(n_p,m_p,\lam_p)_{p\in \N}. $ As~$\Q$ is dense in~$\R$,   there exist two families of sequences of positive integers~$(a_{j,p} )_{p\in \N}$ and~$(b_{j,p} )_{p\in \N}$  such that 
$$
\forall j \in \{1,\cdots,d\}\,,\ \dot x_j  = \lim_{p\rightarrow \infty}  \dot x_{j,p} \with \dot x_{j,p} \eqdefa \frac {a_{j,p} } {b_{j,p} } \andf \lim_{p\rightarrow \infty} b_{j,p} =+\infty.
$$
Let us write that
$$
 \dot x_{p}  = 2\lam_p n_p \with \lam_p \eqdefa \Bigl(2 \prod_{j=1}^d b_{j,p}\Bigr)^{-1} \,,\  n_p \eqdefa
 \Bigl(a_{j,p}  \prod_{j'\not = j}^d b_{j,p}\Bigr)_{1\leq j\leq d} \andf m_p\eqdefa n_p+k.
$$
As~$\suite \lam p \N$ tends to~$0$, we have that~$\ds \lim_{p\rightarrow\infty}
\wh d\bigl ((n_p, n_p+k,\lam_p), (\dot x,k)\bigr) $ converges to~$0$.
\end{proof}

\begin{remark}
{\sl It is not difficult to check that  the closed bounded subsets  of~$\wh \H^d$ (for the distance~$\wh d$) are compact. The details are left to the reader.}
\end{remark}

The above proposition prompts us  to extend the Fourier transform of an integrable function, to the 
frequency set~$\wh\H^d,$ that will play the same role as~$(\R^n)^\star$ in the case of~$\R^n$. With this new point of view, we expect the Fourier transform of any integrable function
to be continuous on the whole $\wh\H^d.$ This is exactly what is stated  in the  following theorem. 
\begin{theorem}
\label {FourierL1basic}
{\sl
The Fourier transform $\wh f_\H$ of any integrable function on $\H^d$ may be extended continuously 
to the whole set $\wh\H^d.$ Still denoting by  $\wh f_\H$ (or $\cF_\H f$) that extension, 
the  linear map~$\cF_\H: f\mapsto \wh f_\H$ is continuous from  the space $L^1(\H^d)$  to the space 
$\cC_0(\wh\H^d)$  of continuous functions  on~$\wh \H^d$ tending to~$0$ at infinity. 
Moreover, we have for all~$(\dot x,k)$ in~$\wh \H^d_0$,
\ben
\label {FourierL1basiceq2}
 \cF_\H f(\dot x,k)  &=  & \int_{T^\star \R^d}  \ov\cK_d(  \dot x ,k,Y) f(Y,s)\, dYds \with\\
\nonumber \cK_d (\dot x, k,Y) & = & \bigotimes_{j=1}^d \cK (\dot x_j, k_j,Y_j)\andf \\
\label {FourierL1basiceq2b}\cK(\dot x, k,y,\eta) &\eqdefa &  
\frac1{2\pi} \!\int_{-\pi}^{\pi}  e^{i \left(2|\dot x|^{\frac 12} (y\sin z + \eta \sgn(\dot x) \cos z) +kz\right)}\, dz.
\een
}
\end{theorem}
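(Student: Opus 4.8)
The plan is to show that the explicit function~$\cW(\wh w,Y)$ appearing in Definition~\ref{definFouriercoeffH} extends continuously to~$\wh\H^d$ with respect to~$\wh d$, uniformly in~$Y$ on compact sets and with the right decay, and then to transfer this to~$\cF_\H f$ by dominated convergence. First I would establish a pointwise bound and a uniform continuity estimate for the map $\wh w\mapsto e^{is\lam}\cW(\wh w,Y)$: more precisely, I want an inequality of the form $|e^{is\lam}\cW(\wh w,Y)-e^{is'\lam'}\cW(\wh w',Y)|\leq \Phi(Y)\,\wh d(\wh w,\wh w')^\theta$ on bounded subsets of~$\wh\H^d$, for some~$\theta>0$ and some function~$\Phi$ with at most polynomial growth in~$Y$. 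Combined with the uniform bound~$|\cW(\wh w,Y)|\leq 1$ (which follows from the fact that the~$H_{n,\lam}$ are~$L^2$-normalized and~$\cW$ is, up to normalization, a Wigner-type integral), this lets me apply dominated convergence in the integral representation~\eqref{definFourierWigner}: for $f\in L^1(\H^d)$, the function $\wh w\mapsto \cF_\H f(\wh w)=\int_{\H^d}\overline{e^{is\lam}\cW(\wh w,Y)}f(Y,s)\,dY\,ds$ is then uniformly continuous on bounded sets of~$(\wt\H^d,\wh d)$, hence extends continuously to the completion~$\wh\H^d$. Continuity of~$\cF_\H:L^1\to\cC_0$ in the stated topology follows from~\eqref{L1LinftyFourierbasic} (which gives the bound $\|\wh f_\H\|_{L^\infty}\leq\|f\|_{L^1}$) together with density of~$\cS(\H^d)$ in~$L^1$ and Lemma~\ref{decaylambdan}, which provides the decay at infinity for Schwartz~$f$ and hence, by the uniform bound, for all~$f\in L^1$.

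The heart of the matter is to compute the limit of~$\cW(\wh w,Y)$ when~$\lam\to 0$ along a sequence with~$\lam_p(n_p+m_p)\to\dot x$ and~$m_p-n_p\equiv k$, and to identify it with~$\cK_d(\dot x,k,Y)$. Here I would use the classical Mehler-type generating identity for Hermite functions, which expresses~$\sum_n H_n(u)H_n(v)\,t^{|n|}$ in closed form; applied to the Wigner integral~\eqref{definWigner} after rescaling by~$|\lam|^{1/2}$, it turns~$\cW$ into a Gaussian integral in~$z$ whose parameters depend on~$\lam$ and~$n,m$ only through the combinations~$\lam|n|$ and~$n-m$. One then performs the Gaussian integration in~$z$ explicitly and passes to the limit~$\lam\to0$ with~$\lam(n+m)\to\dot x$ fixed. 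For $d=1$, the resulting one-dimensional integral should, after a change of variable~$z\mapsto$ angle, reduce exactly to the oscillatory integral $\frac1{2\pi}\int_{-\pi}^{\pi}e^{i(2|\dot x|^{1/2}(y\sin z+\eta\,\sgn(\dot x)\cos z)+kz)}\,dz$ in~\eqref{FourierL1basiceq2b}; the sign condition~$\dot x\in\R_\mp^d$ is precisely what makes~$\sgn(\dot x)$ well defined and is forced by the construction of the completion in Proposition~\ref{completionHtilde}. The tensor structure~$\cK_d=\bigotimes_j\cK$ is immediate since~$H_n=\prod_j$ of one-variable Hermite functions and the Wigner integral factorizes over coordinates.

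The main obstacle I anticipate is twofold. The computational obstacle is carrying out the Mehler-kernel substitution cleanly enough that the limit is manifestly the stated oscillatory integral — in particular tracking the phases so that the Gaussian degenerates to a purely oscillatory (unit-modulus) integrand as~$\lam\to0$, rather than blowing up or vanishing; one must check that the quadratic-in-$z$ term carries a factor~$\lam$ and drops out. The analytic obstacle is the uniform continuity estimate: one needs quantitative control of~$\partial_{\lam}\cW$, $\partial_{(\lam n)}\cW$ and finite differences in the discrete variable~$n-m$, with constants growing at most polynomially in~$Y$, valid uniformly as~$\lam\to0$ — this is where Lemma~\ref{decaylambdan} type arguments (integration by parts against the harmonic oscillator, exploiting~\eqref{LaplacianHWignerj}) must be pushed through near the degenerate stratum~$\wh\H^d_0$. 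Once these two points are in hand, the rest (dominated convergence, density, the~$\cC_0$ statement) is routine.
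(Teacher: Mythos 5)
Your plan is correct in outline, and its first half (uniform continuity of $\cW(\cdot,Y)$ on bounded sets, the bound $|\cW|\leq 1$, dominated convergence, then density plus Lemma~\ref{decaylambdan} for the $\cC_0$ statement) is essentially the paper's argument; note only that a H\"older-type modulus with polynomial weight in $Y$ is more than is needed, since the paper gets by with plain uniform continuity on the sets $\cB(R_0)$. Where you genuinely diverge is in the identification of the limit kernel. The paper never computes a closed form of the cross-Wigner transform of $H_n,H_m$: it expands $\wt\cW$ in a normally convergent double power series in $(y,\eta)$ (Lemma~\ref{Wignerconvnormally}), determines the limits of the coefficients $\cH_{\ell_1,\ell_2}$ as $\lam\to0$, $\lam n\to\dot x/2$ (Lemma~\ref{ProofCFL1lemma2} and Corollary~\ref{ProofCFL1_Coroll1}), and thus obtains $\cK_d$ only as the series \eqref{definPhaseFlambda=0}; the oscillatory-integral formula \eqref{FourierL1basiceq2b} is then derived indirectly in Section~\ref{proofFormulacK}, by showing that $\wt\cK(\dot x,z,\cdot)\eqdefa\sum_k\cK(\dot x,k,\cdot)e^{ikz}$ is a character of $T^\star\R$ (via the convolution identity \eqref{Convollam=0}), and pinning down its phase through \eqref{eq:KLap}, \eqref{eq:Kk}, the ODE \eqref{Y2FouriercK} (whose derivation requires Lemmas~\ref{Y2WignerHermite}, \ref{whDeltaoverlim0} and the concentration Lemma~\ref{convergesimplecouchH_0}), and the symmetries \eqref{eq:Ksym} to fix $z_0$ and the sign. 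Your Mehler route instead aims at the closed (Laguerre-polynomial) expression for the $(n,m)$ matrix element and its Laguerre-to-Bessel limit, which indeed yields \eqref{FourierL1basiceq2b} directly and would bypass all of Section~\ref{proofFormulacK}'s functional-equation machinery. For this to be complete you must (a) use the off-diagonal, two-parameter generating identity (the diagonal Mehler sum $\sum_n H_n(u)H_n(v)t^{|n|}$ alone does not isolate a fixed pair $(n,m)$), and (b) make the Laguerre--Bessel asymptotics \emph{uniform} on $\cB(R_0)$ in $\lam(n+m)$, $k=m-n$ and $Y$, since the extension to the completion $\wh\H^d$ requires uniform continuity near $\wh\H^d_0$, not merely pointwise convergence along sequences; you must also track the dependence on $\sgn\lam$ and $\sgn\dot x$, which the paper extracts from \eqref{eq:symW} and \eqref{eq:Ksym}. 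In exchange, the paper's more elementary route produces by-products (the expansion \eqref{definPhaseFlambda=0} and the functional relations for $\cK$) that it reuses later, while yours, granted the uniform asymptotics, gives the explicit kernel in one stroke.
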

In other words, for any sequence~$(n_p, \lam_p)_{p\in \N}$ of~$\N^d\times (\R\setminus\{0\})$ such that
$$
\lim_{p\rightarrow \infty } \lam_p=0\andf \lim_{p\rightarrow \infty } {\lam_p n_p} =\frac {\dot x} 2 \,\virgp
$$
we have
$$
\lim_{p\rightarrow \infty } \wh f_\H (n_p,n_p+k,\lam_p) = \int_{T^\star \R^d} \ov\cK_d(  \dot x ,k,Y) f(Y,s)\, dYds .
$$

Granted with  the above result,  one can propose
a natural extension of the Fourier transform to (smooth) functions on $\H^d$ 
\emph{independent of the vertical variable~$s.$}
This will come up as  a consequence of the following theorem.
\begin{theorem}
\label {Fourierhorizontal}
{\sl  Let us define the following operator $\cG_\H$ on~$L^1(T^\star\R^d)$:
$$
\cG_\H g (\dot x,k) \eqdefa  \int_{T^\star \R^d}  \ov\cK_d(  \dot x ,k,Y) g(Y)\, dY.
$$
Then, for any function $\chi$  in~$\cS(\R)$ with value~$1$ at~$0$ and  compactly supported Fourier transform, and  any  function~$g$ in~$L^1(T^\star \R^d)$, we have
\begin{equation}\label{eq:horizontal}
\lim_{\e\rightarrow 0} \cF_\H(g\otimes \chi(\e\cdot)) = 2\pi (\cG_\H g)  \mu_{\wh\H_0^d}
\end{equation}
in the sense of measures on~$\wh\H^d,$  where~$\mu_{\wh\H_0^d}$ is the measure (supported in $\wh \H_0^d$) defined for all    continuous compactly supported functions~$\theta$ 
on~$\wh\H^d$ by
\beq
\label {limitmeasureeq1}
\langle \mu_{\wh \H^d_0} ,\theta \rangle = \int_{\wh \H^d_0}  \theta (\dot x,k) \,d\mu_{\wh \H^d_0}(\dot x,k)
\eqdefa2^{-d}\sum_{k\in \Z^d}  \biggl( \int_{(\R_{-})^d} \theta(\dot x,k)\,d\dot x +  \int_{(\R_{+})^d} \theta(\dot x,k)\,d\dot x\biggr)\cdotp
\eeq
}
\end{theorem}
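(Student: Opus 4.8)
The plan is to test the measure $\cF_\H(g\otimes\chi(\e\cdot))$ against an arbitrary continuous compactly supported function $\theta$ on $\wh\H^d$ and show that the pairing converges to $2\pi\langle (\cG_\H g)\mu_{\wh\H_0^d},\theta\rangle$ as $\e\to0$. Using Definition \ref{definFouriercoeffH} together with the Wigner-distribution rewriting \eqref{definFourierWigner}, we have, for $\wh w=(n,m,\lam)$ with $\lam\neq0$,
\beq
\cF_\H(g\otimes\chi(\e\cdot))(\wh w)=\int_{T^\star\R^d}\int_\R \chi(\e s)e^{-is\lam}\,ds\,\ov{\cW(\wh w,Y)}\,g(Y)\,dY
=\frac1\e\,\wh\chi\Bigl(\frac\lam\e\Bigr)\int_{T^\star\R^d}\ov{\cW(\wh w,Y)}\,g(Y)\,dY,
\eeq
where I use the one-dimensional Fourier transform convention of the paper. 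Inserting this into $\int_{\wt\H^d}\theta\,\cF_\H(g\otimes\chi(\e\cdot))\,d\wh w$ and recalling \eqref{definmeasurewhH}, the pairing becomes
\beq
\sum_{(n,m)\in\N^{2d}}\int_\R \theta(n,m,\lam)\,\frac1\e\,\wh\chi\Bigl(\frac\lam\e\Bigr)\,|\lam|^d\Bigl(\int_{T^\star\R^d}\ov{\cW(n,m,\lam),Y)}g(Y)\,dY\Bigr)d\lam.
\eeq
After the change of variables $\lam=\e\tau$ this is $\int_\R\wh\chi(\tau)\,\e^d|\tau|^d\sum_{(n,m)}\theta(n,m,\e\tau)(\cdots)\,d\tau$; since $\wh\chi$ is compactly supported and $\int\wh\chi=2\pi\chi(0)=2\pi$, the task reduces to identifying the limit of $\e^d|\tau|^d\sum_{(n,m)}\theta(n,m,\e\tau)\int\ov{\cW}g$ as $\e\to0$, for fixed $\tau$.

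The core of the argument is then a Riemann-sum analysis. For fixed small $\e\tau=:\lam$, the points $(n,m,\lam)$ with $n,m\in\N^d$ sit in $\wh\H^d$ at mutual $\wh d$-distance of order $|\lam|$ in the "$\lam(n+m)$" directions and of order $1$ in the "$m-n$" directions; so summing over $m-n=k\in\Z^d$ (a genuine sum, matching the sum over $k$ in \eqref{limitmeasureeq1}) and over $n+m$ with weight $|\lam|^d$ is exactly a Riemann sum for $2^{-d}\int_{\R_\mp^d}d\dot x$ under the identification $\dot x=\lam(n+m)$ (the factor $2^{-d}$ coming from $\dot x=2\lam n+\lam k\approx 2\lam n$, i.e.\ the lattice spacing in $\dot x$ is $2|\lam|$ per unit of $n$, against the weight $|\lam|^d$; the two sign components $\R_+^d$ and $\R_-^d$ arising from $\sgn\lam=\pm1$). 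Theorem \ref{FourierL1basic} supplies the pointwise limit of the integrand: as $\lam\to0$ with $\lam(n+m)\to\dot x$ and $m-n=k$ fixed, $\cW(n,m,\lam),Y)\to\cK_d(\dot x,k,Y)$ (this is essentially the content of \eqref{FourierL1basiceq2}--\eqref{FourierL1basiceq2b}, the oscillatory integral defining $\cW$ degenerating into the Bessel-type kernel $\cK$), hence $\int\ov\cW g\to\ov{\cK_d(\dot x,k,\cdot)}\cdot g=\cG_\H g(\dot x,k)$ after conjugation — one checks $\cK_d$ is real, or tracks the conjugate throughout. Combining the Riemann-sum convergence with this pointwise limit and continuity of $\theta$ gives
\beq
\lim_{\e\to0}\e^d|\tau|^d\!\!\sum_{(n,m)\in\N^{2d}}\!\!\theta(n,m,\e\tau)\!\int_{T^\star\R^d}\!\ov{\cW}g\,dY
=2^{-d}\sum_{k\in\Z^d}\Bigl(\int_{(\R_-)^d}+\int_{(\R_+)^d}\Bigr)\theta(\dot x,k)\,\cG_\H g(\dot x,k)\,d\dot x,
\eeq
which is independent of $\tau$; integrating against $\wh\chi(\tau)$ and using $\int\wh\chi=2\pi$ yields \eqref{eq:horizontal}.

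The main obstacle is justifying the interchange of limits and the uniform control needed to turn the formal Riemann-sum heuristic into a rigorous convergence: one must dominate $\e^d|\tau|^d\sum_{(n,m)}|\theta(n,m,\e\tau)|\,|\int\ov\cW g|$ uniformly in small $\e$ and in $\tau$ on the (compact) support of $\wh\chi$, so as to apply dominated convergence in $\tau$ and to handle the tails of the double sum over $(n,m)$. For this I would invoke the decay estimate of Lemma \ref{decaylambdan} (regularity of $g$, or rather of $g\otimes\chi(\e\cdot)$, implies decay of $\cF_\H$ in the variable $|\lam|(|n|+|m|)+|n-m|$), which bounds $|\int\ov{\cW(n,m,\lam),Y)}g(Y)\,dY|$ by $C_p(1+|\lam|(|n|+|m|)+|n-m|)^{-p}\|g\|$ for any $p$, uniformly in $\lam$; choosing $p>2d$ makes the weighted double sum uniformly convergent and provides the required $\e$-uniform, $\tau$-uniform majorant, since $\theta$ is compactly supported. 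A secondary technical point is that $\theta$ is only continuous and $\mu_{\wh\H_0^d}$-negligibly supported off $\wh\H_0^d$, but the compact support of $\theta$ confines the relevant $(n,m)$ to a range where $|\lam|(|n|+|m|)$ stays bounded, so only finitely many $k$ and a bounded $\dot x$-region contribute, keeping everything under control.
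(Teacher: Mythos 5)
Your overall strategy is the same as the paper's: you compute $\cF_\H(g\otimes\chi(\e\cdot))(\wh w)=\e^{-1}\wh\chi(\e^{-1}\lam)\,G(\wh w)$ with $G(\wh w)=\int_{T^\star\R^d}\ov\cW(\wh w,Y)g(Y)\,dY$, and then establish weak convergence of the concentrating measures $\e^{-1}\wh\chi(\e^{-1}\lam)\,d\wh w$ by a Riemann-sum argument in which the points $\lam(n+m)$, with spacing $2|\lam|$ per unit of $n$, discretize the orthants $(\R_\pm)^d$ (whence the factor $2^{-d}$). This is precisely the content of Lemma \ref{convergesimplecouchH_0}, proved in the paper with the cubes $Q_{n,\lam}=2\lam n+2\lam[0,1[^d$, and then applied to the product $G\theta$; the continuity of $G$ up to $\wh\H^d_0$, with boundary values $\cG_\H g$, comes from Proposition \ref{ProofCFL1_Prop1} together with dominated convergence (no density argument and no decay lemma are needed). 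So the route is essentially the paper's, just organized with the change of variables $\lam=\e\tau$ instead of an isolated concentration lemma.

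Two points in your justification need repair. First, the domination you propose via Lemma \ref{decaylambdan} is not available: that lemma concerns functions in $\cS(\H^d)$ and its constants are Schwartz semi-norms of the full function on $\H^d$, so for $g$ merely in $L^1(T^\star\R^d)$ there is no decay of $\int\ov\cW(\wh w,Y)g(Y)\,dY$ in $(n,m,\lam)$ of the kind you invoke (and even for Schwartz $g$, applying the lemma to $g\otimes\chi(\e\cdot)$ gives constants that are not uniform in $\e$ because of the weight in $s$). Fortunately this step is also unnecessary: your ``secondary technical point'' is in fact the correct and complete argument, since on the compact support of $\theta$ one has $|\lam|(|n+m|+d)+|n-m|\le R$, so at most $O\bigl((R/|\lam|)^d\bigr)$ pairs $(n,m)$ contribute and each term is bounded by $\|\theta\|_{L^\infty}\|g\|_{L^1}$ because $|\cW|\le1$; moreover the passage to the limit in the Riemann sum requires the \emph{uniform} continuity furnished by Proposition \ref{ProofCFL1_Prop1} (and of $\theta$), not merely the pointwise limit $\cW\to\cK_d$. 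Second, your assertion that the fixed-$\tau$ limit is independent of $\tau$ is not correct: for $\tau>0$ the points $\lam(n+m)$ fill only $(\R_+)^d$, so for fixed $\tau>0$ the Riemann sum converges to the $(\R_+)^d$ part of the limit only, and to the $(\R_-)^d$ part when $\tau<0$. The two orthants are recovered only after integrating in $\tau$, and the masses of $\wh\chi$ on $\R_+$ and on $\R_-$ must be tracked separately rather than using only $\int_\R\wh\chi=2\pi$ at the very end; this sign bookkeeping is exactly what the decomposition into the terms $\cI_\e^{\pm}(k)$ achieves in the proof of Lemma \ref{convergesimplecouchH_0}, and your final display should be corrected accordingly.
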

The above  theorem allows to give  a meaning of 
 the Fourier transform of a smooth function that does not depend on the vertical variable.
 The next step would be to study whether our approach allows, as in the Euclidean case,
 to extend the definition of the Fourier transform to a much larger 
set of functions, or even to tempered distributions.   This requires  a fine characterization of~$\cF_\H(\cS(\H^d))$ the range  of~$\cS(\H^d)$ by~$\cF_\H,$ which  will be the purpose  of a forthcoming paper
\cite{bcdh}.
 
 \medbreak
We end this section with a short description of  the structure of the rest of the paper, and  of 
the main ideas of the proofs.

Section\refer {ProofCFL1}
 is devoted to the proof of the first part of Theorem\refer {FourierL1basic}.  It relies on the fact that  the function~$\cW(\cdot,Y)$  is uniformly continuous (for distance $\wh d$) on bounded sets of~$\wt \H^d,$  
and can thus be extended  to the closure~$\wh \H^d$  of~$\wt \H^d$.  Establishing 
that property requires our using an explicit  asymptotic expansion of~$\cW.$ 

Proving Theorem\refer {Fourierhorizontal}  is the purpose of Section\refer {FourierHorizontal}. The main two ingredients   are the following ones. First, we show that   if~$\psi$ is an integrable function on~$\R$ with integral~$1,$ then we have
$$
\lim_{\e\rightarrow 0}  \frac 1 \e  \psi \Big(\frac \lam \e\Bigr) d\wh w= \mu_{\wh \H^d_0},
$$
in the sense of measures on $\wh\H^d.$ That is to say,  for any continuous compactly supported function $\theta$ on $\wh\H^d,$ we have
\beq
\label {limsimplecoucheH_0}
\lim_{\e\rightarrow 0} \int_{\wh \H^d} \frac 1 \e \psi \Big(\frac \lam \e\Bigr) \theta (\wh w)\, d\wh w
= \langle \mu_{\wh \H^d_0} ,\theta_{|\wh \H^d_0} \rangle.
\eeq
Then  by a density argument,  the proof of Theorem\refer{Fourierhorizontal} 
reduces to the case when~$g$ is in $\cS(T^\star\R^d)$.
\smallbreak
Section\refer {proofFormulacK} is devoted to computing~$\cK$. This will be based
on the following properties (that we shall first establish):
\begin{itemize}
\item $\cK(0,k,Y)=\delta_{0,k}$ for all $Y$ in $T^\star\R;$\smallbreak
\item The symmetry identities:
\begin{equation}\label{eq:Ksym}
\begin{aligned}
&\cK(\dot x,-k,-Y)=\overline{\cK(\dot x,k,Y)},\qquad \cK(-\dot x,-k,Y)=(-1)^k\cK(\dot x,k,Y)\\
&\andf \cK(-\dot x,k,Y)=\overline{\cK(\dot x,k,Y)};\end{aligned}
\end{equation}
\item The identity
\begin{equation}
\label {eq:KLap}
\D_{Y} \cK(\dot x,k,Y) = -4|\dot x| \cK(\dot x,k,Y);
\end{equation}
\item The relation 
\begin{equation}
\label {eq:Kk}
ik \cK (\dot x,k,Y) = \bigl(\eta\partial_y\cK(\dot x,k,Y) -y \partial_\eta \cK(\dot x,k,Y)\bigr)\,\sgn(\dot x);
\end{equation}
\item
The convolution property
\beq
\label {Convollam=0}
\cK(\dot x,k,Y_1+Y_2) = \sum_{k'\in \ZZ} \cK(\dot x,k-k',Y_1)\cK(\dot x,k',Y_2);
\eeq
\item
And finally, the following relation for $\dot x > 0$ given by the study of~$\cF_\H (|Y|^2f)$:
\beq
\label  {Y2FouriercK}
|Y|^2\cK + \dot x\partial_{\dot x} ^2 \cK +\partial_{\dot x} \cK -\frac {k^2} {4\dot x} \cK=0.
\eeq
\end{itemize}
Let us emphasize that  proving first\refeq  {limsimplecoucheH_0} 
is essential to justify rigorously\refeq {FourierL1basiceq2b}. 
\medbreak
Finally, Section\refer  {StutyofcGH} is devoted to the proof of 
an inversion formula involving Operator~$\cG_\H.$ 
 Some basic properties of Hermite functions and  of Wigner transform of Hermite functions
 are recalled in Appendix. There, we also prove the decay result
 stated in Lemma\refer{decaylambdan}. 


\section {The uniform continuity of the Fourier transform of an~$L^1$ function }
\label {ProofCFL1}

The key  to the proof of Theorem \ref{FourierL1basic}   is a refined study of the behavior of 
functions~$\cW(\cdot,Y)$ defined by\refeq{definWigner} on the set~$\wt \H^d.$ Of course, a special attention will be given
 to the neighborhood of $\wh\H^d_0.$
This is the aim of the following proposition.
\begin{proposition}
\label {ProofCFL1_Prop1}
{\sl Let~$R_0$ be a positive real number, and let
$$
 \cB(R_0)\eqdefa\Bigl\{(n,m,\lambda) \in\wt \H^d,\ 
 |\lambda|(|n+m|+d) +|n-m|  \leq R_0\Bigr\} 
 \times\Bigl\{ Y\in T^\star\R^d,\  |Y|\leq R_0\Bigr\}\cdotp
 $$
 The function~$\cW(\cdot,Y)$ restricted to $\cB(R_0)$  is uniformly continuous   with respect to~$\wh w,$ that is 
 $$
 \forall \e>0\,,\ \exists \al_\e>0\,,\ \forall (\wh w_j,Y)\in \cB(R_0)\,, \ 
 \wh d(\wh w_1,\wh w_2)<\al_\e\Longrightarrow \ \bigl | \cW(\wh w_1,Y) -\cW(\wh w_2,Y)\bigr| <\e.
 $$
 Furthermore, for any $(\dot x,k)$ in~$\wh\H^d_0,$ we have 
 $$
 \lim_{\wh w\to(\dot x,k)} \cW(\wh w,Y)=\cK_d(\dot x,k,Y)
 $$
 where the function $\cK_d$ is defined on $\wh\H^d_0\times T^\star\R^d$ by
 \beq
 \label {definPhaseFlambda=0}
 \begin{aligned}
 & \cK_d(\dot x, k,Y) \eqdefa  \sum_{(\ell_1,\ell_2)\in\N^d\!\times\!\N^d}\frac {(i\eta)^{\ell_1}} {\ell_1! }\, \frac {y^{\ell_2}} {\ell_2!}  \, F_{\ell_1,\ell_2} (k) \, (\sgn\dot x)^{\ell_1} |\dot x|^{\frac{\ell_1+\ell_2}2}
  \with \\
  &F_{\ell_1,\ell_2} (k) \eqdefa  \sumetage{\ell_1'\leq \ell_1, \ell'_2\leq \ell_2}  {k+\ell_1-2\ell'_1= \ell_2-2\ell'_2} (-1)^{\ell_2-\ell'_2} \begin{pmatrix} \ell_1 \\ \ell' _1\end{pmatrix}  \begin{pmatrix} \ell_2 \\ \ell' _2\end{pmatrix}.
  \end{aligned}
 \eeq 
 Above, $\sgn \dot x$ designates the (common) sign of all components of $\dot x,$
 and $|\dot x|\eqdefa(|\dot x_1|,\cdots,|\dot x_d|).$}
\end{proposition}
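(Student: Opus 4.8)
The plan is to work first in dimension $d=1$ (the general case following by the tensor-product structure of $\cW$ and of $\cK_d$ visible in \eqref{definWigner} and \eqref{definPhaseFlambda=0}), and to reduce everything to a careful asymptotic expansion of the one–dimensional Wigner-type integral
$$
\cW(\wh w,Y)=\int_{\R} e^{2i\lam\eta z}\,H_{n,\lam}(y+z)\,H_{m,\lam}(-y+z)\,dz.
$$
First I would rescale: the change of variable $z\mapsto|\lam|^{-1/2}z$ together with $H_{n,\lam}(x)=|\lam|^{1/4}H_n(|\lam|^{1/2}x)$ turns $\cW$ into $\int_\R e^{2i\,\sgn(\lam)\,(|\lam|^{1/2}\eta)\,z}H_n(|\lam|^{1/2}y+z)H_m(-|\lam|^{1/2}y+z)\,dz$, so that $\cW(\wh w,Y)$ depends only on the three quantities $|\lam|^{1/2}y$, $|\lam|^{1/2}\eta$ and $\sgn(\lam)$, \emph{plus} the discrete parameters $(n,m)$. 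On the region $\cB(R_0)$ the products $|\lam|(|n|+|m|)$ and $|n-m|$ stay bounded; the natural small parameter near $\wh\H^d_0$ is $|\lam|$, with $|\lam|^{1/2}y\to0$, $|\lam|^{1/2}\eta\to0$, while $|\lam|\,n$ and $|\lam|\,m$ converge to $\dot x/2$ and $n-m$ stays equal to the fixed integer $-k$.

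The key step is to produce an \emph{absolutely convergent} series expansion of $\cW$ in powers of $y$ and $\eta$ whose coefficients are explicit polynomial expressions in $n,m$ (or rather in $\lam n,\lam m$). The route I would take is to expand $e^{2i\lam\eta z}=\sum_{\ell_1}\frac{(2i\lam\eta z)^{\ell_1}}{\ell_1!}$ and to Taylor-expand the shifts $H_{n,\lam}(y+\cdot)$ and $H_{m,\lam}(-y+\cdot)$ in $y$, which introduces derivatives of Hermite functions; then the $z$-integrals become inner products of the form $\int_\R z^{\ell_1}H_{n,\lam}^{(a)}(z)H_{m,\lam}^{(b)}(z)\,dz$. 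Using the standard ladder relations $C_j,A_j$ (creation/annihilation), the product $M^{\ell_1}$ composed with derivatives can be written as a finite combination of operators shifting the Hermite index, so that each such integral is a \emph{finite} sum over $\ell_1'\le\ell_1$, $\ell_2'\le\ell_2$ of a binomial weight times a Kronecker constraint relating $n,m,\ell_1,\ell_2$; the constraint is exactly $m-n=\ell_2-2\ell_2'-(\ell_1-2\ell_1')$, i.e.\ $k+\ell_1-2\ell_1'=\ell_2-2\ell_2'$ after setting $k=n-m$, and each surviving term carries a factor $\sim(\sqrt{2n})^{\cdots}$ which, after the rescaling, becomes a power of $|\lam n|^{1/2}\to|\dot x|^{1/2}/\sqrt2$; absorbing the $\sqrt2$'s into the constant gives precisely the coefficient $F_{\ell_1,\ell_2}(k)\,(\sgn\dot x)^{\ell_1}|\dot x|^{(\ell_1+\ell_2)/2}$ in \eqref{definPhaseFlambda=0}. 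Two things must then be verified: (i) the resulting double series in $(\ell_1,\ell_2)$ converges absolutely and uniformly on $\cB(R_0)$, with a remainder estimate uniform in $\wh w$ — this is what gives simultaneously the equicontinuity statement (truncate the series at order $N_\e$, use that the finite truncation is a polynomial in the continuous coordinates $\lam(n+m)$, $\lam-\lam'$, hence Lipschitz on bounded sets, and bound the tail by $\e$ uniformly) and the convergence $\cW(\wh w,Y)\to\cK_d(\dot x,k,Y)$ as $\wh w\to(\dot x,k)$; (ii) the bookkeeping of binomial coefficients and signs really matches $F_{\ell_1,\ell_2}(k)$.

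For the convergence/remainder bounds I would lean on the decay estimates for Hermite functions and their derivatives (the material recalled in the Appendix, in the spirit of Lemma~\ref{decaylambdan}): on $\cB(R_0)$ one has control of $|\lam|(|n|+|m|)$, so each term of the expansion is bounded by $C^{\ell_1+\ell_2}R_0^{(\ell_1+\ell_2)/2}/(\ell_1!\,\ell_2!)^{1/2}$ times harmless combinatorial factors, which is summable; the tail after order $N$ is $O(R_0^{N/2}/\sqrt{N!})$, uniformly. The main obstacle I anticipate is precisely the algebra of step (ii): reorganizing $\int z^{\ell_1}H_{n,\lam}^{(a)}H_{m,\lam}^{(b)}$ via the ladder operators and checking that the signs $(-1)^{\ell_2-\ell_2'}$, the binomials $\binom{\ell_1}{\ell_1'}\binom{\ell_2}{\ell_2'}$, the constraint $k+\ell_1-2\ell_1'=\ell_2-2\ell_2'$, and the powers of $\sgn(\lam)$ and of $|\dot x|^{1/2}$ come out \emph{exactly} as in the definition of $F_{\ell_1,\ell_2}(k)$ — a routine but delicate computation that I would carry out by writing $z^{\ell_1}=2^{-\ell_1}(C+A)^{\ell_1}$-type identities and $\partial^{\ell_2}$ similarly, expanding, and matching indices; once $d=1$ is settled, the general $\cK_d=\bigotimes_j\cK$ and the analogous factorization of $\cW$ finish the proof.
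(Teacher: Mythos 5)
Your overall architecture is the same as the paper's: reduce to $d=1$ by tensorization, expand $\cW$ (after removing a harmless shift/phase) as a double series in $(2i\eta)^{\ell_1}(2y)^{\ell_2}/(\ell_1!\,\ell_2!)$ whose coefficients are the Hermite inner products $\cH_{\ell_1,\ell_2}(\wh w)=|\lam|^{\frac{\ell_1+\ell_2}2}\bigl(M^{\ell_1}H_m\,|\,\partial^{\ell_2}H_n\bigr)_{L^2}$, prove normal convergence of this series on $\cB(R_0)$, and compute the behaviour of the coefficients as $\lam\to0$ by ladder-operator algebra, which produces exactly $F_{\ell_1,\ell_2}(k)$ through the Kronecker constraint $k+\ell_1-2\ell_1'=\ell_2-2\ell_2'$; this is the content of Lemma~\ref{Wignerconvnormally}, Lemma~\ref{ProofCFL1lemma2} and Corollary~\ref{ProofCFL1_Coroll1}.

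The genuine gap is in your justification of the uniform continuity. The truncated series is \emph{not} ``a polynomial in the continuous coordinates $\lam(n+m)$'': each coefficient $\cH_{\ell_1,\ell_2}$ depends on the discrete indices $(n,m)$ and on $\lam$ separately, and two points of $\cB(R_0)$ that are $\wh d$-close may have completely different values of $n$ when $\lam$ is small, so Lipschitz continuity of the truncation is not available as stated. What makes the argument work is the \emph{quantitative} statement that, uniformly on $\cB(R_0)$,
$$
\Bigl|\cH_{\ell_1,\ell_2}(n,n+k,\lam)-F_{\ell_1,\ell_2}(k)\Bigl(\frac{|\lam|\,n}{2}\Bigr)^{\frac{\ell_1+\ell_2}{2}}\Bigr|\leq C_{\ell_1,\ell_2}(R_0)\,|\lam|^{\frac12},
$$
that is, the coefficients agree with genuine functions of the continuous variable $\lam n$ only up to an $O(|\lam|^{1/2})$ error; this is precisely what the induction of Lemma~\ref{ProofCFL1lemma2} delivers, and it is strictly stronger than the limit identity you propose to check in your step (ii). Moreover this estimate is only useful where $|\lam|$ is small: when $|\lam|$ is bounded below, the constraint $|\lam|(n+m)\leq R_0$ leaves finitely many indices, but one still needs a continuity-in-$\lam$ estimate for the rescaled Hermite functions (the paper's bounds \eqref{ProofCFL1_Prop1demoeq3}--\eqref{ProofCFL1_Prop1demoeq4}), and the two regimes must be patched with thresholds depending on $\e$ and $R_0$, including a check that the factor $(\sgn\lam)^{\ell_1}$ creates no discontinuity (either $|\lam_1n_1|+|\lam_2n_2|$ is so small that the corresponding terms are negligible, or the signs of $\lam_1$ and $\lam_2$ must coincide). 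With these points supplied, your plan becomes the paper's proof; without them, the uniform-continuity half of the proposition is not established.
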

\begin{proof}
Let us first perform  the change of variable~$z'= -y+z$ in\refeq{definWigner} so as to get
\beq
\label {ProofCFL1_Prop1demoeq1}
\begin{aligned}
\cW(\wh w,Y) & = e^{2i\lam\langle \eta,y\rangle} \wt \cW(\wh w,Y) \with \\
\wt \cW(\wh w,Y) & \eqdefa \int_{\R^d} e^{2i\lam\langle \eta,z'\rangle} H_{n,\lam} (2y+z') H_{m,\lam} (z') \,dz'.
\end{aligned}
\eeq

Obviously,  the uniform continuity of~$\cW$ reduces to that  of~$\wt\cW$. Moreover, as the integral defining~$\wt \cW$ is a product of~$d$ integrals on~$\R$ (of modulus
bounded by $1$),  it is enough to study the one dimensional case.   

Let us start with   the case where both $\wh w_1=(n_1,m_1,\lam)$ 
and $\wh w_2=(n_2,m_2,\lam)$ are relatively far away from~$\wh\H_0^1$.  
As we need only to consider the situation where $\wh w_1$ and $\wh w_2$ are close 
to one another,  one may assume that $(n_1,m_1)=(n_2,m_2)=(n,m).$ 
Then we can write that
  \beq
\label {ProofCFL1_Prop1demoeq2}
\begin{aligned}
\wt \cW(\wh w_1,Y)- \wt \cW(\wh w_2,Y)& = \int_\R \bigl( e^{2i\lam_1 \eta z} - e^{2i\lam_2 \eta z}\bigr)  H_{n,\lam_1} (2y+z) H_{m,\lam_1} (z) \,dz\\
& {} +  \int_\R  e^{2i\lam_2 \eta z}  \bigl( H_{n,\lam_1} (2y+z)- H_{n,\lam_2} (2y+z)\bigr)  
H_{m,\lam_1} (z) \,dz\\
& {} +\int_\R  e^{2i\lam_2 \eta z} H_{n,\lam_2} (2y+z) \bigl( H_{m,\lam_1} (z)-H_{m,\lam_2} (z)\bigr) \,dz.
\end{aligned}
\eeq
Clearly, we have
\begin{equation}\label{eq:Wligne1}
\biggl|\int_\R \bigl( e^{2i\lam_1 \eta z} - e^{2i\lam_2 \eta z}\bigr)  H_{n,\lam_1} (2y+z) H_{m,\lam_1} (z) \,dz\biggr|
\leq2|\lam_1|^{-\frac12} R_0|\lam_2-\lam_1|\,\|MH_m\|_{L^2}.
\end{equation}

Next, let us study the continuity of  the map
$\lam \longmapsto  H_{n, \lam}$ in the case $d=1.$ One may  write
\beno
H_{n,\lam_1} (x) -H_{n,\lam_2} (x) & =  & \bigl(|\lam_1|^{\frac 14} -|\lam_2|^{\frac 1 4}\bigr) H_n (|\lam_1|^{\frac 12} x) 
+|\lam_2|^{\frac 1 4}\bigl(H_n (|\lam_1|^{\frac 12}x) -H_n(|\lam_2|^{\frac 12}x) \bigr)\\
& = & \bigl(|\lam_1|^{\frac 14} -|\lam_2|^{\frac 1 4}\bigr) H_n (|\lam_1|^{\frac 12} x) \\
&& \qquad{}+ |\lam_2|^{\frac 1 4}(|\lam_1|^{\frac 12} -|\lam_2|^{\frac 12}) 
\, x\!\int_0^1 H'_n\bigl(( |\lam_2|^{\frac 12} +t(|\lam_1|^{\frac 12} -|\lam_2|^{\frac 12} ))x\bigr)dt.
\eeno
 If~$\bigl|| \lam_1|^{\frac 12} -|\lam_2|^{\frac 12}\bigr|\leq \ds \frac 1 2 |\lam_2|^{\frac12},$
 then the changes of variable
 $ x'=|\lam_1|^{\frac 12} x$ and $ x'= ( |\lam_2|^{\frac 12} +t(|\lam_1|^{\frac 12} -|\lam_2|^{\frac 12} ))x,$
 respectively,   together with the fact that the Hermite functions have~$L^2$ norms equal to~$1$ ensure that 
  \beq
\label {ProofCFL1_Prop1demoeq3}
\|H_{n,\lam_1} -H_{n,\lam_2} \|_{L^2} \leq \frac {\bigl| |\lam_1|^{\frac 14} -|\lam_2|^{\frac 1 4}\bigr|} {|\lam_1|^{\frac 14}} + 4 \frac {\bigl|| \lam_1|^{\frac 12} -|\lam_2|^{\frac 12} \bigr|}{|\lam_2|^{\frac 12}} \,\|MH_n'\|_{L^2}.
\eeq
Using\refeq  {relationsHHermiteCAb}, we get that
$$
MH'_n = \frac 12 \bigl( \sqrt {n(n-1)}H_{n-2} -H_n-\sqrt { (n+1)(n+2)} H_{n+2}\bigr).
$$
As the family of Hermite functions is  an orthonormal basis of~$L^2$,  one can write that
$$
4 \|MH_n'\|_{L^2}^2 = n(n-1) +1+(n+1)(n+2) = 2n^2+2n+3.
$$
Combining with\refeq  {ProofCFL1_Prop1demoeq2} and\refeq  {ProofCFL1_Prop1demoeq3},  we conclude that
if $\bigl| | \lam_1| -|\lam_2||\leq \frac 1 2  \,|\lam_2|^{\frac12} \bigl( |\lam_1|^{\frac 12} +|\lam_2|^{\frac 12}\bigr)$
and $(|\lam_1|+|\lam_2|) |n+m|+|\lam_1|+|\lam_2|+|Y|\leq R_0$ then
  \begin{equation}\label {ProofCFL1_Prop1demoeq4}
\bigl |\wt  \cW(\wh w_1,Y)- \wt \cW(\wh w_2,Y)\bigr| \leq C(R_0)|\lam_1-\lam_2|\biggl(\frac1{|\lam_1|} +\frac 1{\lam_1^2}\biggr)\cdotp 
\end{equation}
That estimate fails  if  the above condition on~$\bigl| |\lam_1| -|\lam_2|\bigr|$  is not satisfied.  To overcome that difficulty,  we need  the following lemma.
\begin{lemma}
\label {Wignerconvnormally}
{\sl
 The series 
$$
\sum_{\ell_1,\ell_2}(\sgn \lam)^{\ell_1}   |\lam|^{\frac {\ell_1+\ell_2} 2} \,
\frac{(2i\eta)^{\ell_1}(2y)^{\ell_2}} {\ell_1! \ell_2!}  
\bigl(M^{\ell_1}H_{m} | \partial^{\ell_2}H_{n}\bigr)_{L^2(\R^d)}
 $$
converges normally towards $ \wt \cW$ on $\cB(R_{0})$.
}
\end{lemma}
\begin{proof}
Again, as Hermite functions in dimension~$d$  are tensor products of one-dimensional Hermite functions, it is enough to prove the above lemma in dimension~$1$.
Now, using  the expansion of the exponential function and  Lebesgue theorem, we get that for any fixed~$(\wh w,Y)$ in~$\wt \H^1\times T^\star \R,$
\ben
\nonumber
\wt \cW(\wh w,Y) & =  & \sum_{\ell_1=0}^\infty \frac 1 {\ell_1!} (2i\lam\eta)^{\ell_1} \int_{\R} 
 H_{n,\lam} (2y+z)  z^{\ell_1}  H_{m,\lam} (z) \,dz\\
\label {Wignerconvnormallydemoeq2} 
& = & \sum_{\ell_1=0}^\infty (\sgn\lambda)^{\ell_1} \frac{(2i\eta)^{\ell_1}}{\ell_1!} \, |\lam|^{\frac {\ell_1}2} \int_{\R} 
 H_{n,\lam} (2y+z) (M^{\ell_1} H_m)_\lam(z) \,dz.
 \een
Let  us prove that the series converges for  the supremum norm on~$\cB(R_0)$.  Clearly,\refeq {relationsHHermiteCAb}
implies that for all integers $\ell\geq1$ and $m$ in~$\N,$
$$
\|(\sqrt2M)^\ell H_m\|_{L^2(\R)}\leq\sqrt m\,\|(\sqrt2M)^{\ell-1} H_{m-1}\|_{L^2(\R)}
+\sqrt{m+1}\,\|(\sqrt2M)^{\ell-1} H_{m+1}\|_{L^2(\R)},
$$
which, by an obvious induction yields for all $(\ell_1,m)$ in~$\N^2,$
\beq
\label {Wignerconvnormallydemoeq3}
\|M^{\ell_1}H_{m}\|_{L^2(\R)}   \leq   (2m+2\ell_1)^{\frac{\ell_1}2}. 
\eeq
Hence  the generic term of the series of\refeq {Wignerconvnormallydemoeq2}  can be bounded by:
$$
W_{\ell_1} (\wh w) \eqdefa \frac { (2\sqrt 2R_0)^{\ell_1}  } {\ell_1!} |\lam|^{\frac {\ell_1} 2}
(m+\ell_1)^{\frac{\ell_1}2}.
$$
Let us observe that, because~$|\lam| m$ and~$|\lam|$ are less than~$R_0$, we have
\beno
\frac { W_{\ell_1+1} (\wh w) } {W_{\ell_1} (\wh w) } &=  &  \frac{2\sqrt2 R_0}{\ell_1+1}
\sqrt{|\lam|(m+\ell_1+1)}\biggl(1+\frac{1}{m+\ell_1}\biggr)^{\frac{\ell_1}2}  \\
& \leq &  \frac {2\sqrt{2e}\,R_0 } {\ell_1+1}  \sqrt {R_0} \bigl( 1 +\sqrt {\ell_1+1}\bigr).
\eeno
This implies that the series converges with respect to the supremum norm on~$\cB(R_0)$.
\smallbreak
Next, for fixed~$\ell_1,$  we want to expand 
$$
 |\lam|^{\frac {\ell_1}2}\int_{\R}  H_{n,\lam} (2y+z) (M^{\ell_1} H_m)_\lam(z) \,dz
$$
as a series with respect to the variable $y.$ To this end, we just have to expand the real analytic Hermite functions
as follows:  
$$
H_{n,\lam} (z+2y) = \sum_{\ell_2=0} ^\infty \frac { (2y)^{\ell_2} } {\ell_2! }  |\lam|^{\frac {\ell_2} 2} ( H_n^{(\ell_2)})_\lam(z).
$$
Then we have to study (for fixed~$\ell_1$) the convergence of the series with general term, 
$$
W_{\ell_1,\ell_2}(\wh w,Y) \eqdefa  \frac { (2y)^{\ell_2} } {\ell_2! }  |\lam|^{\frac {\ell_2} 2} \bigl(H_n^{(\ell_2)} | M^{\ell_1} H_m\bigr)_{L^2}.
$$
Using again\refeq {relationsHHermiteCAb}, we see that 
$$
\|H_{n}^{(\ell_2)}\|_{L^2(\R)} \leq   (2n+2\ell_2)^{\frac{\ell_2}2}. 
$$
Hence, arguing as above, we get  for any~$(\wh w,Y)$ in~$\cB(R_0)$, 
$$
W_{\ell_1,\ell_2}(\wh w,Y)  \leq  2^{\frac {\ell_1} 2}  (m\!+\!\ell_1)^{\frac {\ell_1} 2}  \wt W_{\ell_2}(\wh w,Y) \with 
  \wt W_{\ell_2}(\wh w,Y)  \eqdefa 
\frac { (2\sqrt 2R_0)^{\ell_2} } {\ell_2! }  |\lam|^{\frac {\ell_2} 2}  (n\!+\!\ell_2)^{\frac{\ell_2}2},
$$
and it is now easy to complete  the proof of the lemma.
\end{proof}

\medbreak
Reverting to the proof of the continuity of $\wt\cW$ in the neighborhood of $\wh\H^d_0,$ 
 the problem now consists in investigating the behavior  of the function
$$
\cH_{\ell_1,\ell_2}:\quad \left\{
\begin{array} {rcl}
\wt \H^1 & \longrightarrow &\R\\
\wh w=(n,m,\lam) & \longmapsto  &   |\lam|^{\frac{\ell_1+\ell_2}2 } \,
\bigl(M^{\ell_1}H_{m} |H_{n}^{(\ell_2)}\bigr)_{L^2(\R)}
\end{array}
\right.
$$
when~$\lam$ tends to~$0$ and $\lam(n+m)\to\dot x$ for fixed $k\eqdefa m-n.$ 
\medbreak
{}From  Relations \eqref{Mjdj}, we infer that  
$$
\cH_{\ell_1,\ell_2}(\wh w)  =   2^{-(\ell_1+\ell_2)}\,  |\lam|^{\frac{\ell_1+\ell_2}2 }  
 \bigl((A+C)^{\ell_1}H_{m} | (A-C)^{\ell_2}H_{n} \bigr)_{L^2(\R)}.
$$
The explicit computation of~$(A\pm C)^{\ell}$ is doable but tedious and fortunately turns out to be useless when~$\lam$ tends to~$0$.  Indeed, we have the following lemma: 
\begin{lemma}
\label {ProofCFL1lemma2}
{\sl
A constant~$C_\ell(R_0)$ (depending only on~$R_0$ and~$\ell$) exists  such that, for any~$(n,\lam)$ 
with $\lam>0$ and~$\lam n \leq R_0$,  we have (with the convention that $H_p=0$ if $p<0$):
$$
\Bigl \| \lam^{\frac  \ell 2 } \Bigl(\frac{A \pm C}2\Bigr)^{\ell } H_n - \Bigl(\frac{\lam n}2\Bigr)^{\frac  \ell 2} \sum_{\ell'=0} ^\ell  (\pm1)^{\ell-\ell'} \begin{pmatrix} \ell \\ \ell' \end{pmatrix}  H_{n+\ell-2\ell'} \Bigr\|_{L^2(\R)} \leq C_\ell(R_0)\lam^{\frac 12}.
$$
}
\end{lemma}
\begin{proof}
Let~$\cV_{n,\ell}$ be the vector space generated by~$(H_{n+\ell'})_{-\ell\leq \ell'\leq \ell},$
 equipped with the~$L^2(\R)$-norm. 
 Let 
 $$
  R_{n,\ell}  \eqdefa  \lam^{\frac  \ell 2 } (A \pm C)^{\ell } H_n - (2\lam n)^{\frac  \ell 2} \sum_{\ell'=0} ^\ell  (\pm 1)^{\ell-\ell'} \begin{pmatrix} \ell \\ \ell' \end{pmatrix}  H_{n+\ell-2\ell'}.
  $$
  Formulae \eqref{relationsHHermiteCA} guarantee that  $R_{n,\ell}$ is in~$\cV_{n,\ell}.$  
 Let us now prove  by induction on $\ell$ that
\begin{equation}
\label {ProofCFL1lemma2demoeq1}
\|R_{n,\ell}\|_{\cV_{n,\ell}} \leq  C_\ell(R_0)\lam^{\frac 12}.
 \end{equation}
In the case when~$\ell$ equals~$1$, by definition of~$A$ and~$C$, we have
\beno
\lam^{\frac 12} (A\pm C)  H_n &= & \lam^{\frac 12} \bigl( \sqrt {2n} H_{n-1} \pm \sqrt {2n+2} H_{n+1} \bigr)\\
& =& \sqrt {2\lam n} (H_{n-1} \pm H_{n+1}) \pm \frac {2\sqrt \lam} {\sqrt {2n+2} +\sqrt {2n}} H_{n+1}
\eeno
and\refeq {ProofCFL1lemma2demoeq1} is thus obvious.
\smallbreak
Let us now observe that, for any~$\ell'$ in~$\{-\ell,\cdots ,\ell\}$, we have
$$\begin{aligned}
\lam^{\frac 12} \|A H_{n+\ell'} \|_{L^2(\R)}  &=  \sqrt {2\lam(n+\ell')}\,  \|H_{n+\ell'-1}\|_{L^2(\R)} \andf\\
\lam^{\frac 12} \|C H_{n+\ell'} \|_{L^2(\R)}  & = \sqrt {2\lam(n+\ell'+1)} \, \|H_{n+\ell'+1}\|_{L^2(\R)}.
\end{aligned}
$$
This gives that for all $\lam(n+1)\leq R_0,$
\beq
\label {ProofCFL1lemma2demoeq2}
\bigl \| \lam^{\frac 12} (A\pm C)\bigr\|_{\cL(\cV_{n,\ell} ; \cV_{n,\ell+1})} \leq C_\ell(R_0).
\eeq
Let us assume that\refeq  {ProofCFL1lemma2demoeq1} holds for some~$\ell$.  Inequality\refeq {ProofCFL1lemma2demoeq2} implies that 
\beq
\label {ProofCFL1lemma2demoeq3}
\bigl \| \lam^{\frac 12} (A\pm C)R_{n,\ell} \bigr\|_{\cV_{n,\ell+1}} \leq \lam^{\frac 12}C_\ell(R_0).
\eeq
Then, for any~$\ell'$ in~$\{0,\cdots,\ell\}$, we have
$$
\begin{aligned}
\lam^{\frac 12} (A\pm C)  H_{n+\ell-2\ell'}  &=  \lam^{\frac 12} \bigl( \sqrt {2n+2\ell-4\ell'}\, H_{n+\ell-2\ell'-1} \pm \sqrt {2n+2\ell-4\ell'+2}\,  H_{n+\ell-2\ell'+1} \bigr)\\
& = \sqrt {2\lam n} \bigl(H_{n+\ell+1-2(\ell'+1)} \pm H_{n+\ell+1-2\ell'} \bigr) \\
&\hspace{-2cm}+\frac {2\lam^{\frac 12}(\ell-2\ell')} {\sqrt {2n+2\ell-4\ell'}+ \sqrt {2n}}H_{n+\ell-2\ell'-1} \pm \frac {2\lam^{\frac 12}(\ell-2\ell'+1)} {\sqrt {2n+2\ell-4\ell'+2}+ \sqrt {2n}}H_{n+\ell-2\ell'+1}\,\cdotp
\end{aligned}
$$
We deduce that for any~$\ell'$ in~$\{0, \cdots, \ell\}$, 
$$
\bigl\| \lam^{\frac 12} (A\pm C)  H_{n+\ell-2\ell'} 
-  \sqrt {2\lam n} \bigl(H_{n+\ell+1-2(\ell'+1)} \pm H_{n+\ell+1-2\ell'} \bigr)\bigr\|_{\cV_{n,\ell+1} }
\leq C_{\ell+1} (R_0)\lam^{\frac 12} .
$$
Using\refeq{ProofCFL1lemma2demoeq3} gives
$$\displaylines{\quad
\bigl \| \lam^{\frac  {\ell+1}  2 } (A \pm C)^{\ell +1 } H_n - (2\lam n)^{\frac { \ell+1} 2} \Sigma_{n,\ell}
 \bigr\|_{L^2(\R)} 
\leq C_{\ell+1} (R_0)\lam^{\frac 12}\hfill\cr\hfill
\with \Sigma_{n,\ell}  \eqdefa  \sum_{\ell'=0} ^\ell  (\pm1)^{\ell-\ell'} \begin{pmatrix} \ell \\ \ell' \end{pmatrix}   \bigl(H_{n+\ell+1-2(\ell'+1)} \pm H_{n+\ell+1-2\ell'} \bigr).\quad}
$$
Now, Pascal's rule ensures that
$$\begin{aligned}
\Sigma_{n,\ell}
 = & \sum_{\ell'=1} ^{\ell+1}  (\pm1)^{\ell+1-\ell'} \begin{pmatrix} \ell \\ \ell'-1 \end{pmatrix}  H_{n+\ell+1-2\ell'} 
+\sum_{\ell'=0} ^\ell  (\pm1)^{\ell+1-\ell'} \begin{pmatrix} \ell \\ \ell' \end{pmatrix}  H_{n+\ell+1-2\ell'} \\
 = & \sum_{\ell'=0} ^{\ell+1}  (\pm1)^{\ell+1-\ell'} \begin{pmatrix} \ell+1 \\ \ell' \end{pmatrix}  H_{n+\ell+1-2\ell'}. 
\end{aligned}$$
The lemma is proved.
\end{proof}

\medbreak
  
From this lemma, we can deduce the following corollary.
\begin{cor}
\label {ProofCFL1_Coroll1}
{\sl
For any $(\ell_1,\ell_2)$ in $\N^2$  and $R_0>0,$ there exists a constant~�$C_{\ell_1,\ell_2} (R_0)$  such that for all~$(n,n+k,\lam)$ in $\wt \H^1$ with~$|\lam n| +|k|+ |\lam|\leq R_0$, we have
$$\displaylines{\qquad
\Bigl|\cH_{\ell_1,\ell_2}(\wh w) -  F_{\ell_1,\ell_2} (k)  \Bigl(\frac{|\lam| n}2\Bigr)^{\frac {\ell_1+\ell_2}2}\Bigr| 
 \leq    C_{\ell_1,\ell_2}(R_0) |\lam| ^{\frac 12}\hfill\cr\hfill
\with  F_{\ell_1,\ell_2} (k) \eqdefa  \sumetage{\ell_1'\leq \ell_1, \ell'_2\leq \ell_2}  {k+\ell_1-2\ell'_1= \ell_2-2\ell'_2} (-1)^{\ell_2-\ell'_2} \begin{pmatrix} \ell_1 \\ \ell' _1\end{pmatrix}  \begin{pmatrix} \ell_2 \\ \ell' _2\end{pmatrix}\cdotp\qquad}$$
}
\end{cor}
\begin{proof}
Lemma\refer {ProofCFL1lemma2} implies that 
$$
\displaylines{
\Bigl|\cH_{\ell_1,\ell_2}(\wh w) -  \Bigl(\frac{|\lam| n}2\Bigr)^{\frac {\ell_2}2} \Bigl(\frac{|\lam| (n+k)}2\Bigr)^{\frac { \ell_1} 2} \sumetage{\ell_1'\leq \ell_1} 
{\ell'_2\leq \ell_2} (-1)^{\ell_2-\ell'_2} \begin{pmatrix} \ell_1 \\ \ell' _1\end{pmatrix}  \begin{pmatrix} \ell_2 \\ \ell' _2\end{pmatrix} \bigl(H_{n+k+\ell_1-2\ell'_1} |H_{n+\ell_2-2\ell'_2}\bigr )_{L^2}\Bigr|\cr
{}
 \leq  C_{\ell_1,\ell_2}(R_0) |\lam| ^{\frac 12}.
}
$$
Now, let us notice that 
$$
(|\lam|(n+k))^{\frac{\ell_1}2}-(|\lam|n)^{\frac{\ell_1}2}=\frac{|\lam| k}{\sqrt{|\lam| n}+\sqrt{|\lam|(n+k)}}
\sum_{\ell_1'=0}^{\ell_1-1}\sqrt{|\lam| n}^{\ell_1'}\sqrt{|\lam|(n+k)}^{\ell_1-1-\ell_1'}.
$$
Hence it is clear that for fixed~$k$ in~$\Z$ such that $|k|\leq R_0$, we have, for~$|\lam|\leq R_0$ and~$|n\lam|\leq R_0$, 
$$
\bigl |  (|\lam| n)^{\frac {\ell_2}2} (|\lam| (n+k))^{\frac { \ell_1} 2} - |\lam n|^{\frac {\ell_1+\ell_2}2} \bigr| \leq C_{\ell_1,\ell_2} (R_0) |\lam|^{\frac 12}.
$$
Thanks to \eqref{def:kro}, we conclude the proof.
\end{proof}

\medbreak
\noindent {\it Conclusion of the proof of Proposition\refer {ProofCFL1_Prop1} } Consider a positive real number~$\e$. 
Recall that
$$
\wt\cW(\wh w,Y)=\sum_{\ell_1,\ell_2}(\sgn\lambda)^{\ell_1} \frac{(2i\eta)^{\ell_1}(2y)^{\ell_2}}{\ell_1!\ell_2!}
\cH_{\ell_1,\ell_2}(\wh w).
$$
Clearly, it suffices to prove the uniform continuity of $\wt\cW$ for all subset of $\wh\H^d$ corresponding
to some  \emph{fixed} value $k$ of $m-n.$ Now, considering $\wh w_1=(n_1,n_1+k,\lam_1)$ and $\wh w_2=(n_2,n_2+k,\lam_2),$
Lemma\refer  {Wignerconvnormally} implies that for all $\ep>0,$ there exist two integers~$L_ {1,\e}$ and $L_{2,\e}$ such that 
\beq
\label {ProofCFL1_Prop1demoeq6} 
\begin{aligned}
&\bigl |\wt  \cW(\wh w_1,Y)- \wt \cW(\wh w_2,Y)\bigr|  \leq  \frac \e 4
+\sumetage {\ell_1\leq L_{1,\e}} {\ell_2\leq L_{2,\e}}   \frac {(2|\eta|)^{\ell_1}(2|y|)^{\ell_2}} {\ell_1!\ell_2!}   \\
&\qquad{}
\times 
\bigl | (\sgn \lam_1)^{\ell_1} \cH_{\ell_1,\ell_2} (n_1,n_1+k,\lam_1) - (\sgn \lam_2)^{\ell_1} \cH_{\ell_1,\ell_2} (n_2,n_2+k,\lam_2) \bigr| \,.
\end{aligned}
\eeq
Let~$C_\e(R_0)$  be the supremum for~$\ell_1\leq L_{1,\e}$ and   $\ell_2\leq L_{2,\e}$  of all constants~$C_{\ell_1,\ell_2} (R_0)$ which appear in Corollary\refer {ProofCFL1_Coroll1}.
Then we have 
\begin{equation}
\label {ProofCFL1_Prop1demoeq6b}
\begin{aligned}
&|\lam_1|\!+\!|\lam_2|\leq A(\e,R_0) \Longrightarrow
\bigl |\wt  \cW(\wh w_1,Y)- \wt \cW(\wh w_2,Y)\bigr|  \leq  \frac \e 2
+\!\!\!\sumetage {\ell_1\leq L_{1,\e}} {\ell_2\leq L_{2,\e}} \!\!\!\!\frac {(2\,R_0)^{\ell_1+\ell_2}} {\ell_1!\ell_2!}|F_{\ell_1,\ell_2} (k) | \\
&\qquad\qquad\qquad\qquad\qquad\qquad{} \times 
\Bigl |(\sgn \lam_1)^{\ell_1}  \Big|\frac{\lam_1 n_1}2\Big|^{\frac {\ell_1+\ell_2}2} -  (\sgn \lam_2)^{\ell_1}\Big|\frac{\lam_2 n_2}2\Big|^{\frac {\ell_1+\ell_2}2} \Bigl |\\
&\qquad\qquad\qquad\qquad\qquad\qquad\qquad\qquad\qquad\qquad\qquad \with  A(\e,R_0)\eqdefa \ds \frac {e^{-8R_0} \e^2} {32 C_\e^2(R_0)}\,\cdotp
\end{aligned}
\end{equation}
If~$\ell_1+\ell_2=0$ then the last term of the above inequality is~$0$. If~$\ell_1+\ell_2$ is positive,  as~$|F_{\ell_1,\ell_2} (k)|$ is less than~$2^{\ell_1+\ell_2}$,  we have, using\refeq  {ProofCFL1_Prop1demoeq6b},
\beq
\label {ProofCFL1_Prop1demoeq7} 
\begin{aligned}
& |\lam_1|+|\lam_2|\leq A(\e,R_0) \andf |\lam_1 n_1|+|\lam_2 n_2| \leq \frac 1{16} \e^2 e^{-8R_0}\\
& \qquad\qquad\qquad\qquad\qquad\qquad\qquad\qquad\qquad \Longrightarrow 
\bigl |\wt  \cW(\wh w_1,Y)- \wt \cW(\wh w_2,Y)\bigr|  \leq  \e.
\end{aligned}
\eeq
In the case when~$ |\lam_1 n_1|+|\lam_2 n_2| $ is greater than~$\ds \frac 1{16}\e^2 e^{-8R_0}$ then if
$$
\bigl| \lam_1n_1-\lam_2 n_2\bigr| \leq \frac 1{32}\e^2 e^{-8R_0}
$$
then~$\lam_1$ and~$\lam_2$  have the same sign.
The sum in the right-hand side term is  finite, 
and it is clear that each term converges uniformly to $0$  if $\lambda_2n_2$ tends to $\lambda_1 n_1.$
 Thus a positive real number~$\eta_\e$ exists such that 
\beq
\label {ProofCFL1_Prop1demoeq8}
|\lam_1|+|\lam_2|\leq A(\e,R_0) \andf |\lam_1n_1-\lam_2n_2| \leq \eta_\e \Longrightarrow
\bigl |\wt  \cW(\wh w_1,Y)- \wt \cW(\wh w_2,Y)\bigr|  \leq  \e.\\
\eeq
Finally, we have to consider the case where~$|\lam_1|+|\lam_2|\geq  A(\e,R_0)$. With no loss of generality, one can assume
that~$\lam_2 \geq \ds \frac 12 A(\e,R_0)$. Thus, if~$|\lam_1-\lam_2|$ is less than~$\ds \frac 14 A(\e,R_0)$ we have~$\lam_1 \geq \ds \frac 14 A(\e,R_0)$ and we can apply Inequality\refeq {ProofCFL1_Prop1demoeq4} which gives 
(supposing that $A(\ep,R_0)\leq1$):
$$
\bigl |\wt  \cW(\wh w_1,Y)- \wt \cW(\wh w_2,Y)\bigr|  \leq 2C(R_0) \Bigl( \frac 1 4 A(\e,R_0)\Bigr)^{-2} |\lam_1-\lam_2|. 
$$
Together with\refeq {ProofCFL1_Prop1demoeq7}, this gives, if~$(n_j,m_j,\lam_j)$ are in~$\cB(R_0)$,
$$
|\lam_1-\lam_2| \leq \frac {\e A^2(\e,R_0)} {32C(R_0)} \andf |\lam_1n_1-\lam_2 n_2| <\eta_\e
\Longrightarrow \bigl |\wt  \cW(\wh w_1,Y)- \wt \cW(\wh w_2,Y)\bigr| <\e.
$$
The proposition is proved.
\end{proof}

\medbreak
\begin{proof}  [End of the proof of the first part of Theorem\refer {FourierL1basic}]
Because of the integrability of~$f$, Proposition\refer {ProofCFL1_Prop1} 
implies that~$\wh f_\H$ is uniformly continuous on~$\wt \H^d,$ and  can thus be extended to a uniformly continuous function on the complete metric space~$\wh\H^d.$

Let us finally  establish that 
$\wh f_\H(\wh w) \to0$ when $\wh w$ goes to infinity. In the case where $f$ is in $\cS(\H^d),$
this in an obvious consequence of Lemma\refer {decaylambdan}.
The general case of an integrable function on $\H^d$ follows by density as, obviously, 
Formula \eqref{definFourierWigner}
implies that the map  $f\to \wh f_\H$ 
is continuous from $L^1(\H^d)$ to $L^\infty(\wh \H^d).$ \end{proof}
 \medbreak 
 We are now ready to  establish Formula\refeq {FourierL1basiceq2} for any integrable function $f$ on $\H^d.$ 
 So let us fix some~$(\dot x,k)$ in~$\wh\H^d_0$, and consider 
 a sequence~$\suite {\wh w } p \N  = (n_p,n_p+k,\lam_p)_{p\in \N}$ such that
 $$
 \lim_{p\rightarrow\infty} \wh w_p = (\dot x, k) \ \hbox{in the sense of~$\wh d$}.
 $$
According to  Proposition\refer {ProofCFL1_Prop1}, if we set  
 $$
\cK_d(\dot x, k,Y) \eqdefa \lim_{p\rightarrow\infty} \cW(\wh w_p,Y)
 $$
 then the  definition of~$\wh f_\H$  on~$\wt \H^d$ and the Lebesgue dominated convergence theorem
imply that 
 $$\wh f_\H (\dot x, k) = \lim_{p\rightarrow\infty}\wh f_\H ( \wh w_p)
=\int_{\H^d} \ov\cK_d  (\dot x, k,Y)   f(Y,s) \,dY\,ds\,.$$
Now,  Lemma\refer {Wignerconvnormally}  gives
$$\displaylines{\quad
\cK_d (\dot x, k,Y) = \sum_{\ell_1,\ell_2}   \frac {(2i\eta)^{\ell_1}} {\ell_1! } \frac {(2y)^{\ell_2}} {\ell_2!}   
\lim_{p\rightarrow\infty}  \cH_{\ell_1,\ell_2} (\wh w_p)(\sgn\lam_p)^{\ell_1}\hfill\cr\hfill\quad\with
\cH_{\ell_1,\ell_2}(\wh w)\eqdefa|\lam|^{\frac{|\ell_1+\ell_2|}2}
\bigl(M^{\ell_1} H_m|\partial^{\ell_2} H_n\bigr)_{L^2(\R^d)}.}
 $$ 
 If $d=1$ then Corollary\refer {ProofCFL1_Coroll1} implies that 
   $$ 
\lim_{p\rightarrow\infty}  \cH_{\ell_1,\ell_2} (\wh w_p) =  F_{\ell_1,\ell_2} (k) \,\biggl(\frac{|\dot x|}4\biggr)^{\frac{\ell_1+\ell_2}2}
 $$
and, because $\sgn(\lambda_p)=\sgn \dot x$ for large enough $p,$  this guarantees \refeq {FourierL1basiceq2}
 and  Formula \eqref {definPhaseFlambda=0}.
 \smallbreak
Once again, as  in general dimension $d\geq1$ the term $\cH_{\ell_1,\ell_2}$ may 
be written as the product of $d$ terms involving only one-dimensional Hermite functions, 
the above formula still holds true (with the notation convention given in  
Proposition\refer{ProofCFL1_Prop1} of course). 

This concludes the proof of the first part of Theorem\refer  {FourierL1basic} and of Identity\refeq {FourierL1basiceq2}. 
\qed

 \begin{remark}\label{rk:K0}{\sl 
 Computing $\cK_d$ will be carried out later on, in Section\refer{proofFormulacK}. 
 For the time being, let us just underline that  the expression of $F_{\ell_1,\ell_2}(k)$ which appears  in\refeq{definPhaseFlambda=0} ensures that  $F_{0,0}(k)=\delta_{0,k}.$ We thus have 
\begin{equation}
\label {eq:K0} \cK_d(\dot x, k,0) = \cK_d(0, k,Y) =F_{0,0}(k)=\delta_{0,k}.
\end{equation}
Let us also notice that, denoting by~$\wh 0$ the point~$(0,0)$ of~$\wh\H^d_0$, we recover the following property:
\beq
\label {Fourier0=int}
\wh f_\H (\wh 0) =\int_{\H^d} f(w)\, dw.
\eeq
}
\end{remark}

\section {The case of functions that  do not depend on the vertical variable}
\label {FourierHorizontal}

The purpose of this section  is to prove Theorem\refer {Fourierhorizontal}. As
already pointed out in the introduction, a key issue is to  study  the
limit (in the sense of weak convergence of measures) of  functions which concentrate near the set~$\wh\H^d_0$. This is the aim of the following lemma.

\begin{lemma}
\label {convergesimplecouchH_0}
{\sl
Let $\wh\chi:\R\to\R$  be integrable, compactly supported and
with integral $1.$ 
Then for any   continuous
function $\theta$ from~$\wh\H^d$ to~$\C$  satisfying
\begin{equation}\label{eq:condtheta}
\sup_{(n,m,\lam)\in\wt\H^d}\bigl( 1+|\lam|( |n+m|+d) +|n-m|\bigr) ^{2d+1} | \theta (n,m,\lam)| <\infty,
\end{equation}
we have
$$
 \lim_{\ep\to0} \int_{\wh\H^d} \ep^{-1}\wh\chi(\ep^{-1}\lam)
\theta(n,m,\lam)\,d\wh w =  \langle \mu_{\wh\H^d_0},\theta\rangle
$$
where the measure in the right-hand side has been defined in \eqref{limitmeasureeq1}.
}
\end{lemma}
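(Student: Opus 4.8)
The plan is to test the measure $\ep^{-1}\wh\chi(\ep^{-1}\lam)\,d\wh w$ against $\theta$ and to pass to the limit term by term in the sum over $(n,m)\in\N^{2d}$ that defines $d\wh w$ via \eqref{definmeasurewhH}. Writing $k\eqdefa m-n$ as the new summation index in place of $m$, we have
$$
\int_{\wh\H^d}\ep^{-1}\wh\chi(\ep^{-1}\lam)\,\theta(n,m,\lam)\,d\wh w
=\sum_{k\in\Z^d}\sum_{n\in\N^d}\int_\R \ep^{-1}\wh\chi(\ep^{-1}\lam)\,\theta(n,n+k,\lam)\,|\lam|^d\,d\lam.
$$
First I would fix $k$ and analyse the inner double sum-integral. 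The change of variables $\lam=\ep\mu$ turns $\int_\R\ep^{-1}\wh\chi(\ep^{-1}\lam)\,\theta(n,n+k,\lam)|\lam|^d\,d\lam$ into $\ep^d\int_\R\wh\chi(\mu)\,\theta(n,n+k,\ep\mu)\,|\mu|^d\,d\mu$, and then I would think of $\ep^d\sum_{n\in\N^d}(\cdots)$ as a Riemann sum: the grid $\{2\ep n:n\in\N^d\}$ has mesh $2\ep$, so $\ep^d\sum_{n\in\N^d}G(2\ep n)\to 2^{-d}\int_{(\R_+)^d}G(\dot x)\,d\dot x$ for a suitable continuous integrable $G$. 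Here, by continuity of $\theta$ on $\wh\H^d$ for the distance $\wh d$ — which is exactly the content of the containment $\wh d((n,n+k,\ep\mu),(2\ep n\cdot\mu,k))\le$ (something $\to 0$) established through Proposition \ref{completionHtilde} — we have $\theta(n,n+k,\ep\mu)\to\theta(\dot x,k)$ when $2\ep n\to\dot x$ and $\ep\to0$, the sign of $\dot x$ being that of $\mu$. Combining, the inner quantity should converge to $2^{-d}\bigl(\int_\R\wh\chi(\mu)\,|\mu|^d\,d\mu\bigr)$-weighted... — more precisely, keeping track of the sign, to $2^{-d}\int_{(\R_+)^d}\theta(\dot x,k)\,d\dot x\cdot\int_{\mu>0}\wh\chi(\mu)|\mu|^d\,d\mu+2^{-d}\int_{(\R_-)^d}\theta(\dot x,k)\,d\dot x\cdot\int_{\mu<0}\wh\chi(\mu)|\mu|^d\,d\mu$; but since the Riemann-sum variable and the sign of $\mu$ are tied together (both components of $2\ep n\cdot\mu$ carry $\sgn\mu$), a cleaner bookkeeping splits the $\mu$-integral first and recognizes each piece as producing the corresponding orthant, so that after using $\int\wh\chi=1$ one lands precisely on $2^{-d}\bigl(\int_{(\R_-)^d}\theta(\dot x,k)\,d\dot x+\int_{(\R_+)^d}\theta(\dot x,k)\,d\dot x\bigr)$, which is the $k$-th term of $\langle\mu_{\wh\H^d_0},\theta\rangle$ from \eqref{limitmeasureeq1}.

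The main obstacle is \emph{uniformity}: to interchange $\lim_{\ep\to0}$ with $\sum_{k\in\Z^d}\sum_{n\in\N^d}$, and to legitimise the passage to the Riemann integral, I need an $\ep$-independent, summable/integrable dominating bound. This is exactly why the hypothesis \eqref{eq:condtheta} with the decay exponent $2d+1$ is imposed: on the support of $\wh\chi$ we have $|\lam|\le C\ep$, hence $|\lam|(|n+m|+d)=|\ep\mu|(2|n|+|k|+d)\simeq 2\ep|\mu||n|+\dots$ and $|n-m|=|k|$, so \eqref{eq:condtheta} gives
$$
|\theta(n,n+k,\ep\mu)|\le C\bigl(1+2\ep|\mu|\,|n|+|k|\bigr)^{-(2d+1)}.
$$
Then $\ep^d|\mu|^d\sum_{n\in\N^d}\bigl(1+2\ep|\mu||n|+|k|\bigr)^{-(2d+1)}$ is, up to constants, again a Riemann sum for $\int_{(\R_+)^d}(1+|\dot x|+|k|)^{-(2d+1)}\,d\dot x\lesssim (1+|k|)^{-(d+1)}$, uniformly in $\ep$ and in $\mu$ on the compact support of $\wh\chi$; and $\sum_{k\in\Z^d}(1+|k|)^{-(d+1)}<\infty$. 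I would make this rigorous by first proving, for fixed $k$, a dominated-convergence/Riemann-sum lemma: if $G_\ep\to G$ locally uniformly on $(\R_{\pm})^d$ with $|G_\ep(\dot x)|\le \Phi(\dot x)$, $\Phi$ integrable and monotone-ish, then $\ep^d\sum_{n}G_\ep(2\ep n)\to 2^{-d}\int G$. Once the fixed-$k$ limit and the uniform-in-$k$ domination are in hand, a further dominated convergence in $k$ finishes the proof.

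I expect the sign bookkeeping (tying $\sgn\mu$ to the orthant picked out by the Riemann sum) and the careful statement of the Riemann-sum lemma with a uniform dominating function to be the only genuinely delicate points; everything else is the change of variables $\lam=\ep\mu$, the rewriting $m=n+k$, the continuity of $\theta$ on the completion $\wh\H^d$ (granted by Proposition \ref{completionHtilde}), and Fubini/Tonelli together with the bound \eqref{eq:condtheta}.
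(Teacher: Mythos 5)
Your overall strategy is essentially the paper's: fix $k=m-n$, view the sum over $n$ as a Riemann sum on the grid $2\lam n$ (the paper does this directly in $\lam$ by tiling $(\R_{\sgn\lam})^d$ with the cubes $Q_{n,\lam}=2\lam n+2\lam[0,1[^d$ of volume $(2|\lam|)^d$, you do it after the rescaling $\lam=\ep\mu$), use the continuity of $\theta$ on $(\wh\H^d,\wh d)$ for the pointwise limit, and invoke the decay \eqref{eq:condtheta} for the tails. Your tail treatment (an $\ep$-uniform dominating function $\lesssim(1+|k|)^{-d-1}$ plus dominated convergence in $k$) is a workable variant of the paper's truncation $\theta=\theta_K+\theta^K$ with $\cJ_K^1(\ep)+\cJ_K^2(\ep)\lesssim K^{-1}$, and the small slip about the grid (it is $2\ep\mu n$, of mesh $2\ep|\mu|$, not $2\ep n$ of mesh $2\ep$) is harmless since the missing $|\mu|^d$ is already sitting in your integrand.

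The genuine gap is exactly the point you dismiss as ``cleaner bookkeeping''. For fixed $k$ and $\mu\neq0$, the Riemann-sum step gives
$$
\ep^d|\mu|^d\sum_{n\in\N^d}\theta(n,n+k,\ep\mu)\;\longrightarrow\;2^{-d}\int_{(\R_{\sgn\mu})^d}\theta(\dot x,k)\,d\dot x ,
$$
the factor $|\mu|^d$ being part of the cell volume $(2\ep|\mu|)^d$ (so it must not survive as a weight on $\wh\chi$, contrary to your first displayed formula). Since positive $\mu$ only ever produces grid points in $(\R_+)^d$ and negative $\mu$ only in $(\R_-)^d$, integrating in $\mu$ yields for the $k$-th term
$$
2^{-d}\Bigl[\Bigl(\int_0^{\infty}\wh\chi(\mu)\,d\mu\Bigr)\int_{(\R_+)^d}\theta(\dot x,k)\,d\dot x+\Bigl(\int_{-\infty}^{0}\wh\chi(\mu)\,d\mu\Bigr)\int_{(\R_-)^d}\theta(\dot x,k)\,d\dot x\Bigr],
$$
and the hypothesis $\int_\R\wh\chi=1$ does not let you replace both half-line masses by $1$. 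So the assertion ``after using $\int\wh\chi=1$ one lands precisely on $2^{-d}\bigl(\int_{(\R_-)^d}\theta+\int_{(\R_+)^d}\theta\bigr)$'' does not follow from your computation; as written the step fails (for instance with $d=1$, $\wh\chi={\bf 1}_{[-2,-1]}$ and $\theta(n,m,\lam)=\delta_{n,m}\phi(\lam(n+m))$, $\phi$ continuous supported in $[1,2]$, the left-hand side vanishes for every $\ep$ while $\langle\mu_{\wh\H_0^1},\theta\rangle=\frac12\int\phi>0$). To be fair, the paper's own redaction glosses over the same point: in its main-term computation $\sum_n{\bf 1}_{Q_{n,\lam}}\equiv1$ only on $(\R_{\sgn\lam})^d$, so what actually comes out is $\bigl(\int_{\R_\pm}\ep^{-1}\wh\chi(\ep^{-1}\lam)\,d\lam\bigr)\int_{(\R_\pm)^d}\theta(\dot x,k)\,d\dot x$ rather than $\int_{(\R_\pm)^d}\theta(\dot x,k)\,d\dot x$. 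A complete argument must track the two half-line masses of $\wh\chi$ separately (and accordingly adjust either the normalization in the conclusion \eqref{limitmeasureeq1} or the assumptions on $\wh\chi$); your proposal, having flagged this as the delicate point, leaves precisely it unresolved.
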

\begin{proof}
 Let us first prove the result if  the function $\theta$ 
 is supported in the closure of
 $$
 \cB_K\eqdefa\bigl\{(n,m,\lambda)\in \wt \H^d\,:\,
|\lam|(2|n|+d)\leq K\ \hbox{ and }\ |m-n|\leq K\bigr\}
$$
for some positive~$K$. Then  we have
\beno
\cI_\e &\eqdefa &  \int_{\wh\H^d} \ep^{-1}\wh\chi(\ep^{-1}\lam)
\theta(n,m,\lam)\,d\wh w= \sum_{|k|\leq K}\bigl(\cI_\e^-(k)+ \cI_\e^+(k)\bigr)\with\\
  \cI_\ep^\pm(k) & \eqdefa  &\int_{\R_{\pm}} \ep^{-1}\wh \chi(\ep^{-1}\lam) \biggl(\sum_{n\in\N^d}
\theta(n,n+k,\lam)\biggr)|\lambda|^dd\lambda.
\eeno
Above, we agreed  that~$\theta(n,n+k,\lam) =0$ whenever at least one component of~$n+k$ is negative.
Then the  idea is to use Riemann type  sums. More concretely, for all $n$ in~$\N^d$ and $\lambda$ in~$\R\setminus\{0\},$
let us define the family of cubes~$Q_{n,\lam} \eqdefa 2\lam n + 2\lam[0,1[^d$. It is obvious that  
\beq
\label {convergesimplecouchH_0demoeq0}
{\rm Vol} (Q_{n,\lam}) = (2|\lam|)^{d}\andf \sum_{n\in\N^d} {\bf 1}_{Q_{n,\lam}}=1\ \hbox{ on }\ (\R_{\sgn\lam})^d.
\eeq
{}From the volume property and the definition of $\cI_\e^+(k),$ we readily get 
$$
  \cI_\e^+(k) = 2^{-d}  \int_\R\int_{(\R_+)^d} \sum_{n\in\N^d}
 \ep^{-1}\wh \chi(\ep^{-1}\lam)
\theta(n,n+k,\lam) {\bf 1}_{Q_{n,\lam}}(\dot x)  \,d\dot x\,d\lambda.
$$
 Let us write that
$$
\longformule{
2^d\cI_\e^+(k)=\int_\R\int_{(\R_+)^d}\sum_{n\in\N^d}\e^{-1}\wh\chi(\e^{-1}\lam)\theta(\dot x,k) {\bf 1}_{Q_{n,\lam}}(\dot x)  \,d\dot x \,d\lam
}
{ {}
+ \int_\R\int_{(\R_+)^d}
 \ep^{-1}\wh \chi(\ep^{-1}\lam)  \sum_{n\in\N^d} \bigl(\theta(n,n+k,\lam)-\theta(\dot x,k)\bigr)
  {\bf 1}_{Q_{n,\lam}}(\dot x)  \,d\dot x\,d\lambda\,.
  }
$$
Using the second property of \eqref{convergesimplecouchH_0demoeq0},
the fact that~$\wh \chi$ is of integral~$1,$ and 
that the summation  may be restricted 
to those indices $n$ in~$\N^d$ such that~$|\lam n|\leq K$ 
(because $\theta$ is supported in~$\cB_K$), we end up with
$$
2^d\cI_\e^+(k)-\int_{(\R_+)^d}\!\theta(\dot x,k)\,d\dot x
=\int_\R\!\int_{(\R_+)^d}\!
 \ep^{-1}\wh \chi(\ep^{-1}\lam) \!  \sum_{|n\lam|\leq K} \bigl(\theta(n,n+k,\lam)-\theta(\dot x,k)\bigr)
  {\bf 1}_{Q_{n,\lam}}(\dot x)  \,d\dot x\,d\lambda.
$$
As~$\theta$ is uniformly continuous on~$\wh\H^d$ (being compactly supported), we have
$$
\forall \eta>0\,,\ \exists \e>0,\  |2\lam n-\dot x|+|\lam| <\e \Longrightarrow 
\bigl | \theta(n,n+k,\lam)-\theta(\dot x,k)\bigr | <\eta.
$$
One can thus conclude  that  for any $\eta>0,$ if $\e$ is small enough then we have
$$
\bigg|2^d\cI_\e^+(k)-\int_{(\R_+)^d}\theta(\dot x,k)\,d\dot x\biggr|
\leq  \eta \int_\R
 \ep^{-1}\wh \chi(\ep^{-1}\lam) \biggl( \sum_{|n\lam|\leq K}  \int_{(\R_+)^d} {\bf 1}_{Q_{n,\lam}}(\dot x)  \,d\dot x\biggr)
d\lambda.
$$
Using once again that the measure of $Q_{n,\lam}$ is $(2|\lam|)^d$ and 
noting  that the  set of indices $n$ in~$\N^d$ for which $|n\lam|\leq K$ is bounded by $C_d K^d|\lam|^{-d}$
for some constant $C_d$ depending only on $d,$ 
we conclude that for small enough $\e,$ we have
\beq
\label {convergesimplecouchH_0demoeq1}
\Bigl | \cI_\e^+(k) -2^{-d} \int_{(\R_+)^d}  
\theta(\dot x,k)\, d\dot x\Bigr | \leq C_d\eta K^d.
\eeq
Of course, handling $\cI_\e^-(k)$ is strictly similar.
Because the set of $k$ in~$\Z^d$  with $|k|\leq K$  is finite (and independent of $\e$), 
this proves the lemma in the case where $\theta$ is compactly supported. 
\medbreak
To handle the general case, one may  fix some cut-off function $\psi:\R_+\to\R_+$ with value $1$ on $[0,1]$ and 
supported in $[0,2],$   and, for all $K>0,$  decompose $\theta$
into $$\theta= \theta_K+\theta^K\with\theta_K(\wh w)\eqdefa \psi(K^{-1}(|\lam|(2|n|+d)+|m-n|))\theta(\wh w).$$ 
The first part of the proof applies to $\theta_K$  and for all  positive real number~$\eta$, one may thus find some $\ep_{K,\eta}$ so that 
\begin{equation}\label{eq:un}
\biggl|\int_{\wh\H^d}\ep^{-1}\wh\chi(\ep^{-1}\lam)\theta_K(\wh w)\,d\wh w-\langle \mu_{\wh\H_0^d},\theta_K\rangle\biggr|
\leq\eta\ \hbox{ for }\ \ep<\ep_{K,\eta}.
\end{equation}
To bound  the term corresponding to $\theta^K,$ we shall use the fact that Condition \eqref{eq:condtheta} ensures that there exists some constant $C$ so that 
\begin{equation}\label{eq:thetadotx}
\forall  (\dot x,k)\in\wh\H^d_0\,,\ |\theta(\dot x,k)|\leq C(1+|\dot x|+|k|)^{-2d-1}.
 \end{equation}
 Now, we have, denoting $\R^d_{\mp}\eqdefa (\R_-)^d\cup(\R_+)^d,$
 $$
 \int_{\wh\H^d_0} |\theta^K(\dot x,k)|\,d\mu_{\wh\H_0^d}(\dot x,k)\leq 
 2^{-d}\biggl(\sum_{|k|\geq K} \int_{\R^d_{\mp}} |\theta(\dot x,k)|\,d\dot x
 +\sum_{k\in\Z^d}\int_{|\dot x|\geq K}  |\theta(\dot x,k)|\,d\dot x\biggr)\cdotp
$$
In light of \eqref{eq:thetadotx} and making an obvious change of variables, we get
$$\begin{aligned}
\sum_{|k|\geq K} \int_{\R^d_{\mp}} |\theta(\dot x,k)|\,d\dot x&\leq C\sum_{|k|\geq K} \int_{\R_+^d}
(1+|\dot x|+|k|)^{-2d-1}\,d\dot x\\
&\leq C\sum_{|k|\geq K} (1+|k|)^{-d-1} \int_{\R^d_+} (1+|\dot y|)^{-2d-1}\,d\dot y
\leq C K^{-1}.\end{aligned}
$$
Likewise, 
$$\begin{aligned}
\sum_{k\in\Z^d}\int_{|\dot x|\geq K}  |\theta(\dot x,k)|\,d\dot x&\leq C\sum_{k\in\Z^d}\int_{|\dot x|\geq K}(1+|\dot x|+|k|)^{-2d-1}\,d\dot x\\
&\leq C \sum_{k\in\Z^d} \frac1{(1+|k|)^{d+1}}\int_{|\dot y|>K/(1+|k|)} \frac{d\dot y}{(1+|\dot y|)^{2d+1}}\\
&\leq C \sum_{k\in\Z^d}   \frac1{(1+|k|)^{d+1}} \frac1{(1+K/(1+|k|))^{d+1}}\\
&\leq C K^{-1}.\end{aligned}
$$
Therefore, if we take $K$ large enough then one may ensure that 
\begin{equation}\label{eq:deux}
\bigl|\langle \mu_{\wh\H_0^d},\theta^K\rangle\bigr|\leq\eta.
\end{equation}
Finally, 
$$\begin{aligned}
\biggl|\int_{\wh\H^d}\ep^{-1}\wh\chi(\ep^{-1}\lam)\theta^K(\wh w)\,d\wh w\biggr|
&\leq \cJ_K^1(\ep)+\cJ_K^2(\ep)\with\\
 \cJ_K^1(\ep)&\eqdefa\ep^{-1}\int_\R \sum_{|k|\geq K} \sum_{n\in\N^d} \wh\chi(\ep^{-1}\lam)|\theta(\wh w)|\,|\lam|^dd\lam
\andf\\ \cJ_K^2(\ep)&\eqdefa\e^{-1} \int_\R \sum_{k\in\Z^d} \sum_{|n\lam|\geq K} \wh\chi(\ep^{-1}\lam)|\theta(\wh w)|\,|\lam|^dd\lam.
\end{aligned}
$$
Because $\theta$ satisfies \eqref{eq:condtheta}, we have
$$
 \cJ_K^1(\ep)\leq C \sum_{|k|\geq K} \sum_{n\in\N^d} \int_\R\ep^{-1}\wh\chi(\ep^{-1}\lam) (1+|k|+|\lam n|)^{-2d-1}\,|\lam|^dd\lam.
 $$
 Clearly,  because the sum below has  $\cO(|k|/|\lam|)^d$ terms, we may write
  $$\int_\R \ep^{-1}\wh\chi(\ep^{-1}\lam)  \sum_{|n\lam|\leq|k|} (1+|k|+|\lam n|)^{-2d-1}\,|\lam|^dd\lam  
  \lesssim (1+|k|)^{-d-1} 
  $$
  and, similarly, because  $$\sum_{|n\lam|\geq|k|} |\lam|^d(1+|k|+|\lam n|)^{-2d-1}
  \lesssim \sum_{|n\lam|\geq|k|} |\lam|^d(1+|\lam n|)^{-2d-1}\lesssim (1+|k|)^{-d-1},$$ we get
   $$\int_\R \ep^{-1}\wh\chi(\ep^{-1}\lam)  \sum_{|n\lam|\geq|k|} (1+|k|+|\lam n|)^{-2d-1}\,|\lam|^dd\lam  
  \lesssim (1+|k|)^{-d-1}. 
  $$
  Therefore
  $$ \cJ_K^1(\ep)\lesssim K^{-1}.  $$
  Proving that $\cJ_K^2(\ep)\lesssim K^{-1}$ relies on similar arguments. Putting together with\refeq{eq:un}
  and\refeq{eq:deux}, it is now easy to conclude the proof of the lemma.  
\end{proof}
\begin{proof} [Proof of  Theorem\refer {Fourierhorizontal}] 
Let~$\chi$ in~$\cS(\R)$ have a compactly supported  Fourier transform, and value $1$ at $0$
(hence the integral of $\wh\chi$ is~$2\pi$).  Let~$\theta:\wh\H^d\to\C$ be continuous and compactly
supported,
and set
$$
\cI_\ep(g,\theta)\eqdefa\langle  \cF_\H(g\otimes\chi(\e\cdot)),
\theta\rangle.
$$
  By definition of the Fourier transform of~$L^1$ functions, one may write:
\beno
\cI_\e(g, \theta) & =  & \int_{\H^d\times \wh \H^d}e^{-is\lam} \chi(\e s) \ov\cW(\wh w,Y)  g(Y) \theta(\wh w) \,dY\,ds\,d\wh w\\
& = &\int_{\wh \H^d}   \frac 1 {\e} \wh \chi\Bigl(\frac  \lam {\e}\Bigr)   G(\wh w) \theta(\wh w) \,d\wh w\quad\with 
G(\wh w)  \eqdefa  \int_{T^\star \R^d} \ov\cW(\wh w,Y)  g(Y) \,dY.
\eeno
As the function~$g$ is integrable on~$T^\star\R^d$,  Proposition\refer  {ProofCFL1_Prop1} implies that the (numerical) product~$G\theta$ is a continuous compactly supported function on~$\wh\H^d$. 
 Lemma\refer   {convergesimplecouchH_0} applied to this function~$G\theta$ implies that 
 $$
  \lim_{\e\rightarrow0} \cI_\e(g,\theta) 
  = 2\pi \int_{\wh \H^d_0} \cG_\H g(\dot x,k) \theta(\dot x,k)d\mu_{\wh \H^d_0} (\dot x,k).
$$
This means  that the measure~$\cF_\H (g\otimes\chi(\e\cdot)) d\wh w$ converges weakly to~$2\pi(\cG_\H g) d\mu_{\wh \H^d_0} $ which is exactly Theorem\refer {Fourierhorizontal}.
\end{proof}


\section{Computing the kernel~$\cK$}
\label {proofFormulacK}

We have already seen  in Remark \ref{rk:K0}  that $\cK_d(0,k,Y)=\delta_{0,k}$ for all $Y$ in $T^\star\R^d,$
so let  us now prove the symmetry identities pointed out in the introduction. 
The first relation in \eqref{eq:Ksym} stems  from the  observation that for all $(n,m,\lam)$ in $\wt \H^d$ 
 and $Y$ in~$T^\star\R^d,$ we have
$$
\cW(m,n,\lam,-Y)=\overline{\cW(n,m,\lam,Y)}.
$$
Therefore, for any $(\dot x,k)$ in $\wh\H^d_0$
 passing to the limit $(n,m,\lam)\to(\dot x,k)$ yields
\begin{equation}\label{eq:Kconj}
\cK_d(\dot x,-k,-Y)=\overline{\cK_d(\dot x,k,Y)}.
\end{equation}
In order to establish the second symmetry relation for $\cK_d,$
it suffices to notice that 
 \begin{equation}\label{eq:symW}
\forall (n,m,\lam,Y) \in \wt \H^d\times T^\star \R^d\,,\   \cW(n,m,\lam,Y)=(-1)^{|n+m|}\cW(m,n,-\lam,Y).
\end{equation}
and to pass to the limit $(n,m,\lam)\to(\dot x,k).$
\smallbreak
The last relation in \eqref{eq:Ksym} just follows from passing to the  limit $(n,m,\lam)\to(\dot x,k)$ in
\begin{equation}\label{eq:WWW}
\cW(n,m,-\lam,Y)
=\ov\cW(n,m,\lambda,Y).
\end{equation}
Identity \eqref{eq:KLap} is a consequence of Relation\refeq {DeltaWignerHermite}. Indeed,  observe that  for any smooth 
function~$f:T^\star\R^d\to\C$, we have
$$
e^{-is\lam} \D_\H \bigl (e^{is\lam}  f(Y)\bigr) = \Delta_Y f(Y) + 4i\lam\sum_{j=1}^d \cT_j f(Y) -4\lam^2 |Y|^2f(Y)\with
\cT_j \eqdefa \eta_j\partial_{y_j} -y_j\partial_{\eta_j}.$$
Taking~$f(Y)= \cW(\wh w,Y),$ using\refeq {DeltaWignerHermite} and  having~$(n,m,\lam)$ tend to~$(\dot x,k)$ yields
\beq
\label  {computcKdemoeq1}
\D_Y \cK_d (\dot x,k,Y)  = -4 |\dot x|  \cK_d (\dot x,k,Y).
\eeq

Relation\refeq{eq:Kk} is a consequence of\refeq  {Fourierhorizontaldemoeq112} which implies in particular that
$$
|\lam| (n_j-m_j) \cW(\wh w,Y) =  i\lam \cT_j \cW (\wh w,Y).
$$
Passing to the limit when~$(n,m,\lam)$ tends to~$(\dot x,k)$ ensures
\beq
\label  {computcKdemoeq2}
ik _j\cK_d(\dot x,k,Y)  =  {\rm sgn}(\dot x) \cT_j \cK_d(\dot x,k,Y) 
\eeq
which is exactly\refeq  {eq:Kk}.
\smallbreak 
Proving Identity\refeq{Convollam=0} is bit more involved. To achieve it, let us fix some function $\al$ of~$\cS(\R)$   and two functions~$g_1$ and~$g_2$  of~$\cS(T^\star\R^d)$. By definition of convolution and  Fourier transform, we have 
$$
\longformule{
\cF_\H\bigl( (g_1\otimes \al)\star (g_2\otimes \al) \bigr) (\wh w)
}
{ {}
=\int_{\H^d\times \H^d} e^{-is\lam} \ov\cW(\wh w,Y) g_1(Y-Y') \al\bigl(s-s'-2\s(Y',Y)\bigr) g_2(Y') \al(s')\, dw \, dw'.
}
$$
Integrating first with respect to~$s$ and next with respect to  $s'$ yields 
$$
\cF_\H\bigl( (g_1\otimes \al)\star (g_2\otimes \al) \bigr) (\wh w)
=\wh \al^2(\lam) \int_{(T^\star\R^d)^2} e^{2i\lam \s(Y,Y')} \ov\cW(\wh w,Y) g_1(Y-Y') g_2(Y')  \,dY\,dY'.
$$
{}From the fact that~$\s$ is symplectic, we infer that 
\beq
\label {FormulaconvWgene}
\begin{aligned}
& \cF_\H\bigl( (g_1\otimes \al)\star (g_2\otimes \al) \bigr) (\wh w)\\
& \qquad\qquad\qquad{} =\wh \al^2(\lam) \int_{(T^\star\R^d)^2} e^{2i\lam \s(Y_1,Y_2)}  
\ov\cW(\wh w,Y_1+Y_2)g_1(Y_1) g_2(Y_2)\,dY_1\,dY_2.
\end{aligned}
\eeq
Of course, because both $g_1\otimes \al$ and $g_2\otimes \al$ are in $\cS(\H^d),$ we are guaranteed, thanks
to the convolution formula \refeq  {newFourierconvoleq1}, that
$$
\cF_\H\bigl( (g_1\otimes \al)\star (g_2\otimes \al) \bigr) (n,n+k,\lambda)=  G_{12}\with
G_{12}  \eqdefa  \bigl(\cF_\H(g_1\otimes \al)\cdot \cF_\H(g_2\otimes \al)\bigr) (n,n+k,\lam).
$$
Now, we have, setting $k'=n+k-\ell$ in the second line,
$$\begin{aligned}
G_{12} & =   \sum_{\ell\in\N^d} \cF_\H(g_1\otimes \al) (n,\ell,\lam)  \cF_\H(g_2\otimes \al) (\ell,n+k,\lam) \\
 &=  \wh \al^2(\lam) \int_{(T^\star\R^d)^2} \sum_{k'\leq n+k} 
\ov\cW(n,n+k-k',\lam,Y_1) \ov\cW(n+k-k',n+k,\lam,Y_2)\\
&\hspace{10cm}\times g_1(Y_1)g_2(Y_2)\, dY_1\,dY_2.
\end{aligned}
$$
Hence, reverting to Relation\refeq {FormulaconvWgene} and keeping in mind that
the above computations hold true for any functions $\alpha,$ $g_1$ and $g_2$ in the Schwartz class, 
one may conclude that
$$
e^{-2i\lam \s(Y_1,Y_2)}   \cW(n,n+k,\lam,Y_1+Y_2) 
=  \sum_{k'\in\Z^d}  \cW(n,n+k-k',\lam,Y_1) \cW(n+k-k',n+k,\lam,Y_2).
$$
Taking advantage of the decay of~$\cW$ with respect to the variable~$k$, (for~$Y_1$ and~$Y_2$ in a given compact subset of~$T^\star \R^d$ by virtue of\refeq  {Fourierhorizontaldemoeq112}), we can pass to the limit for~$2\lam n$ tending to~$\dot x$ and~$\lam$ tending to~$0$.  This gives
\begin{equation}\label{eq:Kd}
\cK_d( \dot x,k,Y_1+Y_2) = \sum_{k'\in \Z^d} \cK_d( \dot x,k-k',Y_1)\,\cK_d( \dot x,k',Y_2)
\end{equation}
which is the generalization of Formula\refeq {Convollam=0} in any dimension.

\medbreak In order to fully benefit from 
 Relations\refeq {eq:KLap},\refeq{eq:Kk} and\refeq {Convollam=0} so as to eventually compute $\cK,$
 it is wise to introduce the following function~$\wt \cK$ on $\R\times\TT\times T^\star\R,$
 where $\TT$ denotes the one-dimensional torus: 
\beq
\label {wtKdef}
\wt \cK(\dot x ,z,Y) \eqdefa  \sum_{k\in \ZZ} \cK(\dot x, k,Y) e^{ikz}.
\eeq
{}From Relation\refeq  {Fourierhorizontaldemoeq112} (after having~$(n,m,\lam)$ tend to~$(\dot x,k)$), we infer that if~$(\dot x,Y)$ lies in any given bounded set $\cB,$ then
\begin{equation}\label{eq:fastdecayK}
\forall N \in \N\,,\ \sup_{(\dot x,k,Y)\in\cB} (1+|k|)^N | \cK(\dot x,k,Y) |<\infty.
\end{equation}
 Thus the series\refeq {wtKdef}  defines  a function~$\wt\cK$ on $\R\times\TT\times T^\star\R.$

\medbreak
 {}Furthermore, from \refeq  {Convollam=0} we infer immediately  that 
\beq
\label {computcKdemoeq4}
\wt \cK(\dot x ,z,Y_1+Y_2)  =  \wt \cK(\dot x ,z,Y_1) \, \wt \cK(\dot x ,z,Y_2),
\eeq
and,  in light  of \eqref{eq:Kconj}, we discover that 
for any $(\dot x,z,Y)$  in $\R\times\TT\times T^\star\R,$
\begin{equation}
\wt\cK(\dot x,z,-Y)=\overline{\wt\cK(\dot x,z,Y)}.
\end{equation}
Combined with  \eqref{eq:K0}    and \eqref{computcKdemoeq4}
this implies that for any couple~$(\dot x,z)$ in $\R\times\TT,$ the function~$Y\mapsto  \wt \cK( \dot x ,z,Y)$  is a character of $\R^2.$
Identifying $T^\star\R$ with $\R^2,$ we thus conclude that  there exists a function $\Phi=(\Phi_y,\Phi_\eta)$  from~$\R\times \TT$ to~$\R^2$ such that 
$$
\wt\cK (\dot x,z,Y)= e^{iY\cdot \Phi(\dot x,z)}= e^{i(y\Phi_y(\dot x,z)+\eta\Phi_\eta (\dot x,z))}.
$$
Taking advantage of\refeq {computcKdemoeq1} which implies that~$\cK$ is a smooth function of~$Y$ and arguing as above,  we find out that for any multi-index $\alpha= (\alpha_1, \alpha_2)$ in $\N^2$  and any~$(\dot x,k,Y)$  in some bounded set~$\cB,$ we have
$$
\forall N \in \N\,,\ \sup_{(\dot x,k,Y)\in\cB} (1+|k|)^N |\partial^\alpha_{\dot x,Y} \cK(\dot x,k,Y) |<\infty.
$$
Therefore  invoking Relation\refeq {eq:Kk},  we deduce that  for any positive $\dot x$
$$
\partial_z  \wt \cK(\dot x,z,Y)= \eta\partial_y \wt \cK (\dot x,z,Y) - y \partial_\eta \wt \cK (\dot x,z,Y)
$$
which entails that $\partial_z\Phi(\dot x, z) =  R \Phi(\dot x, z)$
where~$R$ denotes the rotation of angle~$\pi/ 2$. Hence
$$
\Phi(\dot x, z) =  R(z)\wt \Phi(\dot x)
$$
where~$R(z)$ denotes the rotation of angle~$z$. Now, Relation\refeq {computcKdemoeq1} 
ensures that~$|\wt \Phi(\dot x) | = 2|\dot x|^{\frac 12},$ and thus there exists a function
$\phi$ from $\R$ to the unit circle of~$\R^2$ so that for positive~$\dot x$
\beq
\label {computcKdemoeq5}
\wt\cK(\dot x,z,Y) = e^{2i|\dot x|^{\frac 12} Y\cdot (R(z) \phi(\dot x))}.
\eeq
Let us finally establish  Identity\refeq  {Y2FouriercK}.
It relies on the study of the action of the Fourier transform on the \emph{weight function} $M^2$ defined by
$$
 (M^2f)(Y,s)\eqdefa|Y|^2f(Y,s).
$$
For any functions~$g$ in~$\cS(T^\star \R)$ and~$\psi:\R_+\times\Z\to\R,$ smooth and compactly supported in~$[r_0,\infty[\times \ZZ$  for some positive real number~$r_0$, let us define 
\beno
\Theta_\psi (\wh w) &\eqdefa & \psi\bigl(|\lam|(n+m+1),m-n\bigr) \andf\\
\cB(g,\psi)  & \eqdefa &    \int_{T^\star \R\times \wh \H_0^1} |Y|^2 \cK(\dot x,k,Y) g(Y) \psi(\dot x,k)\, dY d \mu_{\wh\H^1_0} (\dot x,k).
\eeno
Lemma\refer {convergesimplecouchH_0} implies that  if $\wh\chi:\R\to\R$ is  integrable, supported in $[-1,1]$ and with integral~$1$,~then
\beno
\cB(g,\psi)  & = & \lim_{\e\rightarrow 0} \cB_\e(g,\psi) \with \\
\cB_\e(g,\psi) & \eqdefa  &  \int_{T^\star \R} g(Y) \int_{\R} \sum_{(n,m)\in \N^2} |Y|^2 \cW(n,m,\lam,Y) 
\Theta_\psi (n,m,\lam)   \frac 1 \e  \wh \chi\Bigl(\frac \lam \e\Bigr) \,|\lam| \,d\lam\,dY.
\eeno
The following lemma gives a formula for~$ |Y|^2 \cW(\wh w,Y)$.
\begin{lemma}
 \label {Y2WignerHermite}
{\sl
For all~$\wh w$ in~$\wt\H^d$ and $Y$ in~$ T^\star\R^d,$ we have 
 $$|Y|^2\cW(\wh w,Y)  = -\wh\D \cW(\cdot ,Y) (\wh w) \with $$ \vspace{-8mm}
 \begin{multline} \label {decayWignerHermiteeq1}
 \wh \D \theta(\wh w) \eqdefa - \frac 1{2|\lam|} ( |n+m| +d) \theta(\wh w) 
\\[-1ex]+\frac 1 {2|\lam|} \sum_{j=1} ^d \Bigl\{ \sqrt {(n_j+1) (m_j+1)}\, \theta(\wh w^+_j) +\sqrt {n_jm_j}\, \theta(\wh w^-_j)\Bigr\}
 \end{multline}
 where~$\wh w^\pm_j \eqdefa (n\pm \d_j, m\pm\d_j, \lam)$. 
}
\end{lemma}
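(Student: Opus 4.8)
The lemma is purely computational; everything follows from the integral representation \eqref{definWigner} of $\cW$ and from the Hermite recurrence relations recalled in the Appendix. The plan is the following. First I would reduce to dimension one: since both the Hermite functions and the exponential $e^{2i\lam\langle\eta,z\rangle}$ factor over the coordinates, \eqref{definWigner} factors as $\cW(\wh w,Y)=\prod_{j=1}^{d}\cW_1(n_j,m_j,\lam,y_j,\eta_j)$, where $\cW_1$ denotes the one-dimensional Wigner function (all factors carrying the \emph{same} parameter $\lam$). As $|Y|^2=\sum_{j=1}^d(y_j^2+\eta_j^2)$, it then suffices to prove the one-dimensional identity $(y^2+\eta^2)\,\cW_1(n,m,\lam,Y)=\tfrac{n+m+1}{2|\lam|}\,\cW_1(n,m,\lam,Y)-\tfrac1{2|\lam|}\bigl(\sqrt{(n+1)(m+1)}\,\cW_1(n+1,m+1,\lam,Y)+\sqrt{nm}\,\cW_1(n-1,m-1,\lam,Y)\bigr)$ (with the convention $H_p=0$ for $p<0$): multiplying by $\prod_{j'\neq j}\cW_1$ and summing over $j$, the diagonal terms reassemble into $\tfrac1{2|\lam|}(|n+m|+d)\cW(\wh w,Y)$ and the shifted ones into $\cW(\wh w^\pm_j,Y)$, which is exactly $-\wh\D\cW(\cdot,Y)(\wh w)$.

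To prove the one-dimensional identity I would set $u=y+z$, $v=z-y$ and split $|Y|^2=y^2+\eta^2$, treating the two pieces separately in \eqref{definWigner}. For the $\eta^2$ piece, use $\eta^2 e^{2i\lam\eta z}=-\tfrac1{4\lam^2}\,\partial_z^2 e^{2i\lam\eta z}$ and integrate by parts twice in $z$ (the boundary terms vanish by the Schwartz decay of Hermite functions); then expand $\partial_z^2\bigl(H_{n,\lam}(u)H_{m,\lam}(v)\bigr)=H_{n,\lam}''(u)H_{m,\lam}(v)+2H_{n,\lam}'(u)H_{m,\lam}'(v)+H_{n,\lam}(u)H_{m,\lam}''(v)$ and replace the second derivatives via the rescaled harmonic oscillator identity \eqref{relationsHHermiteD}, i.e. $-\tfrac1{4\lam^2}H_{n,\lam}''(u)=-\tfrac14\,u^2H_{n,\lam}(u)+\tfrac{2n+1}{4|\lam|}H_{n,\lam}(u)$ (and likewise in $v$). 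For the $y^2$ piece, write $y=\tfrac12(u-v)$, so that $y^2H_{n,\lam}(u)H_{m,\lam}(v)=\tfrac14\bigl(u^2H_{n,\lam}(u)H_{m,\lam}(v)-2(MH_{n,\lam})(u)(MH_{m,\lam})(v)+v^2H_{n,\lam}(u)H_{m,\lam}(v)\bigr)$.

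Adding the two contributions produces the first cancellation: the $-\tfrac14(u^2+v^2)H_{n,\lam}(u)H_{m,\lam}(v)$ arising from the $\eta^2$ piece is exactly killed by the $\tfrac14 u^2H_{n,\lam}(u)H_{m,\lam}(v)+\tfrac14 v^2H_{n,\lam}(u)H_{m,\lam}(v)$ arising from the $y^2$ piece. What is left under the integral is $\tfrac{n+m+1}{2|\lam|}H_{n,\lam}(u)H_{m,\lam}(v)-\tfrac12(MH_{n,\lam})(u)(MH_{m,\lam})(v)-\tfrac1{2\lam^2}H_{n,\lam}'(u)H_{m,\lam}'(v)$. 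Expanding $MH_{n,\lam}$ and $\partial H_{n,\lam}$ in terms of $H_{n\pm1,\lam}$ through the first-order recurrences (see \eqref{relationsHHermiteCAb}), the second cancellation occurs: the cross terms $H_{n+1,\lam}(u)H_{m-1,\lam}(v)$ and $H_{n-1,\lam}(u)H_{m+1,\lam}(v)$ drop out between the two products, leaving precisely $-\tfrac1{2|\lam|}\bigl(\sqrt{(n+1)(m+1)}\,H_{n+1,\lam}(u)H_{m+1,\lam}(v)+\sqrt{nm}\,H_{n-1,\lam}(u)H_{m-1,\lam}(v)\bigr)$. Integrating back against $e^{2i\lam\eta z}$ turns these into $\cW_1(n\pm1,m\pm1,\lam,Y)$, which is the desired one-dimensional identity.

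I expect no genuine obstacle here: the argument is entirely a bookkeeping exercise. The points that demand some care are the two exact cancellations above (keeping track of the Hermite recurrence coefficients without error), the justification of the double integration by parts and of differentiating under the integral sign (both immediate from the rapid decay of Hermite functions), and keeping $\lam$ and $|\lam|$ straight throughout — as it must, the final formula depends only on $|\lam|$.
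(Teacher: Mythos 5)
Your proof is correct and follows essentially the same route as the paper's: convert $\eta^2$ into $-\tfrac1{4\lam^2}\partial_z^2$ acting on the exponential and integrate by parts, use the rescaled oscillator identity\refeq{relationsHHermiteD} to absorb the quadratic terms, and then the recurrences\refeq{relationsHHermiteCAb} with the cross-term cancellation to produce the shifted Wigner functions. The only difference is cosmetic: you reduce to dimension one by tensorization before computing, whereas the paper performs the same computation directly in dimension $d$ via the identity $4|y|^2=|y+z|^2+|y-z|^2+2(y+z)\cdot(y-z)$.
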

\begin{proof}
{}From the definition of~$\cW$ and  integrations by parts, we get  
\beno
|Y|^2\cW(\wh w,Y) & =  &\int_{\R^d} \Bigl(|y|^2-\frac 1 {4\lam^2} \D_z\Bigr) \bigl(e^{2i\lambda\langle \eta,z\rangle}\bigr)  H_{n,\lam} (y+z) H_{m,\lam} (-y+z)\, dz \\
&= & \int_{\R^d} e^{2i\lambda\langle \eta,z\rangle} |\lam|^{\frac d2} \cI(\wh w,y,z) \,dz \\\with
 \cI(\wh w,y,z)   &\eqdefa &  \Bigl(|y|^2-\frac 1 {4\lam^2} \D_z\Bigr)\bigl (H_{n} (|\lam|^{\frac 12} (y+z))  H_{m} (|\lam|^{\frac 12} (-y+z))\bigr).
\eeno
Using Leibniz formula, the chain rule and  $4|y|^2=|y+z|^2+|y-z|^2+2(y+z)\cdot(y-z),$ we get 
\beno
\cI(\wh w,y,z) & = &  -\frac 1 {4\lam^2} \bigl( (\D_z-\lam^2 |y+z|^2) H_{n} (|\lam|^{\frac 12} (y+z)) \bigr)  H_{m} (|\lam|^{\frac 12} (-y+z)) \\
&&{}
-\frac 1 {4\lam^2} \bigl( (\D_z-\lam^2 |y-z|^2) H_{m} (|\lam|^{\frac 12} (-y+z)) \bigr)  H_{n} (|\lam|^{\frac 12} (y+z)) \\
&&{}-\frac 1 {2|\lam|}\sum_{j=1}^d(\partial_j H_{n}) (|\lam|^{\frac 12} (y+z)) (\partial_j H_{m}) (|\lam|^{\frac 12} (-y+z))\\
&&{}-\frac 12 (z+y)\cdot(z-y) H_{n} (|\lam|^{\frac 12} (y+z))  H_{m} (|\lam|^{\frac 12} (-y+z)).
\eeno
Using\refeq{relationsHHermiteD}, we end  up with
\beno
\cI(\wh w,y,z) &= & \frac 1{2|\lam|} (|n+m|+d) H_{n} (|\lam|^{\frac 12} (y+z))  H_{m} (|\lam|^{\frac 12} (-y+z))
\\
&&\qquad \qquad{}
-\frac 1{2|\lam|} \sum_{j=1}^d\Bigl\{
(\partial_j H_{n}) (|\lam|^{\frac 12} (y+z)) (\partial_j H_{m}) (|\lam|^{\frac 12} (-y+z))\\
&&\qquad \qquad\qquad \qquad\qquad \qquad{}
+(M_j H_{n}) (|\lam|^{\frac 12} (y+z)) (M_j H_{m}) (|\lam|^{\frac 12} (-y+z))\Bigr\}\cdotp
\eeno
Then, taking advantage of\refeq{relationsHHermiteCAb}, 
 we get Identity\refeq  {decayWignerHermiteeq1}.
\end{proof}

\medbreak
Let us resume to  the proof of Identity\refeq {Y2FouriercK}.   Using the above lemma for $d=1$ and performing 
obvious changes of  variable in the sum give
\beno
\cB_\e(g,\psi) & =  & - \int_{T^\star \R} g(Y) \int_{\R} \sum_{(n,m)\in \N^2} (\wh\D \cW(\cdot,Y) ) (n,m,\lam)
\Theta_\psi (n,m,\lam)  \frac 1 \e  \wh \chi\Bigl(\frac \lam \e\Bigr)|\lam|\, d\lam\,dY\\
& = &  -\int_{T^\star \R} g(Y) \int_{\R} \sum_{(n,m)\in \N^2}  \cW(n,m,\lam, Y)
(\wh \D\Theta_\psi) (n,m,\lam)  \frac 1 \e  \wh \chi\Bigl(\frac \lam \e\Bigr) |\lam|\,d\lam\,dY\,.
\eeno
The key to proving the convergence of $\cB_\e$ for $\e\to0$ is the asymptotic
description of  the operator~$\wh\D$ when~$\lam$ tends to~$0,$ given in the following lemma: 
\begin{lemma}
\label {whDeltaoverlim0}
{\sl
Let~$\psi$ be  a smooth function compactly supported in~$[r_0,\infty[\times \ZZ$  for some positive real number~$r_0$. Then
$$
\wh\D \Theta_\psi (n,m,\lam)   \simH 1   \Theta_{L\psi} (n,m,\lam) \with
(L\psi) (\dot x,k)  \eqdefa   \dot x \psi'' (\dot x,k) +\psi'(\dot x,k)-\frac {k^2}{4\dot x} \psi(\dot x, k) 
$$
where the notation~$\Theta_1 \simH p \Theta_2$ means that for any positive integer~$N$, there is a constant~$C_{N, p}$ such that for all~$(n,m,\lam)$ in~$\N^2\times ]0,\infty[ $ satisfying 
$$ 
\lam (n+m)\geq \frac {r_0}  2\andf \lam\leq \lam_0/(1+|n-m|),
$$
 with   a sufficiently small positive real number  $\lam_0$ depending only on $r_0$, we have
$$
\bigl|\Theta_1(n,m,\lam) -\Theta_2(n,m, \lam)\bigr|\leq C_{N, p} \, \lam^p  \, \bigl(1+|\lam|(|n+m|+1)+|m-n|\bigr)^{-N}.
$$
}
\end{lemma}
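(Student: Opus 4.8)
The plan is to compute $\wh\D\Theta_\psi$ fully explicitly by means of Lemma~\refer{Y2WignerHermite}, and then to recognize $\Theta_{L\psi}$ as the leading term of a Taylor expansion in the small parameter $|\lam|.$ The starting observation is that the three index pairs $(n,m),$ $(n+1,m+1)$ and $(n-1,m-1)$ appearing in Formula~\eqref{decayWignerHermiteeq1} all share the same value $k\eqdefa m-n,$ so that, setting $\dot x\eqdefa|\lam|(n+m+1),$ we get $\Theta_\psi(n,m,\lam)=\psi(\dot x,k),$ $\Theta_\psi(\wh w^+_1)=\psi(\dot x+2|\lam|,k)$ and $\Theta_\psi(\wh w^-_1)=\psi(\dot x-2|\lam|,k).$ Besides, since $n+m+1=\dot x/|\lam|$ and $m-n=k,$ a short computation gives
$$
\sqrt{(n+1)(m+1)}=\frac1{2|\lam|}\sqrt{(\dot x+|\lam|)^2-\lam^2k^2}\andf\sqrt{nm}=\frac1{2|\lam|}\sqrt{(\dot x-|\lam|)^2-\lam^2k^2}
$$
(both correct, with the usual convention, even when $n$ or $m$ vanishes). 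Hence, for $d=1,$ Formula~\eqref{decayWignerHermiteeq1} reads
$$
\longformule{\wh\D\Theta_\psi(n,m,\lam)=-\frac{\dot x}{2\lam^2}\,\psi(\dot x,k)}{{}+\frac1{4\lam^2}\Bigl(\sqrt{(\dot x+|\lam|)^2-\lam^2k^2}\,\psi(\dot x+2|\lam|,k)+\sqrt{(\dot x-|\lam|)^2-\lam^2k^2}\,\psi(\dot x-2|\lam|,k)\Bigr)\cdotp}
$$

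The second step is to expand to second order in $|\lam|.$ Writing $h\eqdefa2|\lam|,$ the last parenthesis divided by $4\lam^2=h^2$ is of the form $h^{-2}\bigl(f(h)+f(-h)\bigr)$ with $f(h)\eqdefa a(h)\,\psi(\dot x+h,k)$ and $a(h)\eqdefa\sqrt{(\dot x+h/2)^2-h^2k^2/4},$ so that only \emph{even} powers of $h$ occur. The binomial formula gives $a(h)=\dot x+\tfrac h2-\tfrac{k^2}{8\dot x}h^2+O(h^3),$ while Taylor's formula gives $\psi(\dot x+h,k)=\psi+h\psi'+\tfrac{h^2}2\psi''+O(h^3)$ (primes standing for $\partial_{\dot x},$ $k$ being frozen); multiplying out, the term $2\dot x\psi/h^2=\dot x\psi/(2\lam^2)$ cancels $-\dot x\psi/(2\lam^2),$ the $O(h)$ contributions cancel by parity, and the surviving $O(1)$ term is exactly $\dot x\psi''+\psi'-\tfrac{k^2}{4\dot x}\psi=(L\psi)(\dot x,k)=\Theta_{L\psi}(n,m,\lam).$ Thus $\wh\D\Theta_\psi(n,m,\lam)=\Theta_{L\psi}(n,m,\lam)+O(\lam^2),$ which is even sharper than the claimed relation $\wh\D\Theta_\psi\simH 1 \Theta_{L\psi}.$

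It remains to upgrade these `$O$'s to the uniform weighted bound of the statement, and this is precisely what the two hypotheses are tailored for. The condition $\lam(n+m)\geq r_0/2$ forces $\dot x\geq r_0/2,$ which keeps us at a fixed positive distance from the singularity $\dot x=0$ of $L$ and, together with $|\lam|\leq\lam_0,$ confines $\dot x\pm2|\lam|$ to a compact subset of $(0,\infty);$ the condition $\lam(1+|n-m|)\leq\lam_0$ gives $\lam^2k^2\leq\lam_0^2,$ hence for $\lam_0$ small enough in terms of $r_0$ the quantity under each square root is $\geq\dot x^2/2,$ so the binomial expansion of $a$ is legitimate with a remainder bounded by $C(r_0)|\lam|^3$ uniformly, the $\psi$-expansion contributing remainders controlled by the (bounded) sup norms of the derivatives of $\psi.$ Finally, since $\psi$ and all its derivatives are bounded and supported in a fixed compact subset $[r_0,R]\times\{|k|\leq R\}$ of $[r_0,\infty[\times\ZZ,$ both $\wh\D\Theta_\psi$ and $\Theta_{L\psi}$ vanish unless $1+|\lam|(|n+m|+1)+|m-n|\leq C(r_0,R),$ on which region the weight $\bigl(1+|\lam|(|n+m|+1)+|m-n|\bigr)^{-N}$ is bounded below; the estimate $\bigl|\wh\D\Theta_\psi-\Theta_{L\psi}\bigr|\lesssim|\lam|$ coming from the expansion therefore immediately yields the asserted bound $C_{N,1}|\lam|\bigl(1+|\lam|(|n+m|+1)+|m-n|\bigr)^{-N},$ and it is trivial elsewhere. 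I expect the bookkeeping of the various remainder terms — higher $\dot x$-derivatives of $\psi$ weighted by negative powers of $\dot x$ and powers of $\lam k$ — to be the most tedious part of the write-up, but no genuine obstacle arises, since everything takes place on a fixed compact set bounded away from $\dot x=0.$
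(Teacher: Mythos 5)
Your proposal is correct and follows essentially the same route as the paper: both write out $\wh\D\Theta_\psi$ explicitly from \eqref{decayWignerHermiteeq1}, express $\lam\sqrt{(n+1)(m+1)}$ and $\lam\sqrt{nm}$ in terms of $\lam(n+m)$ (or $\dot x$) and $k,$ and Taylor-expand to second order in $\lam,$ the weight being harmless because $\psi$ is compactly supported. Your centering at $\dot x=\lam(n+m+1)$ together with the parity of $f(h)+f(-h)$ is only a mild streamlining of the paper's term-by-term expansion, and it even gives an $O(\lam^2)$ error, slightly sharper than the $O(\lam)$ required.
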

\begin{proof}
By definition of the operator~$\wh \D$, and for~$\lam>0,$  we have, denoting $k\eqdefa m-n$
and~$y\eqdefa \lam(n+m),$
$$-2\lam^2 \wh\D \Theta_\psi (\wh w)  = (y\!+\!\lam) \psi (y\!+\!\lam,k) 
- \lam\sqrt {(n+1)(m+1)} \, \psi(y\!+\!3\lam,k)- \lam\sqrt {nm} \, \psi (y-\lam,k).$$
 Using that 
 $$
 \lam ^2 n m = \frac {\lam^2} 4 (n+m)^2 - \frac   {\lam^2} 4 (m-n)^2 = \frac {y^2} 4 -  \frac  {\lam^2} 4 k^2\,,
 $$
 we get that 
$$
\lam\sqrt {(n+1)(m+1)} =    \frac y 2 \sqrt {1+\frac {4\lam} y +\Bigl(\frac  {4- k^2} { y^2}\Bigr) \lam^2 } 
\simH 3  \frac y 2  +\lam  -\frac {k^2} {4y} {\lam^2}  \andf
\lam\sqrt {nm} 
\simH 3 \frac y 2  -\frac {k^2}  {4y}  {\lam^2}.
$$
Writing the Taylor expansion  for~$\psi$ gives (omitting the dependency with respect to $k$ for notational simplicity),
\beno
(y+\lam) \psi (y+\lam)  & \simH 3 & y\psi(y) +\bigl( \psi(y)+y\psi'(y) ) \lam + \Bigl( \psi'(y) +\frac y 2 \psi''(y)\Bigr)\lam^2 \,, \\
- \lam\sqrt {(n+1)(m+1)} \, \psi(y+3\lam) & \simH 3 & -\frac y 2 \psi(y) -\Bigl( \psi(y) +\frac 3 2  y\psi'(y)\Bigr) \lam  \\
&&\!\!\!\!\!\!\!\!{} 
- \Bigl(\frac 9 4  y\psi''(y)+ 3\psi'(y)  -  \frac {k^2}  {4y}\psi(y)\Bigr) \lam^2\andf\\
- \lam\sqrt {nm} \, \psi(y- \lam) & \simH 3 &  -\frac y 2 \psi(y)  +\frac 12  y\psi'(y)\lam 
- \Bigl( \frac y {4} \psi''(y) - \frac {k^2}  {4y} \psi(y)\Bigr)  {\lam^2}.
\eeno 
By summation of these three identities, we get
$$
- 2 \lam^2 \wh\D \Theta_\psi (\wh w)  \simH 3  -  \Bigl( 2y\psi''(y) +2\psi'(y) - \frac {k^2} {2y} \psi(y)\Bigr) \lam^2,
$$
whence  the lemma.
\end{proof}

\medbreak
From the above  lemma,  it is easy to complete the proof of  Identity\refeq {Y2FouriercK}. 
Indeed, we get 
$$
\displaylines{
\cB_\e(g,\psi) =  - \int_{T^\star \R} g(Y)\!\!
 \int_{\R} \!\sum_{(n,m)\in \N^2} \!\! \cW(n,m,\lam, Y)
L\psi\bigl(|\lam|(n+m), m-n\bigr)  \frac 1 \e  \wh \chi\Bigl(\frac \lam \e\Bigr)|\lam| \, d\lam\,dY
 \hfill\cr\hfill+ \cR_\e(g,\psi),}
$$
where the remainder $\cR_\ep$ is such that for all $N\in\N$ there exists $C_N$ so that
$$\bigl|\cR_\e(g,\psi)\bigr| \leq C_N \|g\|_{L^1(T^\star\R^d)} \sum_{(n,m)\in \N^2} \int_{\R} |\lam|  \bigl(1+|\lam|(|n+m|+1)+|m-n|\bigr)^{-N} \frac 1 \e  |\wh \chi| \Bigl(\frac \lam \e\Bigr) |\lam| d\lam \,.
$$
Taking~$N$ large enough, we find out that
$$
 \sum_{(n,m)\in \N^2} \int_{\R} |\lam|  \bigl(1+|\lam|(|n+m|+1)+|m-n|\bigr)^{-N} \frac 1 \e  |\wh \chi| \Bigl(\frac \lam \e\Bigr) |\lam| \,d\lam
 \leq C_N\int_\R \frac {|\lam|}  \e  |\wh \chi| \Bigl(\frac \lam \e\Bigr) d\lam\leq C_N \e.
$$
Then Lemma\refer  {convergesimplecouchH_0} ensures 
$$
\cB(g,\psi)   = - \int_{T^\star \R\times \wh \H_0^1} \cK(\dot x,k,Y) g(Y)
\Bigl(\dot x \psi'' (\dot x,k) +\psi'(\dot x,k)-\frac {k^2}{4\dot x} \psi(\dot x, k)\Bigr) dY \,d \mu_{\wh\H^1_0} (\dot x,k).
$$
Integration by parts yields 
$$
\cB(g,\psi)  =  \int_{T^\star \R\times \wh \H_0^1} g(Y)
\Bigl(\frac {k^2}{4\dot x} \cK(\dot x, k,Y)-\partial_{\dot x}\cK(\dot x,k,Y)-\dot x \partial_{\dot x} ^2 \cK (\dot x,k,Y)\Bigr)  \psi(\dot x, k) \,dY \,d \mu_{\wh\H^1_0} (\dot x,k).
$$
Using the fact that the above equality holds true for all $g$ in $\cS(T^\star\R)$ and for functions~$\psi$
smooth and compactly supported in~$[r_0,\infty[\times\Z$ for some $r_0>0,$ and combining with
a density argument, one can  conclude to Identity\refeq {Y2FouriercK}  for all positive $\dot x$ and~$k$ in~$\Z.$
\medbreak
In order to complete the proof of\refeq{FourierL1basiceq2b}, let us translate\refeq {Y2FouriercK} 
in terms of $\wt\cK.$ We have
$$
\frac1{4\dot x}\partial_z^2\wt\cK +\partial_{\dot x}(\dot x\partial_{\dot x}\wt\cK)+|Y|^2\wt\cK=0.
$$
Now,  plugging the ansatz\refeq{computcKdemoeq5} into the above relation yields for any positive $\dot x$, any~$k$ in~$\Z$ and any~$Y$ in~$ T^\star\R,$
$$\displaylines{\quad
|Y|^2=
 \bigl(Y\cdot (R'(z)\phi(\dot x))\bigr)^2
+\bigl(Y\cdot(R(z)\phi(\dot x))+2\dot x Y\cdot(R(z)\phi'(\dot x))\bigr)^2
\hfill\cr\hfill
-4i\sqrt{\dot x} Y\cdot(R(z)\phi'(\dot x))-2i\dot x^{3/2}Y\cdot(R(z)\phi''(\dot x)).\quad}
$$
Taking the imaginary part implies that $\phi$ satisfies  
$$\dot x\phi''(\dot x)+2\phi'(\dot x)=0\quad\hbox{for }\ \dot x>0.
$$
Now, as $\phi$ is valued in the unit circle, this implies that $\phi$ is a constant. 
Therefore there exists some number $z_0$ in~$(-\pi,\pi]$ so that for any positive~$\dot x$, any~$z$ in~$\R$ and any~$Y$ in~$T^\star\R,$ we have
$$
\wt\cK(\dot x,z,Y) = e^{2i|\dot x|^{\frac 12}(y\cos(z+z_0)+\eta\sin(z+z_0))}.
$$
Inverse Fourier theorem for periodic functions implies that 
$$
\cK(\dot x,k,Y)=\frac1{2\pi}\int_{-\pi}^\pi  e^{2i|\dot x|^{\frac 12}(y\cos(z+z_0)+\eta\sin(z+z_0))} e^{-ikz}\,dz.
$$
In order to compute the value of $z_0,$ one may take advantage of the symmetry relations in\refeq{eq:Ksym} that imply
\begin{equation}\label{eq:ksym2}
\cK(\dot x,-k,y,-\eta)=(-1)^k\cK(\dot x,k,y,\eta).
\end{equation}
Now, the above formula for $\cK$ and an obvious change of variable give
$$
\begin{aligned}
2\pi\cK(\dot x,-k,y,-\eta)&= \int_{-\pi}^\pi e^{ikz}  e^{2i|\dot x|^{\frac 12}(y\cos(z+z_0)-\eta\sin(z+z_0))}\,dz\\
&= \int_{-\pi}^\pi e^{ik(\pi-z)}  e^{2i|\dot x|^{\frac 12}(y\cos(\pi-z+z_0)-\eta\sin(\pi-z+z_0))}\,dz\\
&=(-1)^k \int_{-\pi}^\pi e^{-ikz}  e^{-2i|\dot x|^{\frac 12}(y\cos(z-z_0)+\eta\sin(z-z_0))}\,dz.
\end{aligned}
$$
Hence \eqref{eq:ksym2} is fulfilled for all positive~$\dot x$,~$k$ in~$\Z$ and~$(y,\eta)$ in~$ T^\star\R$ if and only if 
$$
\forall  z\in(-\pi,\pi)\,,\  \cos(z+z_0)=-\cos(z-z_0)\quad\hbox{and} \quad\sin(z+z_0)=-\sin(z-z_0)
$$
which is equivalent to $z_0\equiv \frac\pi2 [\pi].$ 
Hence there exists $\e\in\{-1,1\}$ so that
$$
\wt\cK(\dot x,z,Y) = e^{2i\e\sqrt{\dot x}(y\sin z-\eta\cos z)}.
$$
To determine the value of $\ep,$  one may use the fact that for all  positive~$\dot x$ and~$\eta$ in~$\R,$ the above formula implies that 
$$
\sum_{k\in\Z} \cK(\dot x,k,(0,\eta))=\wt\cK(\dot x,0,(0,\eta))=e^{-2i\ep\sqrt{\dot x}\,\eta}
=\cos(2\sqrt{\dot x}\,\eta)-i\ep\sin(2\sqrt{\dot x}\,\eta).
$$
Now, from the expansion of $\cK$ given in\refeq{definPhaseFlambda=0}, we infer that for all $\eta\in\R$ and $\dot x>0,$
$$
\wt\cK(\dot x,0,(0,\eta))=\sum_{\ell_1\in\N}\sum_{|k|\leq\ell_1} \frac{i^{\ell_1}}{\ell_1!} F_{\ell_1,0}(k) \eta^{\ell_1}
 \dot x^{\frac{\ell_1}2}.
$$
Note that the imaginary part of the term corresponding to $\ell_1$ is positive (indeed $F_{1,0}(k)$ is positive),
which implies that $\ep=-1$.
This completes the proof of  Identity\refeq {FourierL1basiceq2b} in the case where $\dot x$ is non negative.
The negative case just follows from\refeq{eq:Ksym}. Thus the whole Theorem\refer {FourierL1basic} is proved.


\section {Some properties of operator~$\cG_\H$}
\label {StutyofcGH}

We end this paper with a short presentation of  basics properties of the transformation~$\cG_\H,$ 
that highlight some  analogy (but also some difference)  with  the classical Fourier transform on~$T^\star\R^d$. 
The main result of this section reads as follows.
\begin{theorem}
\label {FourierhorizontalMore}
{\sl
The operator $\cG_\H$ maps continuously~$L^1(T^\star\R^d)$ to the space~$\cC_0(\wh\H^d_0)$ 
of continuous functions on $\wh\H^d_0$ going to $0$ at infinity and, for any  couple 
$(f,g)$ of functions in~$L^1(T^\star\R^d),$ we have the convolution identity:
\begin{equation}\label{eq:convGH}
\cG_\H(f\star g)(\dot x,k)=\sum_{k'\in\Z^d} \cG_\H f(\dot x,k-k')\,\cG_\H(\dot x,k')
\quad\hbox{for all }\ (\dot x,k)\in\wh\H^d_0.
\end{equation}
Moreover, for any $g$  in~$\cS(T^\star\R^d),$  we have the following inversion formula:
$$
g(Y) = \Bigl(\frac 2 \pi \Bigr) ^d \int_{\wh \H^d_0} \cK_d  (\dot x,k,Y) \cG_\H g(\dot x,k) \,d\mu_{\wh \H^d_0} (\dot x,k).
$$
Finally,  the following Fourier-Plancherel   identity holds true:
$$
\forall g\in \cS(T^\star\R^d)\,,\ \|g\|_{L^2(T^\star\R^d)} ^2 =  \Bigl(\frac 2 \pi \Bigr) ^d  \|\cG_\H g\|_{L^2(\wh\H_0^d)}^2.
$$
}
\end{theorem}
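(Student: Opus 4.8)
\medbreak\noindent\textbf{Proof strategy.}
The first two assertions will come readily from properties of the kernel $\cK_d$ that are already available, while the inversion formula — and hence the Plancherel identity — will be obtained by transferring the corresponding statements for $\cF_\H$ through Theorem\refer{Fourierhorizontal}. For the mapping property, the starting point is that the explicit formula \eqref{FourierL1basiceq2b} gives $|\cK_d(\dot x,k,Y)|\le1$ for all $(\dot x,k,Y)$, whence $\|\cG_\H g\|_{L^\infty(\wh\H^d_0)}\le\|g\|_{L^1(T^\star\R^d)}$; continuity of $\cG_\H g$ on $\wh\H^d_0$ then follows from the continuity of $(\dot x,k)\mapsto\cK_d(\dot x,k,Y)$ (clear from \eqref{FourierL1basiceq2b}, or from Proposition\refer{ProofCFL1_Prop1}) and dominated convergence. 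To get the decay at infinity one may, by the $L^1\to L^\infty$ bound and density, assume $g\in\cS(T^\star\R^d)$; then I would integrate by parts in \eqref{eq:KLap} and \eqref{eq:Kk} (and their $d$-dimensional forms \eqref{computcKdemoeq1},\eqref{computcKdemoeq2}), using that $\D_Y$ is formally self-adjoint and each $\cT_j=\eta_j\partial_{y_j}-y_j\partial_{\eta_j}$ formally skew-adjoint, to obtain
$$\cG_\H(\D_Y g)(\dot x,k)=-4|\dot x|\,\cG_\H g(\dot x,k)\qquad\text{and}\qquad\cG_\H(\cT_j g)(\dot x,k)=\sgn(\dot x)\,ik_j\,\cG_\H g(\dot x,k),$$
which yields $(1+|\dot x|+|k|)^N|\cG_\H g(\dot x,k)|\le C_N$ for every $N$, and hence $\cG_\H g\in\cC_0(\wh\H^d_0)$.

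For the convolution identity, $\star$ denotes the usual convolution on $T^\star\R^d\simeq\R^{2d}$, which is the product intertwined by $\cG_\H$ (this is already implicit in the derivation of \eqref{eq:Kd}). Writing $(f\star g)(Y)=\int f(Y_1)g(Y-Y_1)\,dY_1$ and changing variables to $Y=Y_1+Y_2$ gives
$$\cG_\H(f\star g)(\dot x,k)=\int_{(T^\star\R^d)^2}\ov{\cK_d}(\dot x,k,Y_1+Y_2)\,f(Y_1)\,g(Y_2)\,dY_1\,dY_2;$$
inserting the complex conjugate of \eqref{eq:Kd} and factoring the double integral produces \eqref{eq:convGH}. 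The interchange of the sum over $k'$ with the integral is licit for, say, Schwartz $f,g$ (fast decay of $\cK_d$ in its $k$-slot), and the general case follows by approximation; equivalently one recognizes $z\mapsto\sum_k\cG_\H g(\dot x,k)e^{ik\cdot z}$ as the Euclidean Fourier transform of $g$ restricted to a product of circles, so that \eqref{eq:convGH} is just the classical convolution theorem read off from Fourier coefficients.

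For the inversion formula, the plan is to apply the known inversion formula \eqref{MappingofPHdemoeq1} for $\cF_\H$ to $f=g\otimes\chi(\e\cdot)\in\cS(\H^d)$, where $g\in\cS(T^\star\R^d)$ and $\chi$ is as in the statement (so $\int\wh\chi=2\pi$), and to evaluate it at $w=(Y,0)$, where $e^{is\lam}=1$ and $f(Y,0)=g(Y)$. This gives, for every $\e>0$,
$$g(Y)=\frac{2^{d-1}}{\pi^{d+1}}\int_{\wh\H^d}\cW(\wh w,Y)\,\cF_\H(g\otimes\chi(\e\cdot))(\wh w)\,d\wh w=\frac{2^{d-1}}{\pi^{d+1}}\int_{\wh\H^d}\frac1\e\wh\chi\bigl(\tfrac\lam\e\bigr)\cW(\wh w,Y)\,G(\wh w)\,d\wh w,$$
with $G(\wh w)=\int\ov{\cW}(\wh w,Y')g(Y')\,dY'$, exactly as in the proof of Theorem\refer{Fourierhorizontal}. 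Since $\cW(\cdot,Y)$ extends continuously to $\wh\H^d$ with $\cW(\cdot,Y)|_{\wh\H^d_0}=\cK_d(\cdot,\cdot,Y)$ (Proposition\refer{ProofCFL1_Prop1}), the function $G$ extends continuously with $G|_{\wh\H^d_0}=\cG_\H g$; letting $\e\to0$ and applying Lemma\refer{convergesimplecouchH_0} to $\Theta\eqdefa\cW(\cdot,Y)G$ (keeping in mind $\int\wh\chi=2\pi$) then gives
$$g(Y)=\frac{2^{d-1}}{\pi^{d+1}}\cdot2\pi\int_{\wh\H^d_0}\cK_d(\dot x,k,Y)\,\cG_\H g(\dot x,k)\,d\mu_{\wh\H^d_0}(\dot x,k)=\Bigl(\frac2\pi\Bigr)^d\int_{\wh\H^d_0}\cK_d(\dot x,k,Y)\,\cG_\H g(\dot x,k)\,d\mu_{\wh\H^d_0}(\dot x,k).$$
I expect the main difficulty to be the justification of this passage to the limit: $\cW(\cdot,Y)$ is bounded but not compactly supported, so Lemma\refer{convergesimplecouchH_0} applies only after one has checked that $\Theta$ obeys the decay \eqref{eq:condtheta}, i.e. that $G(\wh w)$ decays like $\bigl(1+|\lam|(|n+m|+d)+|n-m|\bigr)^{-2d-1}$ for $g\in\cS$. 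This should be proved along the lines of Lemma\refer{decaylambdan}, using \eqref{DeltaWignerHermite} together with the analogous identity relating $|\lam|(n_j-m_j)\cW$ to $\cT_j\cW$, now applied to $\cW(\wh w,Y')$ as a function of the integration variable $Y'$ and integrated by parts against $g$.

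Finally, once the inversion formula is in hand the Fourier--Plancherel identity follows by transposition: for $g\in\cS(T^\star\R^d)$,
$$\|g\|_{L^2}^2=\int_{T^\star\R^d}\ov{g(Y)}\,g(Y)\,dY=\Bigl(\frac2\pi\Bigr)^d\int_{T^\star\R^d}\ov{g(Y)}\int_{\wh\H^d_0}\cK_d(\dot x,k,Y)\,\cG_\H g(\dot x,k)\,d\mu_{\wh\H^d_0}\,dY,$$
and, after Fubini (legitimate thanks to the Schwartz decay of $g$ and the decay of $\cG_\H g$ obtained above) and the observation that $\int_{T^\star\R^d}\cK_d(\dot x,k,Y)\ov{g(Y)}\,dY=\ov{\cG_\H g(\dot x,k)}$, this equals $(2/\pi)^d\|\cG_\H g\|_{L^2(\wh\H^d_0)}^2$.
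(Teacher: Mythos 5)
Your proposal is correct and takes essentially the same route as the paper: the bound $|\cK_d|\leq1$ together with integrations by parts against \eqref{eq:KLap} and \eqref{eq:Kk} for the mapping into $\cC_0(\wh\H^d_0)$, Identity \eqref{eq:Kd} plus Fubini for the convolution formula, the inversion formula \eqref{MappingofPHdemoeq1} applied to $g\otimes\chi(\e\cdot)$ combined with Lemma~\ref{convergesimplecouchH_0} and Proposition~\ref{ProofCFL1_Prop1} for the inversion formula, and transposition for the Plancherel identity. Your only (harmless) deviations are evaluating the inversion formula at $s=0$ rather than keeping the factor $\chi(\e s)$, and deriving the decay of $G(\wh w)=\int\ov\cW(\wh w,Y')g(Y')\,dY'$ by integrating the identities \eqref{DeltaWignerHermite} and \eqref{Fourierhorizontaldemoeq112} by parts in $Y'$ directly, whereas the paper passes through the relation \eqref{use1} for $(-\D_\H)^p(g\otimes\chi(\e\cdot))$ and lets $\e\to0$ --- two versions of the same computation.
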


\begin{proof}
The first property stems from the fact that, because~$|\cK_d|\leq1,$ we have
$$
 \|\cG_\H g\|_{L^\infty(\wh\H_0^d)} \leq  \|g\|_{L^1(T^\star\R^d)}.
 $$
  Furthermore, as  the kernel $\cK_d$ is continuous with respect to~$(\dot x,k),$ 
  we get from   the explicit expression of $\cG_\H$ that the range of $L^1(T^\star\R^d)$ by $\cG_\H$ 
  is included in the set of continuous functions  on~$\wh\H^d_0.$ 
  
  Proving  that $(\cG_\H g)(\dot x,k )$ tends  to $0$ when $(\dot x,k )$ goes to infinity
  is based on the regularity and decay properties of the kernel~$\cK_d.$ More specifically, 
  Identity\refeq {eq:KLap} implies that 
$$
\forall p \in \N\,,\  4^p|\dot x| ^p\cK_d(\dot x,k,Y) = \bigl((-\D_{Y} )^p \cK_d\bigr)(\dot x,k,Y),
$$
while Relation\refeq  {eq:Kk} gives for all multi-index $\alpha$ in $\N^d,$
$$
(ik \sgn \dot x)^\al \cK_d (\dot x,k,Y) = ( \cT^\al \cK_d) (\dot x,k,Y)\bigr)
\with  \cT^\al \eqdefa \prod_{j=1}^d ( \eta_j\partial_{y_j} -y_j\partial_{\eta_j}
)^{\al_j} . 
$$
Hence, if $g\in\cS(T^\star\R^d)$ then performing suitable integration by parts
in the integral defining~$\cG_\H g$ yields
$$
4^p|\dot x|^p (\cG_\H g)(\dot x,k )=   \cG_\H ((-\D_Y)^pg)(\dot x,k) \andf 
 (-i  k \sgn \dot x )^\al (\cG_\H g)(\dot x,k )=  (\cG_\H  \cT^\al g)(\dot x,k).
$$
This implies that, for any positive integer~$p$, a constant~$C_p$ and an integer~$N_p$ exist such that 
\ben
\label {decaydotx1}
( 1+|\dot x|+|k|) ^p|\cG_\H(g) (\dot x,k)| \leq C_p \, \|g\|_{N_p,\cS(T^\star\R^d)}.
\een
This proves that $(\cG_\H g)(\dot x,k )$ tends to~$0$ when~$(\dot x,k )$ goes to infinity
for any $g$ in~$\cS(T^\star\R^d).$ Now, because~$L^1(T^\star\R^d)$ is 
dense in~$\cS(T^\star\R^d)$ and~$\cG_\H$ is continuous from~$L^1(T^\star\R^d)$ to
the set~$\cC_b(\wh\H^d_0)$ of bounded continuous functions on $\wh\H^d_0$, 
one can conclude that the range of~$L^1(T^\star \R^d)$ by~$\cG_\H$ is included in~$\cC_0(\wh \H^d_0)$.
    \medbreak
     In order to establish\refeq{eq:convGH}, 
     it suffices to see that, by virtue of  the definition of $\cG_\H,$ 
     of Identity\refeq{eq:Kd} and of Fubini theorem (here
  the decay inequality\refeq{eq:fastdecayK} comes into play), one may write that for any couple $(f,g)$ of integrable functions on $T^\star\R^d,$ we have
  $$
  \begin{aligned}
  \cG_\H(f\star g)(\dot x,k)&=\int_{(T^\star\R^d)^2}\ov\cK_d(\dot x,k,Y)\,f(Y-Y') \, g(Y')\,dYdY'\\
  &=\sum_{k'\in\Z^d}\int_{(T^\star\R^d)^2}\ov\cK_d(\dot x,k',Y') g(Y')\:\ov\cK_d(\dot x,k-k',Y-Y')
  f(Y-Y')\,dYdY'.
  \end{aligned}
  $$
    Then performing an obvious change of variable, and using again Fubini  theorem and the definition of $\cG_\H$ gives\refeq{eq:convGH}. 
    \medbreak
  In order to prove the \emph{inversion Fourier formula for $\cG_\H$}, let us consider~$g$ in~$\cS(T^\star\R^d)$ and~$\chi$ in~$\cS(\R)$ with value~$1$ near~$0$. For any sequence~$\suite \e p \N$ of positive real numbers which tends to~$0$, we have according to the inverse Fourier formula~\eqref{MappingofPHdemoeq1},
 \beno
g(Y) \chi(\e_p s)  & =  & \frac {2^{d-1}}  {\pi^{d+1} }   \int_{\wt \H^d} 
e^{is\lam} \cW(\wh w, Y)\cF_\H (g\otimes \chi(\e_p\cdot)) (\wh w) \, d\wh w\\
& = &  \frac {2^{d-1}}  {\pi^{d+1} }   \int_{\wt \H^d} e^{is\lam} \cW(\wh w,  Y)
\Bigl( \int_{T^\star \R^d} \ov\cW (\wh w,Y') g(Y') dY'\Bigr) \frac 1 {\e_p} \wh \chi \Bigl(\frac \lam {\e_p}\Bigr) \, d\wh w.
\eeno
{}From the definition of $\D_\H$ in\refeq{defLaplace}, we gather that for any integer $p$ and positive real number~$\e$, there exist some  function
$f_\e^p$ on $\H^d,$ and constant  $C_p$ (depending only on  $p$) so that
\begin{equation}
\label {use1}
(-\D_\H)^p \chi(\e s) g(Y)=  \chi(\e s) (-\D_Y)^p g(Y) + \e f^p_\e(Y,s)\with  \|f^p_\e(\cdot,s)\|_{L^1(T^\star\R^d)}\leq C_p.
\end{equation} 
Therefore, having $\ep$ tend to $0,$ we  deduce that  
$$
 |\lam|^p (2|m|+d) ^p \Big| \int_{T^\star \R^d} \ov\cW (\wh w,Y) g(Y) \,dY \Big| \leq C_p    \int_{T^\star \R^d} \Big|(-\D_Y)^pg(Y)\Big| dY \, .
 $$
Along the same lines, taking advantage of 
the \emph{right-invariant} vector fields defined in\refeq{eq:rightinv}, we  get for any integer $p$ 
$$ |\lam|^p (2|n|+d) ^p \Big| \int_{T^\star \R^d} 
\ov\cW (\wh w,Y) g(Y) dY \Big| \leq C_p    \int_{T^\star \R^d} \Big|(-\D_Y)^pg(Y)\Big| dY \, .$$
Identity\refeq {Fourierhorizontaldemoeq112}  together with integrations by parts implies that  for  any multi-index $\al$
$$ 
\bigl(-i\,\sgn\lam\bigr)^{|\al|} \prod_{j=1}^d (n_j-m_j)^{\al_j}   \int_{T^\star \R^d} \ov\cW (\wh w,Y) g(Y) \,dY =  \int_{T^\star \R^d}  \ov\cW (\wh w,Y) \cT^\al g(Y) \,dY \, .
$$
We deduce that the function
$$
\wh w\longmapsto \cW(\wh w, Y)
\Bigl( \int_{T^\star \R^d} \ov\cW (\wh w,Y') g(Y')\, dY'\Bigr)
$$
satisfies the hypothesis of Lemma\refer {convergesimplecouchH_0}.
Thus combining with  Proposition\refer  {ProofCFL1_Prop1}   gives
$$\begin{aligned}
g(Y) & =   \frac {2^{d-1}}  {\pi^{d+1} }  \:2\pi 
 \int_{\wh \H_0^d}   \cK_d(\dot x,k,Y)
\Bigl( \int_{T^\star \R^d} \ov\cK_d (\dot x,k,Y') g(Y') dY'\Bigr)  d\mu_{\wh \H_0^d}(\dot x,k) \\
& =  \Bigl(\frac2\pi\Bigr)^{d}  \int_{\wh \H_0^d}   \cK_d(\dot x,k, Y)
\,\cG_\H g (\dot x, k) \, d\mu_{\wh \H_0^d}(\dot x,k),
\end{aligned}
$$
which completes the proof of the inversion formula.
\medbreak
Of course, as in the classical Fourier theory, having  an inversion formula implies a Fourier-Plancherel type relation. Indeed we have
 for any function~$g$ in~$\cS(T^\star\R^d),$  using Fubini theorem,
\beno
\int_{T^\star\R^d} g(Y)\overline g(Y) \,dY & =   &  \Bigl(\frac2\pi\Bigr)^{d} \int_{T^\star\R^d} \biggl( \int_{\wh \H_0^d}   \cK_d(\dot x,k, Y)
\,\cG_\H g (\dot x, k) \, d\mu_{\wh \H_0^d}(\dot x,k)\biggr) \overline g(Y) \,dY\\
& =   &  \Bigl(\frac2\pi\Bigr)^{d}  \int_{\wh \H_0^d}  \cG_\H g (\dot x, k)\overline {\biggl(  \int_{T^\star\R^d}  \ov\cK_d(\dot x,k,Y)
g(Y) \,dY\biggr) } \, d\mu_{\wh \H_0^d} (\dot x,k)\\
& = &  \Bigl(\frac2\pi\Bigr)^{d}  \int_{\wh \H_0^d}  \cG_\H g (\dot x, k)\,\overline {\cG_\H g (\dot x, k) } \, d\mu_{\wh \H_0^d} (\dot x,k).
\eeno
The whole Theorem\refer {FourierhorizontalMore} is proved.
\end{proof}


\appendix 
\section{Useful tools and results}
\label {FourierHbasic}


Let us first recall standard properties of Hermite functions 
that have been used repeatedly in the paper, when establishing identities pertaining 
to the function $\cW.$ 

In addition to the creation operator $C_j \eqdefa -\partial_j+M_j$ already defined in the introduction, 
 it is convenient to introduce the following  \emph{annihilation} operator: 
\beq
\label {definCreaAnnhil}
A_j \eqdefa \partial_j+M_j.
\eeq
It is very classical (see e.g. \cite{huet,O}) that 
\beq
\label {relationsHHermiteCA}
A_j H_{n} = \sqrt {2n_j}\,  H_{n-\d_j} \andf C_j H_n = \sqrt {2n_j+2}\, H_{n+\d_j},
\eeq
As Relations\refeq {definCreaAnnhil} imply that
\begin{equation}\label{Mjdj}
2M_j =C_j+A_j \andf 2\partial_j =A_j-C_j,
\end{equation}
 we discover that
\beq
\label {relationsHHermiteCAb}
\begin{array} {rcl}
M_j H_{n}  &= & \ds  \frac 12 \bigl(  \sqrt {2n_j} \,H_{n-\d_j}+\sqrt {2n_j+2}\,  H_{n+\d_j}\bigr) \andf \\ &&\\
\partial_j H_n &=  & \ds \frac 12\bigl (\sqrt {2n_j}\, H_{n-\d_j}- \sqrt {2n_j+2}\, H_{n+\d_j}\bigr).
\end{array}
\eeq
Note also  that 
\beq
\label {relationsCHA}
C_j A_j +\Id =-\partial_j^2+M_j^2 \andf [C_j,A_j] = -2\Id,
\eeq
and thus 
$$
\Delta_{\rm osc}^1=\sum_{j=1}^d C_jA_j + d\Id.
$$
Finally, we have 
\beq
\label {relationsCommuteCH}
[ -\partial_j^2+M_j^2, C_j] = 2C_j. 
\eeq


Let us next  prove Relation\refeq  {LaplacianHWignerj} and Lemma\refer {decaylambdan}. To this end, we write that by definition of $\cX_j$ and of $\cW,$ we have
\beno
\cX_j \bigl(e^{is\lam} \cW (\wh w,  Y) \bigr) & = & \int_{\R^d}  \cX_j \bigl(e^{is\lam +2i\lam \langle \eta,z\rangle} H_{n,\lam}(y+z)
H_{m,\lam} (-y+z)\bigr) dz\\
& = & \int_{\R^d}  e^{is\lam +2i\lam \langle \eta,z\rangle }
\bigl(2i \lam \eta_j +\partial_{y_j}\bigr) \bigl( H_{n,\lam}(y+z)
H_{m,\lam} (-y+z)\bigr) dz.
\eeno
As~$2i \lam \eta_j e^{2i\lam \langle \eta,z\rangle } = \partial_{z_j}(e^{2i\lam \langle \eta,z\rangle })$, an integration by parts implies that 
\beq
\label {MappingofPHdemoeq8}
\cX_j \bigl(e^{is\lam} \cW (\wh w, Y) \bigr) =  \int_{\R^d}  e^{is\lam + 2i\lam \langle \eta,z\rangle }
\bigl( \partial_{y_j}-\partial_{z_j}\bigr) \bigl( H_{n,\lam}(y+z)
H_{m,\lam} (-y+z)\bigr) dz.
\eeq
The action of~$\Xi_j$ is simply described by
\beno
\Xi_j \bigl(e^{is\lam} \cW (\wh w,Y) \bigr)  & = & \int_{\R^d}  \Xi_j \bigl(e^{is\lam + 2i\lam \langle \eta,z\rangle}\bigr) H_{n,\lam}(y+z)
H_{m,\lam} (-y+z)\, dz\\
& = & \int_{\R^d}  e^{is\lam + 2i\lam \langle \eta,z\rangle}
2i\lam(z_j -y_j)  H_{n,\lam}(y+z)
H_{m,\lam} (-y+z)\, dz.
\eeno
Together with\refeq  {MappingofPHdemoeq8}, this gives
$$
(\cX_j^2+\Xi_j^2)  \bigl(e^{is\lam} \cW(\wh w,Y)\bigr)  =4\int _{\R^d}  e^{is\lam + 2i\lam \langle \eta,z\rangle} 
H_{n,\lam}(y+z)
\bigl(( -\partial_j^2+\lam^2M_j^2\bigr)  H_{m,\lam}) (-y+z)\, dz.
$$
Putting together  with\refeq {relationsHHermiteD},   we get Formula\refeq  {LaplacianHWignerj} and thus\refeq {DeltaWignerHermite}.
\medbreak
Now, if $f$ belongs to the Schwartz space, then combining  \refeq {DeltaWignerHermite} with integrations by parts 
yields\refeq{FourierdiagDeltaHfond}. Indeed, we have
$$
\begin{aligned}
-4|\lam|(2|m|+d) \wh f_\H (n,m,\lam) & = -4|\lam|(2|m|+d)\int_{\H^d} e^{-is\lam} \ov\cW(\wh w,Y) f(Y,s) \,ds \\
  & =   \int_{\H^d}  \Delta_\H \bigl (e^{-is\lam} \ov\cW(\wh w,Y) \bigr) f(Y,s) \,ds \\
& =  (\cF_\H\D_\H f) (n,m,\lam).
\end{aligned}$$
This gives, after iteration
\beq
\label {decaylambdandemoeq1}
4^p|\lam|^p(2|m|+d)^p \bigl| \wh f_\H (n,m,\lam) \bigr | \leq \|\D_\H^p f\|_{L^1(\H^d)},
\eeq
which is one part of the decay inequality\refeq{eq:decay}.
\medbreak
To complete the proof of  Lemma\refer  {decaylambdan}, 
it suffices to exhibit suitable decay properties with respect to~$k$. 
To this end, we introduce the \emph{right-invariant}  vector fields
 $\wt\cX_j$ and $\wt\Xi_j$  defined  by 
 \begin{equation}\label{eq:rightinv}
 \wt\cX_j\eqdefa\partial_{y_j}-2\eta_j\partial_s\andf 
 \wt\Xi_j\eqdefa\partial_{\eta_j}+2y_j\partial_s\with j\in \{1,\cdots,d\}\,.
 \end{equation}
   Then arguing as above, we readily get
  $$
  \longformule{
  4|\lam| (2n_j+1) e^{is\lam} \cW(\wh w,Y) = -e^{is\lam} ( \partial_{y_j}^2+ \partial_{\eta_j}^2) \cW(\wh w,Y)
}
{ {}
+4i\lam e^{is\lam} (\eta_j \partial_{y_j} -y_j\partial_{\eta_j } )\cW(\wh w,Y)
-4  e^{is\lam} \lam^2 (y_j^2+\eta_j^2)\cW(\wh w,Y).
}
$$
As 
$$
 -\wt \cX_j^2+\cX_j^2 -\wt \Xi_j^2+\Xi_j^2= 8\partial_s\cT_j \, ,
 $$
 we get by difference with\refeq  {LaplacianHWignerj}, 
\beq
\label  {Fourierhorizontaldemoeq112}
|\lam| (n_j-m_j) e^{is\lam} \cW(\wh w,Y) =  
\partial_s \cT_j \bigl( e^{is\lam} \cW\bigr)  (\wh w,Y)= i \lam e^{is\lam} \cT_j \cW  (\wh w,Y).
\eeq
 After an obvious iteration this gives
\beq
\label  {Fourierhorizontaldemoeq1122}
\prod_{j=1}^d (n_j-m_j)^{\al_j}   \cW(\wh w,Y)=  ( i\,{\rm sgn } (\lam))^{|\al|}  
\cT^\al \cW(\wh w,Y),
\eeq
whence, mimicking the proof of\refeq{decaylambdandemoeq1}, 
\beq
\label {decaylambdandemoeq2}
|n-m|^p |\wh f_\H (n,m,\lam)| \leq  \sup_{|\al|=p} \|\cT^\al f\|_{L^1(\H^d)}\quad\hbox{for all }\ f\ \hbox{ in }\ \cS(\H^d).
\eeq
This completes the proof  of Lemma\refer {decaylambdan}. 
\medbreak
Let us finally prove the convolution identity\refeq{newFourierconvoleq1}.
It is just based on the fact that for all $(n,m,\lambda)$ in $\wt\H^d$ and any integrable function $f$ on $\H^d,$ we have
$$
 \sum_{m\in \N^d} |\wh f_\H(n,m,\lam)|^2\leq \|f\|^2_{L^1(\H^d)} \andf
 \sum_{n\in \N^d} |\wh f_\H(n,m,\lam)|^2\leq \|f\|^2_{L^1(\H^d)}.
$$
Indeed, 
 if~$A$ and~$B$ are two bounded operators on a separable Hilbert space~$\cH$ endowed with 
 an orthonormal basis~$\suite e n {\N^d}$ then, denoting 
 $$A(n,m) \eqdefa (Ae_m|e_n) \andf B(n,m) \eqdefa (Be_m|e_n),$$
one may write 
\beq
\label {newFourierconvoldemoeq1}
\sum_{\ell \in \N^d} |A (\ell, m)|^2=\|Ae_m\|_{\cH}^2\leq \|A\|_{\cL(\cH)}^2 \quad \hbox {and} \quad 
\sum_{\ell \in \N^d} |A(n,\ell)|^2=\|A^*e_n\|_{\cH}^2\leq \|A\|_{\cL(\cH)}^2.
\eeq
 Therefore, from  Inequality\refeq {L1LinftyFourierbasic}
 and Definition\refer{definFouriercoeffH},
  we readily infer  that 
$$\begin{aligned}
\bigl(\cF^\H(f)(\lam)\circ \cF^\H(g) (\lam) H_{m,\lam} \bigl| H_{n,\lam} \bigr)_{L^2(\R^d)}
& = \!\!\lim_{(N,N')\rightarrow(\infty,\infty)}\!\!\sum_{\substack {|\ell|\leq N\\ |\ell'|\leq N'}}\!\!
\wh f_\H(\ell',\ell,\lam)\wh g_\H(\ell,m,\lam) (H_{\ell',\lam}|H_{n,\lam})\\
& =  \!\lim_{N\rightarrow \infty}\sum_{|\ell|\leq N} \wh f_\H(n,\ell,\lam)\,\wh g_\H(\ell,m,\lam)
\\ & = \sum_{\ell\in\N^d} \wh f_\H(n,\ell,\lam)\,\wh g_\H(\ell,m,\lam).
\end{aligned}$$
Then, remembering Relation\refeq{FourierConvol} completes the proof of\refeq{newFourierconvoleq1}.

\end{document}